\newcommand*\samethanks[1][\value{footnote}]{\footnotemark[#1]}
  \newcommand{\M}{{\mathcal M}}
\newcommand{\br}{\mathbb{R}}
\newcommand{\bz}{\mathbb{Z}}
\newcommand{\bn}{\mathbb{N}}
\newcommand{\bt}{\mathbb{T}}
\newcommand{\om}{\omega}
\newcommand{\abs}[1]{\left\lvert #1\right\rvert}
\newcommand{\norm}[1]{\left|\left| #1 \right|\right|}
\newcommand{\jp}[1]{\left\langle  #1\right\rangle }
\newtheorem{thm}{Theorem}[section]
\newtheorem{lem}[thm]{Lemma}
\newtheorem{prop}[thm]{Proposition}
\newtheorem{rem}[thm]{Remark}
\numberwithin{equation}{section}
\newcommand\reallywidehat[1]{%
\savestack{\tmpbox}{\stretchto{%
  \scaleto{%
    \scalerel*[\widthof{\ensuremath{#1}}]{\kern-.6pt\bigwedge\kern-.6pt}%
    {\rule[-\textheight/2]{1ex}{\textheight}}
  }{\textheight}%
}{0.5ex}}%
\stackon[1pt]{#1}{\tmpbox}%
}
\title{Scattering problem for Vlasov-type equations on the $d$-dimensional  torus with Gevrey data}
\author{
Dario Benedetto
  \thanks{Università di Roma ‘La Sapienza’, 
  Dipartimento di Matematica, Piazzale A. Moro 2, 00185 Roma, Italy.
   \\ Email: \textsf{  benedetto@mat.uniroma1.it},
  \textsf{caglioti@mat.uniroma1.it}}
  \and
 Emanuele Caglioti
  \samethanks
  \and
Antoine Gagnebin
  \thanks{ETH Z\"urich, Department of Mathematics, R\"amistrasse 101, 8092 Z\"urich, Switzerland. \\Email: \textsf{antoine.gagnebin@math.ethz.ch},
  \textsf{mikaela.iacobelli@math.ethz.ch}}
  \and
Mikaela Iacobelli
  \samethanks
\qquad \qquad
Stefano Rossi
  \thanks{University of Z\"urich,
  Institute of Mathematics
  Winterthurerstrasse 190
   8057 Z\"urich, Switzerland.
  \\Email: \textsf{stefano.rossi@math.uzh.ch}}
  \and
}
\date{}
\begin{document}

\maketitle

\begin{abstract}
    
In this article, we consider Vlasov-type equations describing the evolution of single-species type plasmas, such as those composed of electrons (Vlasov-Poisson) or ions (screened Vlasov-Poisson/Vlasov-Poisson with massless electrons). We solve the final data problem on the torus $\mathbb{T}^d$, $d \geq 1$, by considering asymptotic states of regularity Gevrey-$\frac{1}{\gamma}$ with $\gamma>\frac13$, small perturbations of homogeneous equilibria satisfying the Penrose stability condition. This extends to the Gevrey perturbative case, and to higher dimension, the scattering result in analytic regularity obtained by E. Caglioti and C. Maffei in \cite{EC_CM}, and answers an open question raised by J. Bedrossian in \cite{JB_notes}.

\end{abstract}

\section{Introduction}
\label{section Introduction}

In this paper, we investigate the asymptotic behaviour of systems modelling dilute,  collisionless, non-relativistic plasmas in a periodic domain.

In these systems, negatively charged electrons and positively charged ions have significantly different masses - the mass ratio being of order $10^3$, see e.g. \cite{plasma_2003}. This dissimilarity leads to a separation in the relevant timescales of evolution for the different species. Consequently, it is reasonable to describe these species' evolution separately and use different models for their dynamics.  We englobe such models in a unique framework given by a general Vlasov-type equation: 
\begin{eqnarray}
\label{VP}
  \left \{
  \begin{array}{l}
  \partial_t f(t,x,v) + v \cdot \nabla_x f(t,x,v) - E_f(t,x)\cdot \nabla_v f(t,x,v)= 0, \\
  E_f(t,x) = - \nabla V_f(t,x), \qquad -\Delta V_f(t,x) + \beta V_f(t,x) + h(V_f)(t,x) =\int_{\mathbb{R}^d} f(t,x,v) dv - 1.
  \end{array}
  \right.
\end{eqnarray}
Here, we focus on the periodic setting $x \in \bt^d:= \br^d / (2 \pi \bz)^d$. The unknown $f(t,x,v)$ is the distribution function of particles of a given species 
at time $t$, position $x$, and velocity $v$, where $(t,x,v) \in \mathbb{R} \times \mathbb{T}^d\times \mathbb{R}^d$. We denote by $E_f$ the mean-field force generated by the distribution of particles. 

\vspace{0.5cm}     

We will consider different choices of parameters $\beta\ge 0$ and analytic functions $h$.
In this way, one could study several models for plasma physics systems.
For example, the main kinetic equation used to model plasmas is the so-called  Vlasov-Poisson (VP) system, which corresponds to the case $\beta=h=0$.  
It is a well-known model in kinetic theory, which
describes the motion of electrons (i.e.,  $f (t, x, v)$ is the distribution function for the electrons) in plasmas when we neglect collisions and magnetic effects and consider the ions as a stationary background.

The (VP) system has been extensively studied over the past decades, and a vast literature of results is already available on the existence of global classical and weak solutions with various conditions on the initial data.  For global well-posedness of classical solutions on the whole space,  we refer to the work of S. V. Iordanskii \cite{Iordanskii} in dimension one, S. Ukai and T. Okabe \cite{Ukai_Okabe} in dimension two, and P.-L. Lions and B. Perthame \cite{Lions_Perthame}, K. Pfaffelmoser \cite{Pfaffelmoser} in the three-dimensional case, (see also \cite{Bardos_Degond_2, Schaffer}).  For weak solutions on the whole space, we refer to the work of A. A. Arsenev \cite{Arsenev}, (see also \cite{Bardos_Degond_1, Bardos_Degond_Golse, Horst_Hunze}), while J. Batt and G. Rein \cite{Batt_Rein} proved the global well-posedness on the torus  (see also \cite{Chen, Pallard}).  Finally, Loeper \cite{Loeper} made a significant improvement to the uniqueness theory  (see also \cite{Miot}, \cite{HM_18}, \cite{MI_2022}.

While the (VP) system describes the evolution of electrons, on the ion timescale electrons move faster than ions, and it becomes relevant to consider electron-electron collisions in the model.
Thus, it is natural to assume that the electron distribution is already in thermal equilibrium on the ion timescale,
and the electron density is given by a Maxwell-Boltzmann distribution.  
The resulting model under these assumptions is the Vlasov-Poisson system for massless electrons (VPME), sometimes called ionic Vlasov-Poisson. 
This model can be obtained rigorously as the mass ratio between ions and electrons becomes small. We refer to \cite{Bardos} for a mathematical proof of the above result and to \cite{Griffin_Iacobelli_summary} for a more thorough introduction to this model. 
The (VPME) system consists of a Vlasov equation coupled to a nonlinear Poisson equation that describes how the fluctuation between the ion distribution and the Maxwell-Boltzmann electron distribution generates the electric potential. 
In our notation, (VPME) can be obtained by setting $\beta = 1$ and $h(U) = e^U - 1 - U$.

The (VPME) system has received less mathematical attention than the (VP) system due to the additional difficulty in the nonlinear Poisson coupling.  
F. Bouchut \cite{Bouchut} was the first to construct global weak solutions in the whole space in dimension three. In the one-dimensional setting, weak solutions were constructed globally in time by D. Han-Kwan and M. Iacobelli \cite{Han-Kwan_Iacobelli}.  M. Griffin-Pickering and M. Iacobelli have obtained the first result of global well-posedness in the case of the whole space in dimension three \cite{Griffin_Iacobelli_R} and of the torus in dimension two and three \cite{Griffin_Iacobelli_torus}.  Moreover, L. Cesbron and M. Iacobelli showed the well-posedness of (VPME) in bounded domains \cite{Cesbron_Iacobelli}. We refer to \cite{Griffin_Iacobelli_summary} for an exhaustive review of the global well-posedness theory of (VPME).

In the setting of our system \eqref{VP}, another important example of plasma physics model is the screened (VP) system.  
This model can be obtained by choosing $\beta = 1$, and $h(U) = 0$ and corresponds to a first-order approximation of the exponential term $e^U$ by $1+U$ in the Poisson coupling of the (VPME) system. Physically speaking, it can be showed that this approximation holds true when the electric energy of the electrons is much smaller than the kinetic energy. 
The weak global existence of this system is explored in \cite[Theorem 2.1]{HKD}. Readers can refer to \cite[ Section 1.1 and 1.2]{HKD_HDR} for a more in-depth understanding of this system. 

The analysis we conduct here is focused on the study of the system \eqref{VP} in a general framework that covers the three plasma models mentioned earlier. To be more specific, we assume $\beta \geq 0$, and we consider $h: (-R,R) \rightarrow \mathbb{R}$ to be analytic, with an analyticity radius denoted by $R$, and it satisfies the condition $h(x) = O(x^2)$ as $x$ approaches zero. It is worth noting that in the (VP),  (VPME), or screened (VP) system, we have $R = \infty$.

\vspace{0.5cm}

In this work, we are interested in the long-time behaviour of solutions of Vlasov-type equations on the periodic domain $\mathbb{T}^d$.
Already in $1946$,  L. Landau in \cite{Landau} observed that
the Vlasov-Poisson equation (i.e.,  \eqref{VP} with $\beta = 0$, and $h=0$) linearized around a Maxwellian equilibrium is exactly solvable and that, for analytic initial data, 
the electric field decays exponentially fast to zero, so the flow governed by the mean-field force is asymptotically free.
This electric field decay has been experimentally observed in plasmas only eighteen years later by J. H. Malmberg and C. B. Wharton in \cite{MW_64}.

Today, what is now called Landau damping, is a well-known collisionless relaxation phenomenon studied in plasma physics literature, where several works analyse and discuss the pioneering result of L. Landau \cite{Landau}. 
We can mention the work of O. Penrose \cite{Penrose}, who improved the result of L. Landau for general analytic spatially homogeneous equilibria.
Nevertheless, the extension from the linear to the true nonlinear case has proved to be particularly difficult for the mathematical theory. 

The first one-dimensional nonlinear result was the one of E. Caglioti and C. Maffei in \cite{EC_CM}. They solved the final data problem in analytic regularity, proving the existence of solutions for large times in a non-perturbative regime, using a Lagrangian approach and fixed point techniques. Subsequently, H. J. Hwang and J. L. L.  Velázquez \cite{Hw_Ve} gave a proof with less restrictive hypotheses.  
Concerning this approach, see also the work of A. Gagnebin \cite{G_23} who extended the work of E. Caglioti and C. Maffei \cite{EC_CM} to the one-dimensional ions dynamic (VPME) - i.e.,  \eqref{VP} with $\beta = 1$, and $h(U)=e^U - 1 - U$.

The solution to the Landau damping for the Vlasov-Poisson system in arbitrary dimension
was finally solved in $2011$ in the work of C. Mouhot and C. Villani \cite{CM_CV}. 
They treated the perturbative regime around homogeneous Penrose-stable equilibria considering analytic and Gevrey initial data, with a Gevrey index close to one. 
The proof, which uses a Newton scheme 
combining both an Eulerian and a Lagrangian approach, 
was subsequently simplified and extended to Gevrey $\gamma>\frac13$ regular initial data by J. Bedrossian, N. Masmoudi, and C. Mouhot in \cite{BMM_13},
using paraproduct decomposition techniques instead of the Newton scheme (it is worth mentioning that their results also includes the screened (VP) case - i.e.,  \eqref{VP} with $\beta = 1$, and $h=0$). 
Recently, E. Grenier, T. Nguyen, and I. Rodianski 
 further simplified the proof in \cite{GNR}
 using a different functional setting, better characterizing the
 invertibility of the linearized term 
 and simplifying the non-linear analysis.
Concerning the ions dynamics, we mention the recent work of A. Gagnebin and M. Iacobelli \cite{GI_23} where they solve the Cauchy problem, proving Landau damping for the (VPME) system on the torus $\mathbb{T}^d$, also for $\gamma>\frac13$.

J. Bedrossian in \cite{JB} proved that the proof of Mouhot-Villani cannot be extended to high Sobolev spaces in the case of gravitational interactions, by showing inflation of Sobolev norms for solutions that exhibit arbitrarily many isolated plasma echoes.
In addition, Z. Lin and C. Zeng in \cite{LZ_2011} and \cite{LZ_2012} showed the existence of periodic travelling BGK waves within any small neighborhood in $H^{\sigma}$ with $\sigma<\frac{3}{2}$ of a general homogeneous equilibrium, thus proving that Landau damping cannot hold for small Sobolev regularity.
The problem is different in other related equations, such as Vlasov-HMF equation, where this phenomenon is absent (for the Vlasov-HMF case, see the works of E. Faou and F. Rousset in \cite{FR_2016} for the Landau damping in Sobolev spaces and
the recent one of D. Benedetto, E. Caglioti, and S. Rossi \cite{BCR} where the final data and initial data Landau damping problems with analytic regularity are compared).

While in the periodic case on $\mathbb{T}^d$, the phase mixing effect holds, allowing to trade regularity for decay, in the whole space $\mathbb{R}^d$ the problem is radically different. 
In particular, due to the dispersive mechanism the decay of the electric field is only algebraic,
moreover in general the Penrose stability condition does not hold.
In this regard, see the work of A. D. Ionescu, B. Pausader, X. Wang and K. Widmayer in \cite{IPWW_2022} and the one of P. Flynn, Z. Ouyang, B. Pausader and K. Widmayer in \cite{FOPW_2023} for the dynamics around vacuum for (VP) system, the one of B. Pausader and K. Widmayer in \cite{PW_2021} for the stability of a charged particle, and the ones of A. D. Ionescu, B. Pausader, X. Wang and K. Widmayer in \cite{IPWW_2023} and \cite{IPWW_2024} for the stability of the Poisson equilibrium. 
In the case of the screened (VP) system on the whole space, Penrose stability condition holds and this allows one to prove Landau damping even with low Sobolev regularity, 
see the work of J. Bedrossian, N. Masmoudi and C. Mouhot in \cite{BMM_16} and of D. Han-Kwan, T. Nguyen and F. Rousset in \cite{HKNR} for the $d=3$ case, and the works of L. Huang, Q.-H. Nguyen, and Y. Xu in \cite{huang_2d} and \cite{huang_sharp_2022}.

\vspace{0.5cm}

Let us consider solutions of \eqref{VP} of the form 
\begin{equation}
\label{solution_f}
	f(t,x,v) = \mu(v) + r(t,x,v),
\end{equation}
where $r$ is the perturbation and
$\mu $ an analytic homogeneous equilibrium 
satisfying a suitable stability condition (see \eqref{Penrose}).
The goal of this work is to study the final data problem where an asymptotic datum $g_\infty(x,v)$ with Gevrey regularity is given (see (\ref{norm_g_infty}) for more details). 
By inserting \eqref{solution_f} into our system \eqref{VP}, we find that the perturbation $r(t,x,v)$ verifies the equation 
\begin{eqnarray}
\label{nlpvpme}
  \left \{
  \begin{array}{l}
  \partial_t r(t,x,v) + v \cdot \nabla_x r(t,x,v) -q E_r(t,x) \cdot \nabla_v (\mu(v) + r(t,x,v)) = 0, \\
  E_r(t,x)= - \nabla U_r(t,x), \qquad - \Delta U_r(t,x) + \beta U_r(t,x) + h(U_r(t,x))= \int_{\mathbb{R}^d} r(t,x,v) \ dv.
  \end{array}
  \right.
\end{eqnarray}
We prove the existence of perturbative solutions $r(t,x,v)$ such that the electric field asymptotically vanishes and it holds
\begin{equation}
\label{freescatt}
	\|  r(t,x+vt,v) - g_\infty(x, v) \|_{L^{\infty} (\bt^d \times \br^d)} \longrightarrow 0, \qquad \mbox{as } t\rightarrow + \infty,
\end{equation}
so that,
\begin{equation*}
	\|  f(t,x+vt,v) - f_\infty(x,v) \|_{L^{\infty} (\bt^d \times \br^d)} \longrightarrow 0, \qquad \mbox{as } t\rightarrow + \infty.
\end{equation*}
with $f_\infty(x,v):=\mu (v) + g_\infty(x, v)$.
{This answers the question about the scattering map addressed by J. Bedrossian in his review on Landau damping \cite{JB_notes}.}

\vspace{0.5cm}

While completing this work, 
we learned that A. D. Ionescu, B. Pausader, X. Wang, and K. Widmayer were independently working on the solution of the
Landau damping and the final data problem in the
case of the (VP) system with critical regularity $\gamma = \frac{1}{3}$,
thus constructing the scattering map for the problem and obtaining results similar to ours. These results were published in the very recent preprint \cite{IPWW_2024_arxvi}. Compared with them, in our work we do not treat the case $\gamma = \frac{1}{3}$, but we deal with a more general Vlasov-type equation allowing a nonlinearity in the Poisson coupling and therefore including the (VPME) system.

\subsection{Notation}
\label{subsection Notation}

Let $(k,\eta) \in \mathbb{Z}^d\times \mathbb{R}^d$, we denote the Fourier coefficient of a general function $\rho: \mathbb{R}_t \times \mathbb{T}_x^d \to \mathbb{R}^+$ by 
\begin{equation*}
  \widehat\rho_t(k)\equiv\widehat{\rho}(t,k) :=\int_{\mathbb{T}^d} \rho (t,x) e^{-ik \cdot x} \ dx,
\end{equation*}
and we have the usual formula
\begin{equation*}
\label{reconstruction formula 1}
	\rho(t,x) = \frac{1}{(2 \pi)^d}\sum_{k \in \mathbb{Z}^d} \widehat{\rho}_t(k) e^{ik \cdot x}.
\end{equation*}
Analogously, we write the Fourier transform of a general function $f: \mathbb{R}_t \times \mathbb{T}_x^d \times \mathbb{R}_v^d \to \mathbb{R}^+$ by 
\begin{equation*}
  \widehat{f}_{t}(k, \eta)\equiv\widehat{f}(t,k,\eta) := \int_{\mathbb{T}^d \times \mathbb{R}^d} f (t,x,v) e^{-ik \cdot x} e^{-i \eta \cdot v} \ dx \ dv,
\end{equation*}
with the reconstruction formula
\begin{equation*}
\label{reconstruction formula 2}
	f(t,x,v) = \frac{1}{(2 \pi)^{d} } \sum_{k \in \mathbb{Z}^d} \int_{\mathbb{R}^d} \widehat{f}_{t}(k,\eta)  e^{ik \cdot x} e^{i \eta \cdot v} \ d\eta.
\end{equation*}
The Japanese bracket is written as follows: $\langle k, \eta \rangle = \sqrt{1 + \vert k \vert^2 + \vert \eta \vert^2}$. For a function which depends on variables $x$ and $v$, we define the Fourier multiplier operator $A_t(\nabla_x, \nabla_v)$ as:
\begin{align}
\label{mult1}
    \reallywidehat{A_t(\nabla_x, \nabla_v) f} (t,k,\eta)\equiv \widehat{Af}(t,k,\eta) := A_t(k,\eta) \widehat{f} (t,k,\eta),
\end{align}
where 
$    A_t(k,\eta):= e^{\lambda (t) \jp{k, \eta}^\gamma} \jp{k, \eta}^\sigma$,
with Gevrey index $\gamma \in \left( \frac{1}{3} , 1 \right]$, Sobolev correction $\sigma > d + 10$, and Gevrey regularity radius $\lambda(t)$ defined as follows
\begin{align}
\label{def_lambda}
    \lambda(t) := \lambda_\infty  - C \jp{t}^{-\delta},
\end{align}
where $\lambda_\infty>0$ and $\delta \ll 1$, $C>0$ such that $\lambda(0)>0$.

For a function which depends only on the variable $x$, we define the analogous multiplier by setting $\eta = kt$ in \eqref{mult1}:
\begin{align*}
    \reallywidehat{A_t(\nabla_x, t\nabla_x) \rho} (t,k) \equiv \widehat{A\rho} (t,k) := A_t(k,kt) \widehat{\rho} (t,k).
\end{align*}
Moreover, we define the following Fourier multiplier:
\begin{align*}
\label{Fourier_multiplier_B}
    \reallywidehat{B_t(\nabla_x, \nabla_v) f} (t,k,\eta) \equiv \widehat{Bf}(t,k,\eta) := \jp{k,\eta} A_t (k,\eta) \widehat{g}(t,k,\eta).
\end{align*}
To quantify the regularity of the distribution function and the decay of the electric field, we use generalised Gevrey norms for our main unknowns. 
More precisely, we will consider the following norms: given a Gevrey index $ \gamma \in \left(\frac{1}{3}, 1\right]$, $\sigma >  10 + d$, where $d$ is the dimension, and a function 
$\rho: \mathbb{R}_t \times \mathbb{T}_x^d \to \mathbb{R}^+ $, we introduce
\begin{equation*}
\label{GenF}
	\left\Vert  A_t(\nabla_x,t \nabla_x) 
   \rho_t
   \right\Vert_{ L_x^2}^2 = \sum_{k \in \mathbb{Z}^d} e^{2\lambda(t) \langle k, kt \rangle^{\gamma}} \vert \widehat{\rho}_t (k) \vert^2 \langle k, kt \rangle^{2\sigma},
\end{equation*}
and, for $f: \mathbb{R}_t\times\mathbb{T}_x^d \times \mathbb{R}_v^d \to \mathbb{R}^+$, 
$j \in \mathbb{N}^d$ a multi-index and $M>\frac{d}{2}$, 
\begin{equation}
\label{sec1:normg}
	\left\Vert \jp{v}^M B_t(\nabla_x,\nabla_v) 
   f_t
   \right\Vert_{ L_{x,v}^2}^2 \approx \sum_{\vert j \vert \leq M}
   \norm{B_t(k,\eta) \partial^j_\eta \widehat{f}}_{L^2_{k,\eta}}
   =\sum_{\vert j \vert \leq M}
   \sum_{k \in \mathbb{Z}^d} \int_{\mathbb{R}^d} e^{2\lambda(t) \langle k, \eta \rangle^{\gamma} } \vert \partial_{\eta}^j \widehat{f}_t (k,\eta) \vert^2 \langle k, \eta \rangle ^{2 \sigma+2} \ d\eta.
\end{equation}
See computations \eqref{sec3:equi_norm_1} and \eqref{sec3:equi_norm_2} for a justification of the equivalence of the norms. With a little abuse of notation, we also define
\begin{align}
\label{sec1:A_z}
    A_z(k,\eta):=e^{z \langle k, \eta \rangle^{\gamma}}  \langle k, \eta \rangle^{\sigma}.
\end{align}
In this paper, $C$ denotes a generic constant that may change from line to line and $\bz_*^d := \bz^d \setminus \{0\}$.

\subsection{Main result}
Before stating our main Theorem \ref{Main_thm}, we introduce the technical assumptions satisfied by the homogeneous equilibrium $\mu(v):$ 
\begin{itemize}
\item[(H1)] $\mu$ is real analytic and satisfies for $M>\frac{d}{2}$, $\lambda>\lambda_\infty>0$, and for all multi-index $j \in \bn^d$
\begin{equation}
\label{norm_mu}
	\sum_{|j|\le M}
    \norm{
    e^{\lambda \langle \eta \rangle} 
    \vert \partial_\eta^j\widehat{\mu}(\eta)\vert}
    _{L^\infty_\eta}  < \infty,
\end{equation}
where $\lambda_\infty$ is introduced in \eqref{def_lambda};

\item[(H2)] $\mu $ satisfies the following Penrose stability condition: there exists a small positive constant $\kappa_0$ such that
\begin{equation}
\label{Penrose}
  \inf_{k \in \mathbb{Z}_*^d;
  \Re \tau \geq 0 }{\left| 1 + \frac{\vert k \vert ^2}{\beta + \vert k \vert^2} \int_0^{+\infty}  t\widehat{\mu}(kt) e^{-\tau t} \ dt \right|} \geq \kappa_0 > 0,
\end{equation}
where $\tau \in \mathbb{C}$ and $\Re \tau$ is the real part of $\tau$;
\item[(H3)] $\int_{\mathbb{R}^d} \mu (v) \ dv = 1$.
\end{itemize}

\begin{thm}\label{Main_thm}
Let us consider the system (\ref{nlpvpme}) with $\beta \geq 0$ a non-negative constant and $h: \br \rightarrow \br$ an analytic function such that $h(x)= \mathcal{O} (x^2)$ when $x$ goes to zero.
Let $\mu$ be a homogeneous equilibrium that satisfies the hypotheses (H1)-(H3) with $\lambda>0$ as in \eqref{norm_mu} and let $g_\infty$ be a Gevrey function of mean zero such that for $\sigma > 10+d$, $M > \frac{d}{2}$, $b>10$ and $\gamma \in \left(\frac{1}{3} , 1\right)$, 
\begin{equation}
\label{norm_g_infty}
	\left \Vert
   \langle v \rangle^M e^{\lambda \langle \nabla_x, \nabla_v  \rangle^\gamma} 
   \langle \nabla_x,\nabla_v \rangle^{\sigma+b} g_\infty
   \right \Vert_{L_{x,v}^2}  < \varepsilon.
\end{equation}
Then, for sufficiently small $\varepsilon>0$, there exists a unique Gevrey solution $r$ of (\ref{nlpvpme}) such that the electric field $E[r]$ decays exponentially fast to zero as time goes to $+\infty$, in the sense that
\begin{equation*}
	\left\Vert  A_{\bar{\lambda}}(\nabla_x,t \nabla_x) 
    E_r(t)
   \right\Vert_{ L_x^2}^2 \le C e^{-C\jp{t}^\gamma}
\end{equation*}
where $C>0$ is a constant and $\bar{\lambda}< \lambda(0)$ in \eqref{def_lambda}.
Moreover, we have
\begin{equation*}
\label{asymptotic_cond_r}
	\lim_{t \rightarrow + \infty}  \|  r(t,x+vt,v) - g_\infty(x, v) \|_{L^{\infty} (\bt^d \times \br^d)} = 0.
\end{equation*} 
\end{thm}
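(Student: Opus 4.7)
The approach is a Banach fixed-point argument in the scattered (gliding) frame, extending the Lagrangian scheme of \cite{EC_CM} to $d\ge 1$ and Gevrey data, and incorporating the nonlinear Poisson coupling $h$ as in the Cauchy analysis \cite{GI_23}. Setting $g(t,x,v):=r(t,x+vt,v)$, equation \eqref{nlpvpme} rewrites as
\begin{equation*}
\partial_t g(t,x,v) = E_r(t,x+vt)\cdot(\nabla_v - t\nabla_x)g(t,x,v) + E_r(t,x+vt)\cdot\nabla_v\mu(v),
\end{equation*}
with density $\widehat{\rho_r}(t,k) = \widehat g(t,k,kt)$ and field $E_r = -\nabla U_r$ recovered from $\rho_r$ through the nonlinear Poisson equation. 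Imposing the final-data condition $g(t,\cdot)\to g_\infty$ and integrating from $t$ to $+\infty$ yields the integral equation
\begin{equation*}
g(t,x,v) = g_\infty(x,v) - \int_t^{+\infty} E_r(s,x+vs)\cdot\bigl[(\nabla_v - s\nabla_x)g(s,x,v) + \nabla_v\mu(v)\bigr]\,ds,
\end{equation*}
which I would solve by contraction in the ball of radius $O(\varepsilon)$ of the Gevrey space $X$ normed by $\|g\|_X:=\sup_{t\ge 0}\|\langle v\rangle^M B_t(\nabla_x,\nabla_v)g(t)\|_{L^2_{x,v}}$, with the time-dependent regularity radius $\lambda(t)$ of \eqref{def_lambda}, which increases in $t$ toward $\lambda_\infty$ in accordance with the direction of backward integration.

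The analysis splits into a linear density estimate and a nonlinear transport estimate. Projecting the integral equation onto $\eta=kt$ and using $\widehat{U_r}(s,k) = (\beta + |k|^2)^{-1}(\widehat{\rho_r}(s,k) - \widehat{h(U_r)}(s,k))$ produces a Volterra equation for $\widehat{\rho_r}$ whose linearized kernel is $(s-t)\widehat\mu(k(s-t))\,|k|^2/(\beta + |k|^2)$. Hypothesis \eqref{Penrose} is precisely the statement that this kernel is invertible on the half-plane $\{\Re\tau\ge 0\}$; by standard Fourier--Laplace arguments one then obtains a Gevrey bound of the form $|\widehat{\rho_r}(t,k)|\lesssim e^{-\lambda(t)\langle k,kt\rangle^\gamma}$ in terms of the source $\widehat{g_\infty}(k,kt)$ and a quadratic remainder $\mathcal{N}(g)$. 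This is where (H1)--(H3) enter, and it immediately delivers the claimed decay $\|A_{\bar\lambda}(\nabla_x,t\nabla_x)E_r(t)\|_{L^2_x}^2 \lesssim e^{-C\langle t\rangle^\gamma}$.

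The main analytical work lies in bounding the quadratic transport term $\int_t^{+\infty} E_r(s,x+vs)\cdot(\nabla_v - s\nabla_x)g(s,x,v)\,ds$ inside the norm of $X$. After paraproduct decomposition of $E_r\cdot\nabla g$ in the Fourier variables $(k,\eta)$, the dangerous interactions are the \emph{plasma echoes} localized at resonances $\eta\approx ks$; their cumulative loss along the echo chain grows like $\exp(C\langle t\rangle^{1-2\gamma})$, which is strictly dominated by the gain $\exp(C\langle t\rangle^\gamma)$ built into the multiplier $A_t$ precisely when $\gamma>1/3$. In that range, the excess is absorbed by the time-varying loss budget $\lambda_\infty - \lambda(t) = C\langle t\rangle^{-\delta}$ from \eqref{def_lambda}. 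Concretely, I would reuse the paraproduct and commutator machinery of \cite{BMM_13}, as streamlined in \cite{GNR}, together with the moment-for-derivative trade-off provided by the weight $\langle v\rangle^M$ in \eqref{sec1:normg}. The composition $h(U_r)$ is handled by analyticity of $h$ on $(-R,R)$ and smallness of $U_r$: expanding $h(U_r) = \sum_{n\ge 2} a_n U_r^n$, each power is $O(\varepsilon^n)$ in the Gevrey algebra by standard product estimates, so the resulting perturbation of the Volterra kernel is small and the Penrose inversion persists by Neumann series, exactly as in \cite{GI_23}.

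\textbf{Main obstacle.} The hardest step is closing the echo bootstrap backward in time jointly with the nonlinear Poisson inversion: every echo generation must be paid for by a clean slice of the radius budget $[\lambda(0),\lambda_\infty]$ spent starting from $t=+\infty$, while at the same time the derivative loss from $(\nabla_v - s\nabla_x)g$ at each generation must match the gain from convolving with $E_r$, whose Gevrey decay is itself extracted from the very Volterra estimate being closed. Once the fixed point $g\in X$ is produced, the initial datum $r(0,\cdot,\cdot) = g(0,\cdot,\cdot)$ provides the actual solution via $r(t,x,v) = g(t,x-vt,v)$; the asymptotic statement $\|r(t,x+vt,v) - g_\infty\|_{L^\infty}\to 0$ then follows from the integral equation by dominated convergence using the Gevrey decay of $E_r$, and uniqueness of the scattering solution follows from a linearized energy estimate for the difference of two solutions in $X$.
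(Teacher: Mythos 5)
Your plan captures the broad shape of the argument---profile $g(t,x,v)=r(t,x+vt,v)$, a Volterra equation for the density on the ray $\eta=kt$, Penrose inversion via the two-sided Laplace transform, paraproduct and commutator estimates for the transport term, and a fixed point---but two genuine gaps stand in the way of closing it.

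The central one is the iteration norm. You contract in a ball of $X$ normed by $\sup_t\|\jp{v}^M B_t(\nabla_x,\nabla_v)g(t)\|_{L^2_{x,v}}$ alone, and claim a pointwise-in-time Gevrey bound on $\widehat{\rho_r}$ as an output of the Penrose inversion. At $\gamma>1/3$ the echo estimate (Proposition~\ref{sec2:prop_estimate_S}) does \emph{not} deliver pointwise-in-time control: the kernel bounds $\mathcal{I}_1,\dots,\mathcal{I}_4$ in its proof are Schur-type $L^2_t\to L^2_t$ estimates, and the conclusion is a bound on $\|\jp{t}^b A_t(\nabla_x,t\nabla_x)\varrho\|_{L^2_t L^2_x}$. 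The iteration therefore has to run on the combined norm $N=N_1+N_2$ of Section~\ref{iterative scheme}, where the $L^2$-in-time weighted density component $N_2$ is essential both for the echo estimate to close on itself and---via Cauchy--Schwarz with $b>6$---for the time integration of the transport estimate \eqref{sec3:prop1_estimate} in \eqref{sec4:apriori}. Pointwise-in-time decay of the density (hence of $E_r$) is recovered \emph{a posteriori} from the $N_1$ bound on $g$ by Sobolev trace at $\eta=kt$; using it as an input to the contraction, as your plan does, would be circular.

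The second gap is the handling of $h(U)$. Treating $h$ as a small perturbation of the Volterra kernel and restoring the Penrose inversion by a Neumann series conflates two separate inversions and would make the Penrose bound depend on the unknown $U$. The paper instead introduces the modified density $\varrho:=(\beta-\Delta)U$, which satisfies the Volterra equation \eqref{U_hat_1} with the \emph{unperturbed} linear kernel, so that \eqref{Penrose} applies verbatim, while the entire contribution of $h$ is absorbed into the source $\widehat S$ in \eqref{S_hat}; the nonlinear elliptic problem for $U$ is then solved independently by a Gevrey fixed point (Proposition~\ref{sec4:PoissonGevrey}), and its contribution to $S$ is quadratic. You need this modified-density reduction (or an equivalent decoupling) to keep the Penrose inversion and the elliptic nonlinearity from interfering. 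As a minor point of attribution, the scheme here is Eulerian/Fourier in the spirit of \cite{JB_notes,GNR,BMM_13}, not a Lagrangian extension of \cite{EC_CM}, and the fixed point is a GNR-type semi-linearized map \eqref{sec4:linear_system}---transport coefficient from the previous iterate, driving term from the new one---rather than a direct contraction of the nonlinear integral equation you wrote down; the last point is a difference of convenience, but worth knowing when trying to execute the plan.
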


\textbf{Remarks and novelty}
We present here some remarks and discuss the novelties of our approach.
\begin{rem}
    The result in Theorem \ref{Main_thm} gives affirmative answer to the problem posed by J. Bedrossian in the recent review \cite[Section 6]{JB_notes} about the existence and injectivity
    of the wave operator map $\mathcal{W}(g_\infty)=g_0$, which associates the initial data to the asymptotic data.
\end{rem}

\begin{rem}
Combining Theorem \ref{Main_thm} with the Landau damping for the forward problem proved in \cite[Theorem 1.1]{GI_23}, guarantees the construction of the scattering map $\mathcal{S}(g_{-\infty})=g_{+\infty}$ that maps the state at $-\infty$ to the asymptotic state at $+\infty$.
\end{rem}
Let us introduce the new unknown $g(t,x,v):= r(t,x+vt,v)$. Since
\begin{equation*}
    \partial_t g = \partial_t r + v \cdot \nabla_x r, \quad \nabla_x g = \nabla_x r.
\end{equation*}
it follows that the profile $g_t(x,v)$ verifies the equation
\begin{equation}
\label{nlequation_g}
\begin{cases}
    \partial_t g(t,x,v) + E(t,x+vt) \cdot \nabla_v \mu(v) = -  E(t,x+vt) \cdot(\nabla_v - t \nabla_x)g(t,x,v),\\
    E(t,x) = - \nabla U(t,x), \quad \quad - \Delta U(t,x) + \beta U(t,x) + h( U(t,x)) = \int_{\mathbb{R}^d} g(t,x-vt,v) dv.
    \end{cases}
\end{equation}
Moreover, we can rewrite (\ref{freescatt}) as:
\begin{equation}
\label{asymptotic_cond}
	\| g(t,x,v) - g_\infty(x, v) \|_{L^{\infty} (\bt^d \times \br^d)} \longrightarrow 0 \qquad \mbox{as } t\rightarrow + \infty,
\end{equation}
where the asymptotic datum $g_\infty$ will be assumed to be a mean-zero function with Gevrey regularity as in \ref{norm_g_infty}.

Observe that the system \eqref{nlequation_g} is composed of two nonlinear equations, one nonlinear transport-type PDE for $g$ and a nonlinear elliptic equation for $U$. 
We treat the nonlinearity for the Poisson equation by introducing the modified density
\begin{equation}
\label{sec1:screen}
\varrho_t(x):=(\beta- \Delta)U_t(x).
\end{equation}
It holds that $\varrho_t(x)$ satisfies a closed equation, see \eqref{U_hat_1}. We use this fact to study the coupled system given by $(g, \varrho)$, deriving global in time a priori estimates for it.
We will see that assumptions (H1) and (H2) on the homogeneous equilibrium $\mu$ are necessary to invert the linear term in equation \eqref{U_hat_1}.
Moreover, the convolution operator for the linearized term is different from the one in the Cauchy problem and requires the introduction of the \emph{two-sided Laplace transform} instead of the one-sided one.
Regarding the a priori estimates for the nonlinearities, we study the system $(g, \varrho)$ by following
the approach introduced in the recent review by J. Bedrossian in \cite{JB_notes}, recovering
, also in this scattering problem, the Gevrey threshold for $\gamma>\frac13$. 
Technically, the analysis of the final data problem requires solving the equation \eqref{nlequation_g} backwards in time,
from infinity to time zero.
This involves
the use of a regularity parameter in \eqref{def_lambda}, which is increasing in time instead of a decreasing one as in the forward problem.
Moreover, dealing with general Vlasov-type equations, we also need to study further nonlinearity in the elliptic equation, which is treated by means of elliptic PDE techniques. In Proposition \ref{sec4:PoissonGevrey}, we also prove a result of independent interest on the existence of solutions for the elliptic equation in \eqref{nlequation_g} with small source terms in Gevrey spaces.

The structure of the paper is the following:
 in Section \ref{section  Landau damping} we focus on the equation verified by the modified density $\varrho_t(x)$ defined in \eqref{sec1:screen}. After writing down the Volterra equation verified by $\varrho$, in Subsection \ref{sec2:Inversibility of the linear term}  we invert the linear term by using the Penrose stability condition \eqref{Penrose} and we study the decay property of the related resolvent kernel. This allows us to give a priori estimates on $\varrho_t(x)$ by focusing only on the nonlinear term. We do this in Subsection \ref{subsec_nonlin}.
In Section \ref{section_distrg}
we perform the a priori estimates on the distribution function $g$.
Finally, in Section \ref{iterative scheme}, we state a general well-posedness result with Gevrey data for the nonlinear Poisson equation in \eqref{nlequation_g} and construct solutions to the final data problem.

\section{A priori estimates on the modified density \texorpdfstring{$\varrho$}{Lg}}
\label{section  Landau damping}

As mentioned in the Introduction, we consider the system $(g, \varrho)$ where $\varrho$ is the modified density defined as follows $\varrho_t(x):=-\Delta_x U_t(x) + \beta U_t(x)$
so that 
$\int_{\br^d} g(t,x-vt,v)dv=\varrho_t(x) + h(U_t(x))$. 

In this Section, we obtain $L^2$ in time estimates in Gevrey regularity
for the modified density $\varrho$. 
We first write down the equation verified by the Fourier coefficients of $\varrho_t(x)$ to carry out this analysis.

\begin{lem}
Let $\mu$ be an analytic homogeneous equilibrium. 
Let $g$ be the unique solution to the problem (\ref{nlequation_g}) with asymptotic condition (\ref{asymptotic_cond}). Then we have the following equation for $\widehat{\varrho}_t (k)$:
\begin{equation}
\label{U_hat_1}
    \widehat{\varrho_t}(k) 
	  + \frac{\vert k \vert^2}{\beta + \vert k \vert^2} \int_t^{+\infty} \widehat{\varrho_s}(k) (s-t) \widehat{ \mu } (-k (s-t)) \ ds =  \widehat{S}_t(k),\quad k \in \mathbb{Z}^d
\end{equation}
where the source term $\widehat{S}_t(k)$ is given by
\begin{align}
\label{S_hat}
	\widehat{S}_t(k) &:= \widehat{g_\infty}(k,kt) 
    -   \widehat{h(U_t)}(k)
    -   \sum_{\ell \in \mathbb{Z}_*^d}
    \int_t^{+\infty}  (s-t) \frac{k \cdot \ell}{\beta + \vert \ell \vert^2} \widehat{\varrho}_s(\ell)  \widehat{g}_s(k - \ell ,kt  - \ell s) \ ds.
\end{align}

\end{lem}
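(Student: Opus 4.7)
The plan is to Fourier transform the kinetic equation in \eqref{nlequation_g}, integrate in time from $T$ to $+\infty$ using the asymptotic condition \eqref{asymptotic_cond}, and then evaluate at the special frequency $\eta=kT$. The choice $\eta=kT$ is dictated by the change of variables $y=x-vT$ in the source of the Poisson equation: it shows that the Fourier coefficient in $x$ of $\int_{\br^d}g(T,x-vT,v)\,dv$ is exactly $\widehat{g}(T,k,kT)$. Combined with the definition \eqref{sec1:screen} of $\varrho$, the Poisson equation gives the key identity $\widehat{\varrho_T}(k) + \widehat{h(U_T)}(k) = \widehat{g}(T,k,kT)$, so it suffices to derive a closed expression for $\widehat{g}(T,k,kT)$.

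For the right-hand side, I would compute the Fourier transform of each term in the $g$-equation. The structural fact that unlocks the calculation is that $E(t,x+vt)$ is concentrated on the Fourier side on the line $\eta=kt$: its joint Fourier transform in $(x,v)$ is proportional to $\delta(\eta-kt)\widehat{E}(t,k)$. From this, the linear forcing $E(t,x+vt)\cdot\nabla_v\mu(v)$ becomes $\widehat{E}(t,k)\cdot i(\eta-kt)\widehat{\mu}(\eta-kt)$, and the nonlinear term $E(t,x+vt)\cdot(\nabla_v-t\nabla_x)g$ becomes a discrete convolution in $\ell\in\bz^d_*$ in which the combination $\eta-\ell t-t(k-\ell)=\eta-tk$ simplifies independently of $\ell$. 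Substituting $\widehat{E}(t,k)=-ik\,\widehat{\varrho_t}(k)/(\beta+|k|^2)$ re-expresses both contributions in terms of $\widehat{\varrho}$ and $\widehat{g}$.

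Integrating the resulting ODE from $T$ to $+\infty$, the asymptotic condition \eqref{asymptotic_cond}, which in Gevrey regularity ensures the pointwise convergence $\widehat{g}(t,k,\eta)\to\widehat{g_\infty}(k,\eta)$ as $t\to+\infty$, supplies the boundary contribution $\widehat{g_\infty}(k,\eta)$. Evaluating at $\eta=kT$, the linear piece reassembles into the Volterra kernel on the left-hand side of \eqref{U_hat_1} after the cosmetic rewriting $\widehat{\mu}(k(T-s))=\widehat{\mu}(-k(s-T))$, and the nonlinear piece produces exactly the sum appearing in \eqref{S_hat}. Substituting the Poisson relation on the left and moving $\widehat{h(U_T)}(k)$ to the right closes the derivation.

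I do not anticipate a conceptual obstacle: once the Fourier conventions are fixed, the derivation is essentially algebraic, and the algebraic collapse $\eta-\ell t-t(k-\ell)=\eta-tk$ is what allows the whole nonlinearity to be expressed on the single slice $\eta=kT$. The only points requiring care are the justification of the exchange between the limit $t\to+\infty$ and the Fourier transform, which is immediate from \eqref{asymptotic_cond} in Gevrey regularity, and the absolute convergence of the $\ell$-series and of the time integral in \eqref{S_hat}, which are inherited from the Gevrey a priori estimates on $\widehat{\varrho}$ and $\widehat{g}$ to be derived later in the paper.
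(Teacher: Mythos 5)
Your proposal is correct and follows essentially the same route as the paper: take the $(x,v)$-Fourier transform of \eqref{nlequation_g}, integrate from $t$ to $+\infty$ using the asymptotic condition, specialize to the line $\eta=kt$, and close the equation via the Poisson coupling identity $\widehat{g}_t(k,kt)=\widehat{\varrho}_t(k)+\widehat{h(U_t)}(k)$. The algebraic observations you highlight (the Fourier support of $E(t,x+vt)$ on $\eta=kt$, the collapse $\eta-\ell t-t(k-\ell)=\eta-kt$, the substitution $\widehat{E}_t(k)=-ik\widehat{U}_t(k)$) are exactly the ones the paper's proof uses.
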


\begin{proof}

We start by taking the Fourier transform with respect to 
$k \in \mathbb{Z}^d$ and $\eta \in \mathbb{R}^d$
of (\ref{nlequation_g}). 
So using the properties of the Fourier transform we have,
\begin{align*}
	\partial_t \widehat{g}_t(k,\eta) & =
    - \widehat{E}_t (k)\cdot \widehat{\nabla_v \mu} (\eta - kt) 
    - i  \sum_{\ell \in \mathbb{Z}_*^d} (\eta -kt) \cdot \widehat{E}_{t} (\ell) \widehat{g}_t(k - \ell ,\eta  - \ell t) \\
	&= -i (\eta - kt)\cdot \widehat{E}_t (k) \widehat{\mu} (\eta - kt) 
    - i  \sum_{\ell \in \mathbb{Z}_*^d} (\eta -kt) \cdot\widehat{E}_{t} (\ell) \widehat{g}_t(k - \ell ,\eta  - \ell t).
\end{align*}
Since $\widehat{E}_t (k) = -ik \widehat{U}_t (k)$ we get 
\begin{equation}
\label{FT_g}
	\partial_t \widehat{g}_t(k ,\eta) 
  = -[(\eta - kt)\cdot k]  \widehat{U}_t (k) \widehat{\mu} (\eta -kt)
  -  \sum_{\ell \in \mathbb{Z}_*^d} [(\eta -kt)\cdot \ell] \widehat{U}_t(\ell) \widehat{g}_t(k - \ell ,\eta  - \ell t).
\end{equation}
Integrating (\ref{FT_g}) in time, by the asymptotic condition \eqref{asymptotic_cond} we obtain
\begin{equation*}
\label{eqsec2:g}
	\widehat{g}_t(k ,\eta)  
    = \widehat{g}_\infty(k ,\eta) 
    + \int_t^{+\infty}  [(\eta - ks)\cdot k] \widehat{U}_s (k) \widehat{\mu} (\eta - ks) \ ds  
    +   \sum_{\ell \in \mathbb{Z}_*^d}  \int_t^{+\infty}   [(\eta -ks)\cdot \ell] \widehat{U}_s(\ell) \widehat{g}_s(k - \ell ,\eta  -\ell s) \ ds.
\end{equation*}
We now consider the Fourier modes of $g$ with $\eta = kt$.
\begin{align}
\label{rho_hat_1}
\widehat{g}_t(k,kt)=  \widehat{g_\infty}(k,kt)  
- \int_t^{+\infty}  (s-t) \vert k \vert^2  \widehat{U}_s (k) \widehat{\mu} (-k (s-t)) \ ds  
- \sum_{\ell \in \mathbb{Z}_*^d}\int_t^{+\infty} (s-t)k\cdot \ell \widehat{U}_s(\ell) \widehat{g}_s(k - \ell ,\eta  -\ell s) \ ds.
\end{align}
Taking the Fourier transform of the Poisson coupling in (\ref{nlequation_g}), and noting that
\begin{align*}
    \reallywidehat{\int_{\br^d} g(t,x-vt,v)dv}(k) = \widehat{g}_t(k,kt),
\end{align*}
we obtain
\begin{equation}
\label{rho_hat_2}
	 \widehat{g}_t(k,kt) = - \widehat{\Delta_x U_t}(k) +\beta  \widehat{U_t}(k) 
      +  \widehat{h( U)}(k)  
      = \widehat{\varrho}_t(k) + \widehat{h( U)}(k).
\end{equation}
Then, plugging equations (\ref{rho_hat_2}) into (\ref{rho_hat_1}),
we obtain the closed equation in \eqref{U_hat_1}.
\end{proof}
Note that, for any $k \in \mathbb{Z}^d$ and considering a given source term $\widehat{S}_t(k)$, equation \eqref{U_hat_1} is an integral Volterra-type equation for $\widehat{\varrho}_t(k)$. In the following subsection, we focus on how to invert this linear term.
\subsection{Invertibility of the linear term}
\label{sec2:Inversibility of the linear term}

We start by introducing the one-sided and two-sided Laplace transforms.
Let $\phi: \mathbb{R} \to \mathbb{R}$ be of exponential type with parameter $c>0$, \textit{i.e.}, there exists $c>0$ such that $|\phi(t)|\le C e^{-c |t|}$, for a constant $C>0$. Then, the one-sided Laplace transform of $\phi$ is given by
\begin{equation*}
\mathcal{L}[\phi_t](\tau):=\int_0^{+\infty} e^{-\tau t} \phi(t) \, dt, \quad \tau \in \mathbb{C},
\end{equation*}
while the two-sided Laplace transform by
\begin{equation*}
    \mathcal{B}[\phi_t](\tau):=\int_{-\infty}^{+\infty} e^{-\tau t}\phi(t) \, dt,
\end{equation*}
both well-defined for $|\Re{\tau}|<c$.

To invert the linear term of \eqref{U_hat_1}, we state the following Lemma.
\begin{lem}
Let $\psi$ and $\phi$ be two real-valued functions of exponential type with parameter $c>0$. Then for $|\Re{\tau}|<c$, we have
\begin{equation}
\label{Property-Laplace}
	\mathcal{B}\left[\int_{t}^{+\infty} \psi (s) \phi (s-t) \ ds \right](\tau) = \mathcal{B}[\psi_t](\tau) \mathcal{L}[\phi_t]( - \tau).
\end{equation}
\end{lem}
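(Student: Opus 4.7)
The plan is to unfold the two-sided Laplace transform on the left, swap the order of integration by Fubini, and then use a translation change of variable to separate the integrand into a product of a two-sided Laplace transform of $\psi$ and a one-sided Laplace transform of $\phi$ evaluated at $-\tau$.

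More concretely, I would start from
\begin{equation*}
    \mathcal{B}\left[\int_{t}^{+\infty} \psi(s)\phi(s-t)\,ds\right](\tau)
    = \int_{-\infty}^{+\infty} e^{-\tau t} \int_{t}^{+\infty} \psi(s)\phi(s-t)\,ds\,dt,
\end{equation*}
and perform the change of variable $u=s-t$ in the inner integral, which transforms the inner domain into $u\in[0,+\infty)$ and gives the integrand $\psi(t+u)\phi(u)$. The next step is to justify swapping the order of integration. Since $|\psi(t+u)|\le Ce^{-c|t+u|}$ and $|\phi(u)|\le Ce^{-cu}$, and since $|\Re\tau|<c$, one checks that the integrand $e^{-\tau t}\psi(t+u)\phi(u)$ is absolutely integrable on $\mathbb{R}\times[0,+\infty)$; this is the only point where some care is needed, and it is really the heart of why the hypothesis $|\Re\tau|<c$ appears.

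After Fubini, I would apply the shift $t'=t+u$ in the $t$-integral, producing the factor $e^{\tau u}$ in front. The double integral then factorises:
\begin{equation*}
    \int_{0}^{+\infty}\phi(u)e^{\tau u}\,du \, \cdot \int_{-\infty}^{+\infty} e^{-\tau t'}\psi(t')\,dt'
    = \mathcal{L}[\phi_t](-\tau)\,\mathcal{B}[\psi_t](\tau),
\end{equation*}
which is exactly \eqref{Property-Laplace}. The only genuine obstacle is the absolute-integrability check used to justify Fubini; everything else is a pair of linear changes of variables.
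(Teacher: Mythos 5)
Your proposal is correct and follows essentially the same route as the paper: unfold $\mathcal{B}$, swap the order of integration by Fubini, and separate the integrand by a translation change of variable (you do two small substitutions where the paper does one, but this is only a cosmetic reordering). You also spell out the absolute-integrability check behind Fubini and why it needs $|\Re\tau|<c$, which the paper leaves implicit.
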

\begin{proof}
We have
\begin{align*}
	\mathcal{B}\left[\int_{t}^{+\infty} \psi (s) \phi (s-t) \ ds \right](\tau) 
  & =  \int_{-\infty}^{+\infty} \int_{t}^{+\infty} \psi (s) \phi (s-t)    e^{-\tau t} \ ds  \  dt \\
  & =  \int_{-\infty}^{+\infty} \int_{-\infty}^{s} \psi (s) \phi (s-t)    e^{-\tau s} e^{\tau (s-t)} \ dt  \  ds \\
  & =   \int_{-\infty}^{+\infty}  \psi (s)  e^{-\tau s}  \  ds  \int_0^{+\infty} \phi (y)    e^{\tau y} \ dy  \\ 
	& = \mathcal{B}[\psi_t](\tau) \mathcal{L}[\phi_t]( - \tau),
\end{align*}
where we used Fubini’s Theorem and the change of variable $y=s-t$.
\end{proof}

\subsubsection{Resolvent estimates}
\label{sec2:Resolvent estimates}

Using the one-sided and two-sided Laplace transforms, we will convert the integral Voltera-type equation \eqref{U_hat_1} for $\widehat{\varrho}_t(k)$ into an algebraic equation.
By taking the two-sided Laplace transform of (\ref{U_hat_1}), thanks to \eqref{Property-Laplace}, we get
\begin{equation*}
	\mathcal{B}[\widehat{\varrho}_t(k)](\tau) + \frac{\vert k \vert^2}{\beta +\vert k \vert^2}  \mathcal{L}[t\widehat{\mu}(-kt) ](-\tau) \mathcal{B}[\widehat{\varrho}_t(k)](\tau) = \mathcal{B}[\widehat{S}_t(k)](\tau),
\end{equation*}
which gives the solution
\begin{equation}
\label{LU1}
	\mathcal{B}[\widehat{\varrho}_t(k)](\tau)  
   =  \frac{\mathcal{B}[\widehat{S}_t(k)](\tau)}{1 + 
   \frac{\vert k \vert^2}{\beta +\vert k \vert^2} 
   \mathcal{L}[t\widehat{\mu}(-kt) ](-\tau) }.
\end{equation}
By the Penrose stability condition (\ref{Penrose}),  the denominator of (\ref{LU1}) never vanishes.  
Indeed, note that the requirement
\begin{equation*}
\inf_{k \in \mathbb{Z}_*^d;
  \Re \tau \leq 0 }{\left| 1 + \frac{\vert k \vert ^2}{\beta + \vert k \vert^2} \int_0^{+\infty}  t\widehat{\mu}(-kt) e^{\tau t} \ dt \right|} \geq \kappa_0 > 0
\end{equation*}
needed in \eqref{LU1} is implied by \eqref{Penrose}.
Next, we define the  kernel $\widetilde{K}_k(\tau)$, as
\begin{equation}
\label{resolvent}
	\widetilde{K}_k(\tau) := - 
  \frac{ \frac{\vert k \vert^2}{\beta +\vert k \vert^2}  \mathcal{L}[t\widehat{\mu}(-kt) ](\tau)}{1+\frac{\vert k \vert^2}{\beta +\vert k \vert^2}  \mathcal{L}[t\widehat{\mu}(-kt) ](\tau)}.
\end{equation}
Therefore, (\ref{LU1}) becomes
\begin{equation}
\label{LU2}
	\mathcal{B}[\widehat{\varrho}_t(k)](\tau)  = \mathcal{B}[\widehat{S}_t(k)](\tau) + \widetilde{K}_k(- \tau) \mathcal{B}[\widehat{S}_t(k)](\tau).
\end{equation}
Thanks to \cite[Lemma 3.2] {GNR}, we have the following property of the resolvent kernel.
\begin{lem}
\label{lemmaresolvent}
Let $\mu$ be an analytic homogeneous equilibrium satisfying assumptions (H1)-(H3) with $\lambda>0$ as in \eqref{norm_mu}. Then there is a positive constant $\lambda_1 < \frac{1}{2}\lambda$ so that the function $\widetilde{K}_k(\tau)$ is an analytic function in $ \lbrace \Re \tau \geq -  \lambda_1 \vert k \vert \rbrace $. Moreover, there exists a constant $C$ such that
\begin{equation}
\label{ine_resolvent}
	\vert \widetilde{K}_k(\tau) \vert \leq \frac{C}{1 + \vert k \vert^2 +\vert \Im \tau \vert^2},
\end{equation}
for $k \neq 0$ and $\Re \tau = -  \lambda_1 \vert k \vert$.
\end{lem}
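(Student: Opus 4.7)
The plan is to factor $\widetilde K_k(\tau) = -L_k(\tau)/(1+L_k(\tau))$, where
$$L_k(\tau) := \frac{|k|^2}{\beta+|k|^2}\int_0^{+\infty} t\widehat{\mu}(-kt)\, e^{-\tau t}\, dt,$$
and analyse the numerator and denominator separately. Analyticity of $L_k$ on the half-plane $\{\Re\tau>-\lambda|k|\}$, and in particular on $\{\Re\tau\geq -\lambda_1|k|\}$ for any $\lambda_1<\lambda$, follows immediately from the pointwise exponential bound $|\widehat{\mu}(\eta)|\leq C e^{-\lambda\langle\eta\rangle}$ provided by (H1): the integrand is absolutely integrable on bounded $t$-intervals and decays exponentially at $+\infty$, uniformly in the strip.

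To get quantitative decay of $L_k$, I would integrate by parts twice in the Laplace integral, exploiting the two facts that $\phi(t):=t\widehat{\mu}(-kt)$ satisfies $\phi(0)=0$ and $\phi'(0)=\widehat{\mu}(0)=1$, the latter by (H3). This yields
$$L_k(\tau) \,=\, \frac{|k|^2}{\beta+|k|^2}\cdot\frac{1}{\tau^2}\left(1 + \int_0^{+\infty}\phi''(t)\,e^{-\tau t}\,dt\right),$$
with vanishing boundary terms at $+\infty$ because $\lambda_1<\lambda$. The estimate $|\phi''(t)|\leq C(|k|+|k|^2 t)e^{-\lambda|k|t}$, again from (H1) applied to derivatives of $\widehat\mu$, bounds the remainder integral uniformly in $k$ and $\tau$. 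Since $|\tau|^2 = \lambda_1^2|k|^2+|\Im\tau|^2\gtrsim 1+|k|^2+|\Im\tau|^2$ on our contour (using $|k|\geq 1$ for $k\in\bz_*^d$), this gives
$$|L_k(\tau)|\leq \frac{C}{1+|k|^2+|\Im\tau|^2} \qquad\text{on } \Re\tau=-\lambda_1|k|.$$

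It then remains to show that $|1+L_k(\tau)|$ is bounded below uniformly in $k\in\bz_*^d$ on the strip $\{-\lambda_1|k|\leq \Re\tau\leq 0\}$. On the right boundary $\Re\tau=0$, this is exactly the Penrose condition (H2) after the symmetry $k\leftrightarrow -k$, which yields $|1+L_k|\geq\kappa_0$. The pointwise decay just obtained gives $|L_k|\leq 1/2$ as soon as $|\Im\tau|^2+|k|^2$ exceeds a fixed constant, so that $|1+L_k|\geq 1/2$ automatically in this "far" region. On the complementary bounded region I would rescale $z=\tau/|k|$, under which $L_k(\tau)$ acquires a $k$-uniform form depending only on $z$ and $\hat k:=k/|k|$, and run a standard continuity/perturbation argument: choosing $\lambda_1<\lambda/2$ small enough ensures $|L_k(\tau)-L_k(i\Im\tau)|<\kappa_0/2$ throughout the strip, hence $|1+L_k(\tau)|\geq\kappa_0/2$. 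Combining this lower bound with the upper bound on $|L_k|$ yields \eqref{ine_resolvent}.

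The main obstacle is precisely this uniform-in-$k$ non-vanishing argument in the left-half strip: (H2) provides no direct information off the imaginary axis, and the fact that the depth $\lambda_1|k|$ of the strip grows with $|k|$ makes the perturbative step delicate without the rescaling $z=\tau/|k|$. All remaining steps reduce to Laplace-transform estimates, with (H1) and (H3) supplying the exponential decay and the explicit value $\widehat\mu(0)=1$ needed to make the integration by parts produce the clean $1/\tau^2$ prefactor.
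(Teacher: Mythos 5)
The paper does not actually write out a proof of this lemma: it notes that $\widetilde K_k$ differs from the kernel in \cite[eq.~(3.6)]{GNR} only by the factor $|k|^2(\beta+|k|^2)^{-1}$ (after $k\mapsto -k$), and refers to \cite[Lemma 3.2]{GNR}. Your reconstruction follows that same standard route — two integrations by parts using $\phi(0)=0$ and $\phi'(0)=\widehat\mu(0)=1$ from (H3) to extract the $1/|\tau|^2$ decay, then Penrose on $\{\Re\tau=0\}$ plus a perturbation estimate $|L_k(\tau)-L_k(i\Im\tau)|\lesssim \lambda_1/(\lambda-\lambda_1)^3$ (uniform in $|k|\geq 1$, so the rescaling $z=\tau/|k|$ is a safe but in fact optional device) to push the lower bound on $|1+L_k|$ into the strip of depth $\lambda_1|k|$ — and it is correct.
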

    Note, by sending $k\mapsto-k$ in \eqref{resolvent}, our resolvent kernel $\widetilde{K}$ is equal, up to the constant $|k|^2(\beta+|k|^2)^{-1}$ to the one in \cite[equation (3.6)]{GNR}. Since the estimates on this kernel depend only on the modulus of $k$, the proof of Lemma \ref{lemmaresolvent} is analogous to the one in \cite[Lemma 3.2] {GNR} and is therefore omitted.

\subsubsection{Inverse of the resolvent kernel}
\label{Pointwise estimates}
First, we recall the definition of the inverse Laplace transform for the function $\widetilde{K}_k(\tau)$ defined in the region $ \lbrace \Re \tau \geq -  \lambda_1 \vert k \vert \rbrace $,
\begin{equation}
\label{K_hat}
	\widehat{K}_k(t)
    \equiv \mathcal{L}^{-1} [\widetilde{K}_k(\tau)] (t) 
    := \frac{1}{2 \pi i} \int_{\lbrace \Re \tau =  \gamma_0  \rbrace} \widetilde{K}_k(\tau) e^{\tau t} \ d\tau,
\end{equation}
for some constant $\gamma_0 > 0$.
Then, we have the following property:
\begin{lem}\cite[Proposition 3.3]{GNR}
	The function $\widehat{K}_k(t)$ satisfies 
\begin{equation}
\label{inverse_resolvent}
	\vert \widehat{K}_k (t) \vert \leq C e^{- \lambda_1 \vert k \vert t },
\end{equation}
where the constant $\lambda_1$ comes from Lemma \ref{lemmaresolvent}.
\end{lem}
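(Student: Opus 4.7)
The plan is to prove the pointwise bound on $\widehat{K}_k(t)$ by a standard contour-shift argument, moving the line of integration in the inverse Laplace transform \eqref{K_hat} from $\{\Re \tau = \gamma_0\}$ all the way to the leftmost line where analyticity is still guaranteed by Lemma \ref{lemmaresolvent}, namely $\{\Re \tau = -\lambda_1|k|\}$.

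First I would write, for $k \in \bz_*^d$ and $R>0$ large, the closed rectangular contour $\Gamma_R$ with vertical sides $\Re \tau = \gamma_0$ and $\Re \tau = -\lambda_1|k|$ and horizontal sides $\Im \tau = \pm R$. By Lemma \ref{lemmaresolvent}, $\widetilde K_k(\tau)$ is analytic on and inside this rectangle, so Cauchy's theorem yields $\oint_{\Gamma_R} \widetilde K_k(\tau) e^{\tau t}\,d\tau = 0$. The next step is to argue that the two horizontal pieces vanish as $R\to\infty$: on them $|e^{\tau t}| \le e^{\gamma_0 t}$, while the bound \eqref{ine_resolvent}, which holds with the same proof on any vertical line within the analyticity strip (since the argument in \cite{GNR} only uses the Penrose condition and the decay of $\widehat\mu$), gives $|\widetilde K_k(\tau)| \lesssim 1/R^2$; hence the horizontal contributions are $O(1/R)$ and vanish in the limit. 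This lets me replace the contour in \eqref{K_hat} by $\{\Re \tau = -\lambda_1|k|\}$.

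Parametrizing the shifted contour as $\tau = -\lambda_1|k| + iy$ with $y \in \br$, I get
\begin{equation*}
\widehat K_k(t) = \frac{1}{2\pi i}\int_{-\infty}^{+\infty} \widetilde K_k(-\lambda_1|k|+iy)\, e^{(-\lambda_1|k|+iy)t}\, i\, dy,
\end{equation*}
so taking absolute values and using $|e^{(-\lambda_1|k|+iy)t}| = e^{-\lambda_1|k|t}$ together with \eqref{ine_resolvent} yields
\begin{equation*}
|\widehat K_k(t)| \le \frac{e^{-\lambda_1|k|t}}{2\pi}\int_{-\infty}^{+\infty} \frac{C\,dy}{1+|k|^2+y^2} \le \frac{C\,e^{-\lambda_1|k|t}}{\sqrt{1+|k|^2}},
\end{equation*}
which is stronger than, and in particular implies, the claimed inequality \eqref{inverse_resolvent}.

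The only delicate point is the justification of the contour shift, namely that \eqref{ine_resolvent} extends to every vertical line $\Re \tau = c$ with $c \in [-\lambda_1|k|, \gamma_0]$ with a constant independent of $R$; this is where one actually uses that $\widehat\mu$ is the Fourier transform of a real-analytic density, so $\mathcal{L}[t\widehat\mu(-kt)](\tau)$ decays in $|\Im \tau|$ uniformly in the strip, and that the denominator in \eqref{resolvent} is bounded below by $\kappa_0$ by the Penrose condition. Everything else reduces to Cauchy's theorem and the computation of an elementary integral.
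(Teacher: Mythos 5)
Your proof is correct and follows essentially the same approach as the paper: deform the contour from $\{\Re\tau=\gamma_0\}$ to $\{\Re\tau=-\lambda_1|k|\}$, apply the resolvent bound \eqref{ine_resolvent}, and compute the elementary integral in $\Im\tau$. The paper simply asserts the contour shift "by Cauchy theory" without checking the horizontal pieces; you spell out that step via a rectangular contour and the observation that the decay in $|\Im\tau|$ persists on intermediate vertical lines, which is a minor but legitimate point of extra care rather than a different method, and you also retain the (unneeded) extra factor $(1+|k|^2)^{-1/2}$ that the paper drops.
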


\begin{proof}
By Lemma \ref{lemmaresolvent},  we know that $\widetilde{K}_k (\tau)$ is analytic in the region $ \lbrace \Re \tau \geq -  \lambda_1 \vert k \vert \rbrace $, therefore we have by the Cauchy theory on complex contour integral,
\begin{equation*}
    \widehat{K}_k(t) =
	 \frac{1}{2 \pi i} \int_{\lbrace \Re \tau=  \gamma_0  \rbrace} \widetilde{K}_k (\tau) e^{\tau t} \ d\tau = \frac{1}{2 \pi i} \int_{\lbrace \Re \tau= -  \lambda_1 \vert k \vert \rbrace} \widetilde{K}_k (\tau) e^{\tau t} \ d\tau.
\end{equation*}
That is, we can deform the complex contour of integration from $\lbrace \Re \tau=  \gamma_0  \rbrace$ into $\lbrace \Re \tau= -  \lambda_1 \vert k \vert \rbrace$.
Then using \eqref{ine_resolvent} we get,
\begin{align*}
	\vert \widehat{K}_k (t) \vert & \leq  \frac{1}{2 \pi } \int_{\lbrace \Re \tau= - \lambda_1 \vert k \vert \rbrace} \vert \widetilde{K}_k (\tau) e^{ \tau t} \vert \ d\tau\\
	& \leq  \frac{1}{2 \pi } \int_{\lbrace \Re \tau= - \lambda_1 \vert k \vert \rbrace} \frac{C}{1 + \vert k \vert^2 +\vert \Im \tau \vert^2} e^{- \lambda_1 \vert k \vert t}  \ d\tau\\
	& \leq  \frac{C}{2 \pi } e^{- \lambda_1 \vert k \vert t} \int_{-\infty}^{\infty} \frac{1}{1 + \vert k \vert^2 +\vert y\vert^2}  \ dy \leq  C e^{- \lambda_1 \vert k \vert t}.
\end{align*}

\end{proof}

Next, taking the inverse two-sided Laplace transform of \eqref{LU2}, we have the following Proposition.

\begin{prop} 
Let $\mu$ be an analytic homogeneous equilibrium satisfying assumptions (H1)-(H3). 
Let $\widehat{\varrho}_t (k)$ be a solution of \eqref{U_hat_1} and let $\widehat{K}_k(t)$ be given by \eqref{K_hat}. Then we can express $\widehat{\varrho}_t(k)$ as 
\begin{equation}
\label{U_hat_2}
    \widehat{\varrho}_t (k) = \widehat{S}_t(k) + \int_{t}^{+\infty} \widehat{K}_k(s-t) \widehat{S}_s(k) \ ds,
\end{equation}
where $\widehat{S}_t(k)$ is given by \eqref{S_hat} and 
$\widehat{K}_t (k)$ satisfies \eqref{inverse_resolvent}.
\end{prop}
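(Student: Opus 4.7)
The plan is to directly apply the inverse two-sided Laplace transform to the algebraic identity \eqref{LU2}, which has already been derived from \eqref{U_hat_1} by taking $\mathcal{B}[\cdot]$ of both sides, using the convolution rule \eqref{Property-Laplace}, and dividing by $1+\frac{|k|^2}{\beta+|k|^2}\mathcal{L}[t\widehat{\mu}(-kt)](-\tau)$; the latter is bounded below on the relevant strip thanks to the Penrose condition \eqref{Penrose}, so the division is legitimate.

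After inversion, the left-hand side of \eqref{LU2} returns $\widehat{\varrho}_t(k)$ and the first term on the right returns the source $\widehat{S}_t(k)$. For the remaining term I would recognize $\widetilde{K}_k(-\tau)\,\mathcal{B}[\widehat{S}_t(k)](\tau)$ as the two-sided Laplace image of a tail convolution. By \eqref{K_hat}, the function $\widehat{K}_k(t)$ is supported on $\{t\ge 0\}$: for $t<0$ one deforms the Bromwich contour to $\Re\tau=+\infty$, using that $\widetilde{K}_k(\tau)\to 0$ as $\Re\tau\to+\infty$ (since $\mathcal{L}[t\widehat{\mu}(-kt)](\tau)\to 0$ in that limit, by the definition \eqref{resolvent}), together with the decay \eqref{inverse_resolvent} on vertical lines. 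Consequently $\widetilde{K}_k(-\tau)=\mathcal{L}[\widehat{K}_k](-\tau)$, and identity \eqref{Property-Laplace} applied with $\psi_s=\widehat{S}_s(k)$ and $\phi(s)=\widehat{K}_k(s)$ gives
\begin{equation*}
\mathcal{B}\left[\int_t^{+\infty}\widehat{K}_k(s-t)\,\widehat{S}_s(k)\,ds\right](\tau) \;=\; \widetilde{K}_k(-\tau)\,\mathcal{B}[\widehat{S}_t(k)](\tau),
\end{equation*}
which matches the second summand on the right of \eqref{LU2} exactly. Uniqueness of the two-sided Laplace transform on the appropriate strip then delivers \eqref{U_hat_2}.

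The only genuine subtlety lies in ensuring that all the transforms involved are well-defined on a common vertical strip, so that the inversion above is rigorous. For $\widehat{K}_k$ this is provided by \eqref{inverse_resolvent}; for $\widehat{\varrho}_t(k)$ and $\widehat{S}_t(k)$ exponential integrability in $t$ is guaranteed by the Gevrey regularity of $g_\infty$ in \eqref{norm_g_infty} together with the a priori estimates on $(g,\varrho)$ developed in Subsection \ref{subsec_nonlin} and Section \ref{section_distrg}. With those bounds in hand, Fubini's theorem (exactly as employed in the proof of \eqref{Property-Laplace}) and the standard Bromwich inversion make the preceding manipulation rigorous, so no deeper analytical obstacle arises at this stage; the content is essentially linear bookkeeping around the resolvent kernel $\widehat{K}_k$.
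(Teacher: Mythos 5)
Your argument is essentially the same as the paper's: take the inverse two-sided Laplace transform of \eqref{LU2}, recognize $\widetilde{K}_k(-\tau)=\mathcal{L}[\widehat{K}_k](-\tau)$, and invoke the convolution identity \eqref{Property-Laplace} to recover the tail-convolution form \eqref{U_hat_2}. The extra remarks you add (support of $\widehat{K}_k$ on $\{t\ge 0\}$ via contour deformation, and integrability of $\widehat{\varrho}$, $\widehat{S}$ from the a priori bounds) are reasonable supporting details that the paper leaves implicit, but the route is the same.
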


\begin{proof}
With \eqref{K_hat}, we can rewrite \eqref{LU2} as
\begin{align*}
	\mathcal{B}[\widehat{\varrho}_t(k)](\tau) 
   = \mathcal{B}[\widehat{S}_t(k)](\tau)
   +  \widetilde{K}_k( - \tau) \mathcal{B}[\widehat{S}_t(k)](\tau) = \mathcal{B}[\widehat{S}_t(k)](\tau)
   +  \mathcal{L}[\widehat{K}_k(t)](- \tau) \mathcal{B}[\widehat{S}_t(k)](\tau).
\end{align*}
Then, we take the inverse two-sided Laplace transform, and we recall the convolution property  \eqref{Property-Laplace}.
\end{proof}

\subsection{Estimates on the modified density \texorpdfstring{$\varrho$}{Lg}}
\label{subsec_nonlin}
We recall the notation of the weight:
$   A_t(k,\eta):= e^{\lambda (t) \langle k, \eta \rangle^\gamma} \langle k, \eta \rangle^\sigma.$

\begin{lem}
\label{lem_varrho}
 Assume $\widehat{\varrho}_t (k)$ satisfies (\ref{U_hat_2}). Then, we have
    \begin{align*}
   \left\Vert \langle 
   t \rangle^b A_t(\nabla_x,t \nabla_x) 
   \varrho_t
   \right\Vert_{L_t^2 L_x^2}  
   \le
   C 
   \left \Vert \jp{t}^b A_t(\nabla_x,t \nabla_x) S_t
   \right \Vert_{L_t^2 L_x^2},  
\end{align*}
for some constant $C>0$ and where $S_t$ is given in \eqref{S_hat}.
\end{lem}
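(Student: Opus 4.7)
The approach is to Fourier transform in $x$ and use the explicit representation of $\widehat{\varrho}_t(k)$ given in \eqref{U_hat_2}, combined with the pointwise decay of the resolvent kernel from \eqref{inverse_resolvent}. Setting $W(t,k) := \langle t\rangle^b A_t(k,kt)$, Plancherel reduces the claim to the frequency-by-frequency bound $\sum_{k} \|W(\cdot,k)\widehat{\varrho}_\cdot(k)\|_{L^2_t}^2 \le C \sum_{k}\|W(\cdot,k)\widehat{S}_\cdot(k)\|_{L^2_t}^2$. The mode $k=0$ is immediate since \eqref{U_hat_1} degenerates to $\widehat{\varrho}_t(0) = \widehat{S}_t(0)$.

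For $k \in \bz_*^d$, the key observation I plan to exploit is that $W(t,k)$ is monotone nondecreasing in $t \ge 0$: the factor $\langle t\rangle^b$ is obviously nondecreasing, $\langle k, kt\rangle^\sigma$ is monotone in $|t|$, and $\lambda(t) = \lambda_\infty - C\langle t\rangle^{-\delta}$ from \eqref{def_lambda} is nondecreasing as well, so the full Gevrey weight $A_t(k,kt)$ grows in $t$. Therefore $W(t,k) \le W(s,k)$ for all $s \ge t$, and applying the triangle inequality to \eqref{U_hat_2} together with the kernel bound \eqref{inverse_resolvent} gives
\[
W(t,k)\,|\widehat{\varrho}_t(k)| \le W(t,k)\,|\widehat{S}_t(k)| + C \int_t^{+\infty} e^{-\lambda_1 |k|(s-t)}\, W(s,k)\,|\widehat{S}_s(k)|\,ds.
\]

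The final step is to take the $L^2_t$ norm of this pointwise bound and invoke Minkowski's integral inequality (equivalently, a convolution estimate of type $L^1 \ast L^2 \to L^2$). After the change of variables $u = s-t$ and by translation invariance of $L^2_t$, the kernel $e^{-\lambda_1|k|u}$ contributes $1/(\lambda_1|k|) \le 1/\lambda_1$ uniformly in $k \in \bz_*^d$, giving $\|W(\cdot,k)\widehat{\varrho}_\cdot(k)\|_{L^2_t} \le (1+C/\lambda_1)\|W(\cdot,k)\widehat{S}_\cdot(k)\|_{L^2_t}$. Squaring, summing in $k$, and Fubini yield the stated inequality. The only subtle point is confirming the monotonicity of the Gevrey weight $A_t(k,kt)$ in $t$, which is the pivot that lets the kernel decay dominate; everything else is a routine convolution estimate.
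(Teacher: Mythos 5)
Your proposal is correct and follows essentially the same route as the paper: multiply through by the time-dependent weight, use the monotonicity $\langle t\rangle^b A_t(k,kt) \le \langle s\rangle^b A_s(k,ks)$ for $t\le s$ (driven by the fact that $\lambda(t)$ is \emph{increasing} in this backward problem) to push the weight onto $\widehat{S}_s(k)$, and then bound the resulting Volterra operator with kernel $\widehat{K}_k(s-t)$ on $L^2_t$ using the exponential decay \eqref{inverse_resolvent}. The only cosmetic difference is that you invoke Minkowski/Young's $L^1*L^2\to L^2$ estimate for the convolution, whereas the paper phrases the same $\mathcal{O}(1/(\lambda_1|k|))$ operator bound via Riesz--Thorin interpolation between trivial $L^1\to L^1$ and $L^\infty\to L^\infty$ bounds; both yield the identical constant, and your explicit treatment of the $k=0$ mode fills a point the paper leaves implicit.
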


\begin{proof}
Starting from \eqref{U_hat_2}, we multiply by
$\jp{t}^b A_t(k,kt)$ on both side and take the $L_t^2 L_k^2$ norm. Therefore,
we get
\begin{equation*}
     \left\Vert \langle 
   t \rangle^b A_t(k,kt) 
   \widehat{\varrho}_t(k)
   \right\Vert_{L_t^2 L_k^2}  
   \le  \left\Vert \langle 
   t \rangle^b A_t(k,kt) 
   S_t(k)
   \right\Vert_{L_t^2 L_k^2}  
   +
    \left\Vert \langle 
   t \rangle^b A_t(k,kt) 
   \int_{t}^{+\infty} \widehat{K}_k(s-t) \widehat{S}_s(k) \ ds
   \right\Vert_{L_t^2 L_k^2}.
\end{equation*}
So, we must control only the last norm on the right-hand side.
By using $\lambda(t)\le \lambda(s)$ and $ A_t(k,kt) \le  A_s(k,ks)$  if $t \le s$, we obtain
$$
\left\Vert \jp{t}^b A_t(k,kt) 
   \int_{t}^{+\infty} \widehat{K}_k(s-t) \widehat{S}_s(k) \ ds
   \right\Vert_{L_t^2 L_k^2} \le 
   \left\Vert  
   \int_{t}^{+\infty} 
   \widehat{K}_k(s-t) \jp{s}^b A_s(k,ks)
   \widehat{S}_s(k) \ ds
   \right\Vert_{L_t^2 L_k^2}.
$$
Let us define, for a fixed $k \in \mathbb{Z}^d \setminus \{ 0 \}$, the operator
\begin{align*}
    \mathcal{P}_k (\phi) = \int_{t}^{+\infty} 
   \widehat{K}_k(s-t) \phi_k(s) \ ds,
\end{align*}
where $\widehat{K}_k(t)$ is given by \eqref{K_hat}. We show that $\norm{\mathcal{P}_k (\phi)}_{L_t^2 (\mathbb{R}^+) \rightarrow L_t^2 (\mathbb{R}^+)} \leq C$ by Riesz-Thorin interpolation Theorem. 
For the $L_t^\infty (\mathbb{R}^+)$ norm, we have
\begin{align*}
    \norm{\mathcal{P}_k (\phi)}_{L_t^\infty (\mathbb{R}^+)} \leq \frac{C}{\lambda_1 \vert k \vert} \norm{\phi}_{L_t^\infty (\mathbb{R}^+)},
\end{align*}
where we used the decay \eqref{inverse_resolvent} to bound the integral in $s$.
For the $L_t^1 (\mathbb{R}^+)$ norm, we have by Fubini’s Theorem
\begin{align*}
    \norm{\mathcal{P}_k (\phi)}_{L_t^1 (\mathbb{R}^+)} \leq \frac{C}{\lambda_1 \vert k \vert} \norm{\phi}_{L_t^1 (\mathbb{R}^+)}.
\end{align*}
Therefore, by interpolation, we get that $\mathcal{P}_k$ is bounded in $L_t^2$. Hence, by taking the $L_k^2$ and using Fubini’s Theorem, we obtain
$$
\left\Vert \langle 
   t \rangle^b A_t(k,kt) 
   \int_{t}^{+\infty} \widehat{K}_k(s-t) \widehat{S}_s(k) \ ds
   \right\Vert_{L_t^2 L_k^2} \le 
   \frac{C}{\lambda_1 \vert k \vert}
\left \Vert \jp{t}^b A_t(k,kt) \widehat{S}_t (k)
   \right \Vert_{L_t^2 L_k^2}.
$$
Finally, coming back to the physical side, we get the result.
\end{proof}
From Lemma \ref{lem_varrho}, we see that to obtain an estimate on $\varrho$, we only need to estimate the source term $S$. We carry this analysis in the following Proposition.
\begin{prop}
\label{sec2:prop_estimate_S}
Let $S_t$ be the inverse Fourier transform of \eqref{S_hat}. Let $g_\infty$ be a Gevrey function of mean zero satisfying \eqref{norm_g_infty}. Then for $\sigma > 10+d$, $M > \frac{d}{2}$, $b>10$,  we have
    \begin{align*}
        \left \Vert \langle
        t \rangle^b A_t(\nabla_x,t \nabla_x) S_t
        \right \Vert_{L_t^2 L_x^2} 
   & \le C
   \left \Vert
   \langle v \rangle^m e^{\lambda \langle \nabla_x, \nabla_v  \rangle^\gamma} 
   \langle \nabla_x,\nabla_v \rangle^{\sigma+b} g_\infty
   \right \Vert_{L_{x,v}^2} + 
   \left \Vert \jp{t}^b A_t(\nabla_x,t \nabla_x)  h( U)
   \right \Vert_{L_t^2 L_x^2} \\
   & \quad +C \left \Vert \jp{t}^b A_t(\nabla_x,t \nabla_x) \varrho_t
   \right \Vert_{L_t^2 L_x^2} \sup_{t \in \br_+} \left \Vert \jp{v}^M  B_t(\nabla_x,\nabla_v) g_t
   \right \Vert_{L_{x,v}^2},
    \end{align*}
for some constant $C>0$.
\end{prop}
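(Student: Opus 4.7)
The plan is to write $S_t = S_t^{(1)} - S_t^{(2)} - S_t^{(3)}$ according to the three terms in \eqref{S_hat}, namely the asymptotic datum $\widehat g_\infty(k,kt)$, the nonlinear correction $\widehat{h(U_t)}(k)$, and the bilinear integral, and to estimate each contribution separately by the triangle inequality.

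For the $g_\infty$ piece, the mode $k=0$ drops out because $g_\infty$ has zero mean. For $k \ne 0$ I would change variables $\tau = |k|t$, rewriting the $L^2_t$ integral as the $L^2$ norm of $\widehat g_\infty(k,\cdot)$ restricted to the ray $\{(k/|k|)\tau : \tau \ge 0\}$. Since $\lambda(t) \le \lambda_\infty < \lambda$ and $\jp{t} \lesssim \jp{k,kt}$, all time weights can be transferred to $(k,\eta)$-weights. A standard Sobolev trace inequality onto the ray, available because the $\jp{v}^M$-moment regularity provides $M > d/2 > (d-1)/2$ derivatives in $\eta$, upgrades the ray integral to a full $L^2_\eta$ norm; after Leibniz is applied to redistribute derivatives onto the Gevrey multiplier one obtains precisely the first term on the right-hand side. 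The $h(U)$ piece is immediate.

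The bilinear piece is the main work. Pulling $\jp{t}^b A_t(k,kt)$ inside the $\ell$-sum and $s$-integral, and exploiting $\lambda(t) \le \lambda(s)$ together with the sub-additivity of $\jp{\cdot}^\gamma$ (valid since $\gamma \le 1$) applied to the decomposition $(k,kt) = (\ell,\ell s) + (k-\ell,kt-\ell s)$, the exponential part of the weight factors as
\begin{equation*}
 e^{\lambda(t)\jp{k,kt}^\gamma} \le e^{\lambda(s)\jp{\ell,\ell s}^\gamma} \, e^{\lambda(s)\jp{k-\ell,kt-\ell s}^\gamma}.
\end{equation*}
The Sobolev part splits as $\jp{k,kt}^\sigma \lesssim \jp{\ell,\ell s}^\sigma + \jp{k-\ell,kt-\ell s}^\sigma$, producing a paraproduct decomposition into two regions depending on which of the two frequencies dominates. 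In both regions the remaining scalars are absorbed via $\jp{t}^b \le \jp{s}^b$, $(s-t) \le \jp{\ell,\ell s}$ for $\ell \ne 0$, and $|k\cdot \ell|/(\beta+|\ell|^2) \le C(1+|k-\ell|)$; the surplus $|k-\ell|$ is exactly what the extra $\jp{k-\ell,kt-\ell s}$ inside $B_s$ provides on the $g$-side. After extracting the natural factors $\jp{s}^b A_s(\ell,\ell s)|\widehat\varrho_s(\ell)|$ and $B_s(k-\ell,kt-\ell s)|\widehat g_s(k-\ell,kt-\ell s)|$, the residual kernel decays polynomially like $\jp{\ell,\ell s}^{-\sigma+O(1)}$ (or its counterpart on the other frequency), summable and integrable in $s$ because $\sigma > d+10$. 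Closing with Minkowski in $s$, Cauchy--Schwarz in $\ell$, and Plancherel produces the desired bilinear bound with $\varrho$ in $L^2_t L^2_x$ and $g$ in $L^\infty_t$.

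I expect the main obstacle to be the \emph{reaction} paraproduct region, in which $\widehat\varrho_s(\ell)$ occupies the highest frequency: the derivative loss coming from the $(s-t)\,k\cdot\ell$ factor cannot be moved onto $g$ and must be paid for entirely by the polynomial Sobolev correction, which is precisely why $\sigma > d+10$ is required. The companion \emph{transport} region is easier because the extra $\jp{k-\ell,kt-\ell s}$ inside $B_s$ is free, and no derivative balance has to be performed by hand.
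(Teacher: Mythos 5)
Your decomposition of $S_t$ into the $g_\infty$ piece, the $h(U)$ piece and the bilinear piece, the trace argument for $g_\infty$, and the paraproduct splitting into ``low--high,'' ``high--low,'' and ``high--high'' regions all match the paper's structure. The $T_{LH}$ and $T_{HH}$ regions go through roughly as you describe. However, there is a genuine gap in the reaction (high--low) region, and it is precisely the gap that the whole Gevrey-$\frac13$ machinery exists to close.

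You factor the weight via plain subadditivity,
$e^{\lambda(t)\jp{k,kt}^\gamma} \le e^{\lambda(s)\jp{\ell,\ell s}^\gamma}\,e^{\lambda(s)\jp{k-\ell,kt-\ell s}^\gamma}$,
which simultaneously discards the factor $e^{-(\lambda(s)-\lambda(t))\jp{k,kt}^\gamma}$ and fails to produce the strict gain $e^{-(1-c)\lambda(s)\jp{k-\ell,kt-\ell s}^\gamma}$ with $c<1$ available from the sharpened triangle inequality for $\gamma<1$. You then assert that after extracting $\jp{s}^b A_s(\ell,\ell s)|\widehat\varrho_s(\ell)|$ and $B_s(k-\ell,kt-\ell s)|\widehat g_s(k-\ell,kt-\ell s)|$, the residual kernel ``decays polynomially \ldots summable and integrable in $s$ because $\sigma>d+10$.'' This is false near the plasma echo resonances $|kt-\ell s|\lesssim 1$: there $\jp{k-\ell,kt-\ell s}$ stays bounded as $s\to\infty$, so a polynomial weight like $\jp{k-\ell,kt-\ell s}^{-\sigma}$ gives no decay in $s$ at all, while the explicit prefactor $(s-t)\,k\cdot\ell/(\beta+|\ell|^2)$ is of order $\jp{s}/|\ell|$ there. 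The $\sigma>d+10$ condition only controls the sum over the Fourier frequency that is displaced by $\ell$ (or $k$); it does not control the $ds$ (or $dt$) integral across the resonance, and the resulting Schur-type kernel norm diverges like $t$ (or $s$) under your bounds.

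What the paper actually does in that region is: keep both the factor $e^{-(\lambda(s)-\lambda(t))\jp{k,kt}^\gamma}$ (coming from not simply bounding $\lambda(t)\le\lambda(s)$) and the gain $e^{-(1-c)\lambda(s)\jp{k-\ell,kt-\ell s}^\gamma}$, then exploit the explicit time-dependence of $\lambda$ via $\lambda(s)-\lambda(t)\gtrsim |s-t|/(\jp{s}\jp{t}^\delta)$; near the echo $s\approx kt/\ell$, one has $|s-t|\gtrsim t/|\ell|$, which converts the discarded exponential into a decaying factor $e^{-\jp{t}^{1-\delta}\jp{s}^{-1}|\ell|^{-1}\jp{k,kt}^\gamma}$. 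Trading this for polynomial growth in $|\ell|$ with exponent $\alpha=\frac{1}{\gamma-\delta}$ and demanding summability in $\ell$ yields $\alpha(1-\gamma)-1<1$, i.e.\ $\gamma>\frac{1+2\delta}{3}$. Your proposal never produces the $\gamma>\frac13$ constraint because it never confronts the echo mechanism; without that step the $T_{HL}$ estimate does not close, and the proof is incomplete in the place where all the difficulty lives.
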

 
To prove this result, we need the following Lemma \cite[Lemma 3.2]{BMM_13} regarding some Gevrey inequalities we use several times. 
\begin{lem}
Let $x,y \geq 0$ and $\gamma \in (0,1)$.
\begin{itemize}
    \item[i)] Then we have the following triangle inequality,
    \begin{align}
        \label{triangle_inequality_Gevrey}
        C \left( \jp{x}^\gamma + \jp{y}^\gamma \right) \leq \jp{x+y}^\gamma,
    \end{align}
    for some $C=C(\gamma) >0$ depending only on $\gamma$.
    \item[ii)] Then there exists a constant $C=C(\gamma)$ such that
    \begin{align}
    \label{gevrey_inequality_2}
        \abs{\jp{x}^\gamma - \jp{y}^\gamma} \leq C \frac{\jp{x-y}}{\jp{x}^{1-\gamma} + \jp{y}^{1-\gamma}}.
    \end{align}
    \item[iii)]If $\abs{x-y} \leq \frac{x}{K}$ for some $K >1$, then we have the following inequality:
    \begin{equation}
    \label{gevrey_inequality}
        \abs{\jp{x}^\gamma - \jp{y}^\gamma} \le \frac{\gamma}{(K-1)^{1-\gamma}} \jp{x -y}^\gamma.
    \end{equation}
    \item[iv)] Then there exists a constant $c=c(\gamma) \in (0,1) $ such that
    \begin{align}
    \label{gevrey_inequality_4}
        \jp{x + y}^\gamma \leq c \left( \jp{x}^\gamma + \jp{y}^\gamma \right).
    \end{align}
\end{itemize}
\end{lem}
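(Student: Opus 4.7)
The plan is to prove each item (i)--(iv) by elementary means, exploiting the monotonicity and concavity of $t\mapsto t^\gamma$ on $[0,\infty)$ together with the explicit form $\jp{t}=\sqrt{1+t^2}$. Throughout I would use the trivial facts $\jp{t}\ge 1$ and $\jp{t}\ge |t|$ freely, and I would reduce to $x,y\ge 0$ (the inequalities only involve $\jp{x},\jp{y},\jp{x\pm y}$, which are even in each argument).

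For (i), I would simply note that for $x,y\ge 0$ one has $(x+y)^2\ge \max(x^2,y^2)$, hence $\jp{x+y}\ge \max(\jp{x},\jp{y})$. Raising to the power $\gamma$ and using $\max(a,b)\ge \tfrac12(a+b)$ for non-negative reals gives the claim with $C=\tfrac12$.

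For (ii), my approach is to apply the mean value theorem to $f(t):=\jp{t}^\gamma$, whose derivative is $f'(t)=\gamma\, t\,\jp{t}^{\gamma-2}$ and therefore satisfies $|f'(t)|\le \gamma\,\jp{t}^{\gamma-1}$. I split into two regimes. In the regime where $\jp{x-y}\le \tfrac12\max(\jp{x},\jp{y})$, the reverse triangle inequality forces $\jp{x}$ and $\jp{y}$ to be comparable, so $\jp{t}^{\gamma-1}$ is uniformly comparable to $\jp{x}^{\gamma-1}+\jp{y}^{\gamma-1}$ along the entire segment $[\min(x,y),\max(x,y)]$, and integration yields the bound directly. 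In the complementary regime where $\jp{x-y}$ dominates, the crude bound $|\jp{x}^\gamma-\jp{y}^\gamma|\le \jp{x}^\gamma+\jp{y}^\gamma$ together with the algebraic inequality $\jp{x}^{1-\gamma}+\jp{y}^{1-\gamma}\le C\,\jp{x-y}^{1-\gamma}$ closes the estimate after multiplying numerator and denominator by $\jp{x}^{1-\gamma}+\jp{y}^{1-\gamma}$.

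For (iii), I would invoke the mean value theorem once more: there exists $\xi$ strictly between $x$ and $y$ with $\jp{x}^\gamma-\jp{y}^\gamma=\gamma\,\xi\,\jp{\xi}^{\gamma-2}(x-y)$, hence $|\jp{x}^\gamma-\jp{y}^\gamma|\le \gamma\,|x-y|\,\jp{\xi}^{\gamma-1}$. The hypothesis $|x-y|\le x/K$ forces $\xi\ge x-x/K=x(K-1)/K$ throughout the segment, so $\jp{\xi}\ge x(K-1)/K$. Writing $|x-y|=|x-y|^\gamma\cdot |x-y|^{1-\gamma}$ and bounding the second factor by $(x/K)^{1-\gamma}$, the factors of $x$ cancel and one is left with $\gamma/(K-1)^{1-\gamma}\cdot |x-y|^\gamma\le \gamma/(K-1)^{1-\gamma}\cdot \jp{x-y}^\gamma$.

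For (iv), the natural route is the subadditivity of $t\mapsto t^\gamma$ on $[0,\infty)$, namely $(a+b)^\gamma\le a^\gamma+b^\gamma$, combined with the elementary bound $\jp{x+y}\le \jp{x}+\jp{y}$ obtained by expanding $(\jp{x}+\jp{y})^2=2+x^2+y^2+2\sqrt{(1+x^2)(1+y^2)}$ and using $\sqrt{(1+x^2)(1+y^2)}\ge 1+xy$ to conclude it exceeds $1+(x+y)^2=\jp{x+y}^2$. These two steps give $\jp{x+y}^\gamma\le \jp{x}^\gamma+\jp{y}^\gamma$. The main obstacle is promoting this to a constant $c=c(\gamma)\in(0,1)$: in the diagonal regime $x=y$ one gets $\jp{2x}^\gamma\le 2^{\gamma-1}\cdot 2\jp{x}^\gamma$ with $2^{\gamma-1}<1$, and I would interpolate this with the strict concavity gain $(a+b)^\gamma\le 2^{\gamma-1}(a^\gamma+b^\gamma)+\mathrm{(lower\ order)}$ on bounded regions to extract the strict constant—this interpolation step is the delicate part, since the ratio $\jp{x+y}^\gamma/(\jp{x}^\gamma+\jp{y}^\gamma)$ tends to $1$ when one of the arguments vanishes, so the claim must be read as valid under the implicit normalization of the Japanese bracket $\jp{\cdot}\ge 1$ used throughout the paper.
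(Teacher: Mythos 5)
Your arguments for (i), (ii), and (iii) are correct and self-contained; the paper itself supplies no proof, citing only \cite[Lemma 3.2]{BMM_13}, so an elementary verification of this kind is genuinely useful. A small remark on (iii): the case $x=0$ forces $y=0$ (since $\abs{x-y}\le x/K$), where the claim is trivial, so the factor $(x(K-1)/K)^{\gamma-1}$ is well-defined everywhere it is needed; granted that, your mean-value-theorem computation closes exactly as written, and the cancellation of the powers of $x$ at the end is clean.

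Your worry about (iv) is more than a "delicate interpolation step": inequality \eqref{gevrey_inequality_4} is simply false as printed, so the interpolation you propose cannot succeed because there is no true statement to interpolate towards. Take $y=0$ and let $x\to\infty$; then $\jp{x+y}^\gamma/(\jp{x}^\gamma+\jp{y}^\gamma)=\jp{x}^\gamma/(\jp{x}^\gamma+1)\to 1$, so no constant $c<1$ can hold uniformly in $x,y\ge 0$. The normalization $\jp{\cdot}\ge 1$ does not rescue this: the counterexample already respects it. What is true, and what the paper actually uses, is the conditional version: if $x$ and $y$ are comparable, say $K^{-1}y\le x\le Ky$ for some fixed $K\ge 1$, then $\jp{x}$ and $\jp{y}$ are comparable with the same constant, the ratio $r:=\jp{y}/\jp{x}$ lies in the compact set $[K^{-1},K]$, and the continuous function $r\mapsto (1+r)^\gamma/(1+r^\gamma)$ is strictly less than $1$ on that set by strict concavity of $t\mapsto t^\gamma$; combined with your bound $\jp{x+y}\le\jp{x}+\jp{y}$ this gives a genuine $c(K,\gamma)<1$. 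This is exactly the regime in which \eqref{gevrey_inequality_4} is invoked in the paper: under the indicator $\mathbbm{1}_{\abs{\ell,\ell s}\approx\abs{k-\ell,kt-\ell s}}$ in the estimate of $T_{HH}$, and analogously for $\mathcal{T}_{HH}$ in Section~\ref{section_distrg}. So the missing ingredient in (iv) is not a normalization or an interpolation device but a comparability hypothesis that the lemma as printed omits.
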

In particular, the improvements to the triangle inequality for $\gamma < 1$ given in \eqref{gevrey_inequality_2}, \eqref{gevrey_inequality}, and \eqref{gevrey_inequality_4} are important for getting useful estimates. Indeed, note that in \eqref{gevrey_inequality}, we need $\gamma < 1$ to get $c<1$. This implies a gain of decay in the estimates that is absent if $\gamma=1$.
\begin{proof}[Proof of Proposition \ref{sec2:prop_estimate_S}]
    Recall that 
    \begin{align*}
        \widehat{S}_t(k) &:= \widehat{g_\infty}(k,kt) 
    - \widehat{h( U_t)}(k)
    + \sum_{\ell \in \mathbb{Z}_*^d}
    \int_t^{+\infty}  (s-t) \frac{k \cdot \ell}{\beta + \vert \ell \vert^2} \widehat{\varrho}_s(\ell)  \widehat{g}_s(k - \ell ,kt  - \ell s) \ ds.
    \end{align*}
By triangle inequality, we treat each term separately. 

\textbf{Estimate on $g_\infty$: linear term.} First, we show the bound on the asymptotic datum $g_\infty$. 
Using that $\int g_\infty(x,v) dx dv=0$, we get
\begin{align}
\label{sec2:estimate_ginf}
    \norm{ \langle
    t \rangle^b A_t(\nabla_x,t \nabla_x) g_{\infty}
    }_{L_t^2 L_x^2}  
    & = \int_0^{+\infty} \sum_{k\in \mathbb{Z}^d} \jp{t}^{2b} A^2_t(k,kt) |\widehat{g_\infty}(k,kt)|^2 dt \nonumber\\
    & \le
    \sum_{k\in \mathbb{Z}_*^d}  \int_0^{+\infty} \langle k,kt \rangle^{2b} A^2_t(k,kt) |\widehat{g_\infty}(k,kt)|^2 dt \nonumber\\
    &\le 
    \sum_{k\in \mathbb{Z}_*^d} \int_0^{+\infty} \left\langle k,t\frac{k}{|k|} \right \rangle^{2b}
    e^{2\lambda\left\langle k,\frac{k}{|k|}t\right\rangle^\gamma }
    \left\langle k,\frac{k}{|k|}t\right\rangle^{2\sigma}
    \left|\widehat{g_\infty}\left(k,\frac{k}{|k|}t\right)\right|^2 dt \nonumber\\
    & \le \sum_{k \in \mathbb{Z}^d_*} \left \| e^{\lambda \langle k,\eta\rangle^\gamma}\langle k, \eta \rangle^{\sigma+b} \widehat{g_\infty}(k, \eta) \right \|^2_{L^2_\eta(\mathbb{R} \frac{k}{|k|})}
    \nonumber \\
    &\le C \sum_{k \in \mathbb{Z}_*^d}
    \sum_{0 \le j \le m}
    \left \| D^j_\eta e^{\lambda \langle k,\eta\rangle^\gamma}\langle k, \eta \rangle^{\sigma+b} \widehat{g_\infty}(k, \eta) \right \|^2_{L^2_\eta(\mathbb{R}^d)}
    \nonumber\\
    &\le C \|\langle v \rangle^m e^{\lambda \langle \nabla_x,\nabla_v \rangle^\gamma}
    \langle \nabla_x, \nabla_v \rangle^{\sigma+b} g_\infty \|^2_{L^2_{x,v}},
\end{align}
where in the fourth inequality, we used the Sobolev trace lemma to bound the $L^2$ integral on the line.

\textbf{Estimate on $h( U_t)$.}
The $L_t^2 L_x^2$ norm of the second term is exactly 
\begin{align}
\label{sec2:estimate_h(U)}
    \left \Vert \jp{t}^b A_t(\nabla_x,t \nabla_x)
    h( U)
   \right \Vert_{L_t^2 L_x^2},
\end{align}
therefore, the challenging part is to treat the nonlinear contribution
\begin{align*}
    T_{NL} := \sum_{\ell \in \mathbb{Z}_*^d}
    \int_t^{+\infty}  (s-t) \frac{k \cdot \ell}{\beta + \vert \ell \vert^2} \widehat{\varrho}_s(\ell)  \widehat{g}_s(k - \ell ,kt  - \ell s) \ ds.
\end{align*}

\textbf{Estimate on $T_{NL}$: nonlinear term.}
The idea is to subdivide this term into different regions of frequency. We use the following splitting: 
\begin{align}
\label{slpitting}
    \mathbbm{1} & = 
    \mathbbm{1}_{\abs{\ell, \ell s} \le \frac{1}{2} \abs{k-\ell,kt-\ell s}} 
    + \mathbbm{1}_{\abs{k-\ell,kt-\ell s} \le \frac{1}{2} \abs{\ell, \ell s}}
    + \mathbbm{1}_{\abs{\ell, \ell s} \approx \abs{k-\ell,kt-\ell s}}.
\end{align}
Therefore, we can rewrite $T_{NL}$ as
\begin{align} 
\label{sec2:splitting_T_NL}
    \jp{t}^b A_t(k, kt) T_{NL} 
    & =
    \jp{t}^b A_t(k, kt) \sum_{\ell \in \bz_*^d} \int_t^{+\infty} \mathbbm{1}_{\abs{\ell, \ell s} \le \frac{1}{2} \abs{k-\ell,kt-\ell s}} \frac{\ell \cdot k}{\beta + \vert\ell\vert^2}  (s-t) \widehat{\varrho}_s (\ell) \widehat{g}_s (k-\ell,kt- \ell s) \ d s \nonumber \\
    &\quad +\jp{t}^b A_t(k, kt) \sum_{\ell \in \bz_*^d} \int_t^{+\infty} \mathbbm{1}_{\abs{k-\ell,kt-\ell s} 
    \le \frac{1}{2} \abs{\ell, \ell s}}  \frac{\ell \cdot k}{\beta +\vert\ell\vert^2}  (s-t) \widehat{\varrho}_s (\ell) \widehat{g}_s (k-\ell,kt- \ell s) \ d s \nonumber \\
    &\quad + \jp{t}^b A_t(k, kt) \sum_{\ell \in \bz_*^d} \int_t^{+\infty} \mathbbm{1}_{\abs{\ell, \ell s} \approx \abs{k-\ell,kt-\ell s}} \frac{\ell \cdot k}{\beta +\vert\ell\vert^2}  (s-t) \widehat{\varrho}_s (\ell) \widehat{g}_s (k-\ell,kt- \ell s) \ d s \nonumber \\
    & =:T_{LH}+  T_{HL}  + T_{HH}.
\end{align}

\emph{Nonlinear estimate on $T_{LH}$:}
We first estimate $T_{LH}$. In this region $\abs{\ell, \ell s} \le \frac{1}{2} \abs{k-\ell,kt-\ell s}$, we have 
\begin{align*}
    \jp{k,kt} \leq \jp{\ell, \ell s} + \jp{k-\ell, kt - \ell s} \leq 2 \jp{k-\ell, kt - \ell s}.
\end{align*}
Therefore,
\begin{align}
\label{ineq_A_LH}
    A_t(k, kt) & \leq 2^\sigma e^{-(\lambda (s) - \lambda(t)) \jp{k, kt}^\gamma} e^{ \lambda (s) \jp{k, kt}^\gamma} \jp{k-\ell, kt - \ell s}^\sigma \nonumber \\
    & \leq 2^\sigma e^{-(\lambda (s) - \lambda(t)) \jp{k, kt}^\gamma} e^{c \lambda (s) \jp{\ell , \ell s}^\gamma} e^{\lambda(s) \jp{k-\ell, kt - \ell s}^\gamma} \jp{k-\ell, kt - \ell s}^\sigma,
\end{align}
where we used \eqref{gevrey_inequality} to get
\begin{align*}
    \abs{\jp{k, kt}^\gamma - \jp{k-\ell, kt - \ell s}^{\gamma}} \leq c \jp{\ell , \ell s}^\gamma,
\end{align*}
with $c < 1$.
Thus, using \eqref{ineq_A_LH}, we get
\begin{align*}
     \abs{T_{LH}} & \leq \jp{t}^b A_t(k, kt) \sum_{\ell \in \bz_*^d} \int_t^{+\infty} \mathbbm{1}_{\abs{\ell, \ell s} \le \frac{1}{2} \abs{k-\ell,kt-\ell s}}  \frac{\abs{\ell \cdot k}}{ \beta + \vert\ell\vert^2}  (s-t) \abs{\widehat{\varrho}_s (\ell)} \abs{\widehat{g}_s (k-\ell,kt- \ell s)} \ d s \\
    & \leq C \sum_{\ell \in \bz_*^d} \int_t^{+\infty} \mathbbm{1}_{\abs{\ell, \ell s} \le \frac{1}{2} \abs{k-\ell,kt-\ell s}} \jp{t}^b e^{-(\lambda (s) - \lambda(t)) \jp{k, kt}^\gamma} e^{c \lambda (s) \jp{\ell , \ell s}^\gamma}  \\
    & \qquad \times \frac{\abs{\ell \cdot k}}{ \beta + \vert\ell\vert^2}  (s-t) \abs{\widehat{\varrho}_s (\ell)} \abs{\widehat{Ag}_s (k-\ell,kt- \ell s)} \ d s.
\end{align*}
In this case, we can bound $e^{-(\lambda (s) - \lambda(t)) \jp{k, kt}^\gamma} \leq 1$, therefore
\begin{align*}
     \abs{T_{LH}} 
    & \leq C \sum_{\ell \in \bz_*^d} \int_t^{+\infty} \mathbbm{1}_{\abs{\ell, \ell s} \le \frac{1}{2} \abs{k-\ell,kt-\ell s}} \jp{t}^b e^{c \lambda (s) \jp{\ell , \ell s}^\gamma}  \\
    & \qquad \times \frac{\abs{\ell \cdot k}}{ \beta + \vert\ell\vert^2}  (s-t) \abs{\widehat{\varrho}_s (\ell)} \abs{\widehat{Ag}_s (k-\ell,kt- \ell s)} \ d s.
\end{align*}
Thus, adding the weight $A$ to $\varrho$ and using that
\begin{align}
\label{sec2:ineq_s-t}
    \abs{k} (s-t) = \abs{ks - \ell s + \ell s - kt} \leq  \jp{s} \abs{k - \ell} + \abs{\ell s - kt} \leq \jp{s} \jp{k-\ell, kt-\ell s},
\end{align}
we have
\begin{align*}
     \abs{T_{LH}} 
    & \leq C \sum_{\ell \in \bz_*^d} \int_t^{+\infty} \mathbbm{1}_{\abs{\ell, \ell s} \le \frac{1}{2} \abs{k-\ell,kt-\ell s}}  \frac{\jp{t}^b}{\jp{\ell, \ell s}^\sigma} e^{-(1-c) \lambda (s) \jp{\ell , \ell s}^\gamma}  \\
    & \qquad \times \frac{\jp{s}}{\vert\ell\vert}  \abs{\widehat{A\varrho}_s (\ell)} \jp{k-\ell, kt-\ell s} \abs{\widehat{Ag}_s (k-\ell,kt- \ell s)} \ d s \\
    & \leq C \sum_{\ell \in \bz_*^d} \int_t^{+\infty} \mathbbm{1}_{\abs{\ell, \ell s} \le \frac{1}{2} \abs{k-\ell,kt-\ell s}}  \frac{\jp{s}^{b+1}}{\jp{\ell, \ell s}^\sigma} e^{-(1-c) \lambda (s) \jp{\ell , \ell s}^\gamma}  \abs{\widehat{A\varrho}_s (\ell)} \abs{\widehat{Bg}_s (k-\ell,kt- \ell s)} \ d s.
\end{align*}
Therefore, taking the $L_t^2 L_x^2$ norm and using Cauchy-Schwarz in $\ell$ and $s$, we get
\begin{align*}
    & \norm{T_{LH}}_{L_t^2 L_x^2}^2 \\
    & \leq C \int_0^{+\infty} \sum_{k \in \bz^d} \left(  \sum_{\ell \in \bz_*^d} \int_t^{+\infty} \mathbbm{1}_{\abs{\ell, \ell s} \le \frac{1}{2} \abs{k-\ell,kt-\ell s}}  \frac{\jp{s}^{b+1}}{\jp{\ell, \ell s}^\sigma} e^{-(1-c) \lambda (s) \jp{\ell , \ell s}^\gamma}  \abs{\widehat{A\varrho}_s (\ell)} \abs{\widehat{Bg}_s (k-\ell,kt- \ell s)} \ d s \right)^2 \ dt \\
    & \leq C \int_0^{+\infty} \sum_{k \in \bz^d} \left(  \sum_{\ell \in \bz_*^d} \int_t^{+\infty} \mathbbm{1}_{\abs{\ell, \ell s} \le \frac{1}{2} \abs{k-\ell,kt-\ell s}}  \frac{\jp{s}^{b+1}}{\jp{\ell, \ell s}^\sigma} e^{-(1-c) \lambda (s) \jp{\ell , \ell s}^\gamma}  \abs{\widehat{A\varrho}_s (\ell)}  \ d s \right) \\
    & \qquad \times \left(\sum_{\ell \in \bz_*^d} \int_t^{+\infty} \mathbbm{1}_{\abs{\ell, \ell s} \le \frac{1}{2} \abs{k-\ell,kt-\ell s}}  \frac{\jp{s}^{b+1}}{\jp{\ell, \ell s}^\sigma} e^{-(1-c) \lambda (s) \jp{\ell , \ell s}^\gamma}  \abs{\widehat{A\varrho}_s (\ell)} \abs{\widehat{Bg}_s (k-\ell,kt- \ell s)}^2 \ d s \right) \ dt.
\end{align*}
We treat the first parenthesis using Cauchy-Schwarz again
\begin{align*}
    & \sum_{\ell \in \bz_*^d} \int_t^{+\infty} \mathbbm{1}_{\abs{\ell, \ell s} \le \frac{1}{2} \abs{k-\ell,kt-\ell s}}  \frac{\jp{s}^{b+1}}{\jp{\ell, \ell s}^\sigma} e^{-(1-c) \lambda (s) \jp{\ell , \ell s}^\gamma}  \abs{\widehat{A\varrho}_s (\ell)}  \ d s \\
    & \leq \left(\sum_{\ell \in \bz_*^d} \int_t^{+\infty} \jp{s}^{2b} \abs{\widehat{A\varrho}_s (\ell)}^2  \ d s \right)^\frac{1}{2}
    \left(  \sum_{\ell \in \bz_*^d} \int_t^{+\infty}   \frac{\jp{s}^{2}}{\jp{\ell, \ell s}^{2\sigma}} e^{-2(1-c) \lambda (s) \jp{\ell , \ell s}^\gamma}  \ d s  \right)^\frac{1}{2} \\
    & \leq \left( \int_0^{+\infty} \sum_{\ell \in \bz_*^d} \jp{s}^{2b}  \abs{\widehat{A\varrho}_s (\ell)}^2  \ d s \right)^\frac{1}{2}
    \left(  \sum_{\ell \in \bz_*^d} \frac{1}{\jp{\ell}^{2\sigma}} \int_t^{+\infty}  \jp{s}^{2}  e^{-2(1-c) \lambda(0) \jp{s}^\gamma}  \ d s  \right)^\frac{1}{2} \\
    & \leq C \norm{\jp{t}^{b} A_t (\nabla_x, t \nabla_x) \varrho_t}_{L_t^2 L_x^2},
\end{align*}
where we used that the second term is bounded by a constant if $\sigma>d$ and recall $\lambda(s)>\lambda(0)$.Thus, by Fubini’s Theorem 
\begin{align*}
     \norm{T_{LH}}_{L_t^2 L_x^2}^2
    & \leq C \norm{\jp{t}^{b} A_t (\nabla_x, t \nabla_x) \varrho_t}_{L_t^2 L_x^2} \\
    &\quad \times \int_0^{+\infty} \int_t^{+\infty} \sum_{\ell \in \bz_*^d} \sum_{k \in \bz^d}     \frac{\jp{s}^{b+1}}{\jp{\ell, \ell s}^\sigma} e^{-(1-c) \lambda (s) \jp{\ell , \ell s}^\gamma}  \abs{\widehat{A\varrho}_s (\ell)} \abs{\widehat{Bg}_s (k-\ell,kt- \ell s)}^2 \ ds dt \\
    & \leq C \norm{\jp{t}^{b} A_t (\nabla_x, t \nabla_x) \varrho_t}_{L_t^2 L_x^2} \\
    &\quad \times \int_0^{+\infty} \int_t^{\infty} \sum_{\ell \in \bz_*^d} \abs{\widehat{A\varrho}_s (\ell)}    \frac{\jp{s}^{b+1}}{\jp{\ell, \ell s}^\sigma} e^{-(1-c) \lambda (s) \jp{\ell , \ell s}^\gamma}  \sum_{k \in \bz^d} \sup_{\eta}\abs{\widehat{Bg}_s (k,\eta)}^2 \ ds dt \\
    & \leq C \norm{\jp{t}^{b} A_t (\nabla_x, t \nabla_x) \varrho_t}_{L_t^2 L_x^2} \sup_{t \in \br_+} \norm{\jp{v}^M B_t (\nabla_x, \nabla_v)g_t}_{L_{x,v}^2}^2 \\
    &\quad \times \int_0^{+\infty} \int_t^{\infty} \sum_{\ell \in \bz_*^d} \abs{\widehat{A\varrho}_s (\ell)}    \frac{\jp{s}^{b+1}}{\jp{\ell, \ell s}^\sigma} e^{-(1-c) \lambda (s) \jp{\ell , \ell s}^\gamma}  \ ds dt.
\end{align*}
The bound, in the penultimate inequality on $\widehat{Bg}$, is obtained using the $L^\infty$ Sobolev embedding with $M>d$,
\begin{align}
\label{Bg_Sobolev_embedding}
    \sum_{k \in \bz^d} \sup_{\eta \in \br^d} \abs{\widehat{Bg}_s(k, \eta)}^2 
    &  \leq \sum_{k \in \bz^d}  \sum_{\vert j \vert \leq M} \left\Vert \partial_\eta^j \big(  \widehat{Bg}_s (k,\eta) \big)\right\Vert_{L_\eta^2}^2 
    = \left \Vert \jp{v}^M  B_s(\nabla_x,\nabla_v) g_s
   \right \Vert_{L_{x,v}^2}^2.
\end{align}
By using that $\lambda(s)>\lambda(0)$ and $s>t$, we get
\begin{align*}
     \norm{T_{LH}}_{L_t^2 L_x^2}^2 
    & \leq C \norm{\jp{t}^b A_t (\nabla_x, t \nabla_x) \varrho_t}_{L_t^2 L_x^2} \sup_{t \in \br_+} \norm{\jp{v}^M B_t (\nabla_x, \nabla_v)g_t}_{L_{x,v}^2}^2 \\
    &\quad \times \int_0^{+\infty} e^{-\frac{(1-c)}{2} \lambda(0) \jp{t}^\gamma} \int_t^{\infty} \sum_{\ell \in \bz_*^d} \abs{\widehat{A\varrho}_s (\ell)}    \frac{\jp{s}^{b+1}}{\jp{\ell, \ell s}^\sigma} e^{-\frac{(1-c)}{2} \lambda (s) \jp{\ell , \ell s}^\gamma}  \ ds dt.
\end{align*}
Then, by Cauchy-Schwarz in $s$ and $\ell$ we have
\begin{align}
\label{sec2:estimate_T_LH}
     \norm{T_{LH}}_{L_t^2 L_x^2}^2 
    & \leq C \norm{\jp{t}^b A_t (\nabla_x, t \nabla_x) \varrho_t}_{L_t^2 L_x^2} \sup_{t \in \br_+} \norm{\jp{v}^M B_t (\nabla_x, \nabla_v)g_t}_{L_{x,v}^2}^2 \nonumber \\
    &\quad \times \int_0^{+\infty} e^{-\frac{(1-c)}{2} \lambda(0) \jp{t}^\gamma} \left( \int_t^{\infty} \sum_{\ell \in \bz_*^d} \jp{s}^{2b} \abs{\widehat{A\varrho}_s (\ell)}^2 \ ds \right)^{\frac{1}{2}} \left(  \int_t^{\infty} \sum_{\ell \in \bz_*^d}   \frac{\jp{s}^{2}}{\jp{\ell, \ell s}^{2\sigma}} e^{-(1-c) \lambda (s) \jp{\ell , \ell s}^\gamma}  \ ds \right)^\frac{1}{2} dt \nonumber \\
    & \leq C \norm{\jp{t}^b A_t (\nabla_x, t \nabla_x) \varrho_t}_{L_t^2 L_x^2}^2 \sup_{t \in \br_+} \norm{\jp{v}^M B_t (\nabla_x, \nabla_v)g_t}_{L_{x,v}^2}^2,
\end{align}
where we used that the integral in $s$ and sum over $\ell$ is finite for $\sigma>d$, and the integral in time is bounded thanks to the exponential decay in $t$.

\emph{Nonlinear estimate on $T_{HL}$:} We now turn to the analysis of the term $T_{HL}$ in \eqref{sec2:splitting_T_NL}.
\begin{align*}
    T_{HL} := \jp{t}^b A_t(k, kt) \sum_{\ell \in \bz_*^d} \int_t^{+\infty} \mathbbm{1}_{\abs{k-\ell,kt-\ell s} 
    \le \frac{1}{2} \abs{\ell, \ell s}}  \frac{\ell \cdot k}{\beta + \vert\ell\vert^2}  (s-t) \widehat{\varrho}_s (\ell) \widehat{g}_s (k-\ell,kt- \ell s) \ d s.
\end{align*}
In this region $\abs{k-\ell,kt-\ell s} \le \frac{1}{2} \abs{\ell, \ell s}$, we have 
\begin{align*}
    \jp{k,kt} \leq \jp{\ell, \ell s} + \jp{k-\ell, kt - \ell s} \leq 2 \jp{\ell, \ell s}.
\end{align*}
Therefore,
\begin{align}
\label{ineq_A_HL}
    A_t(k, kt) & \leq 2^\sigma e^{-(\lambda (s) - \lambda(t)) \jp{k, kt}^\gamma} e^{ \lambda (s) \jp{k, kt}^\gamma} \jp{k-\ell, kt - \ell s}^\sigma \nonumber \\
    & \leq 2^\sigma e^{-(\lambda (s) - \lambda(t)) \jp{k, kt}^\gamma} e^{c \lambda (s) \jp{ k-\ell , kt-\ell s }^\gamma} e^{\lambda(s) \jp{\ell, \ell s}^\gamma} \jp{\ell,  \ell s}^\sigma,
\end{align}
where we used \eqref{gevrey_inequality} to get
\begin{align*}
    \abs{\jp{k, kt}^\gamma - \jp{\ell , \ell s}^\gamma} \leq c \jp{k-\ell, kt - \ell s}^{\gamma} ,
\end{align*}
with $c < 1$. Hence, by \eqref{ineq_A_HL}
\begin{align*}
    \abs{T_{HL}} & \leq  C \sum_{\ell \in \bz_*^d} \int_t^{+\infty} \mathbbm{1}_{\abs{k-\ell,kt-\ell s} 
    \le \frac{1}{2} \abs{\ell, \ell s}} \jp{t}^b e^{-(\lambda (s) - \lambda(t)) \jp{k, kt}^\gamma} e^{c \lambda (s) \jp{ k-\ell , kt-\ell s }^\gamma}  \\
    & \quad \times\frac{\abs{\ell \cdot k}}{ \beta + \vert\ell\vert^2}  (s-t) \abs{\widehat{A \varrho}_s (\ell)} \abs{\widehat{g}_s (k-\ell,kt- \ell s)} \ d s \\
    & \leq  C \sum_{\ell \in \bz_*^d} \int_t^{+\infty} \mathbbm{1}_{\abs{k-\ell,kt-\ell s} 
    \le \frac{1}{2} \abs{\ell, \ell s}} \jp{t}^b e^{-(\lambda (s) - \lambda(t)) \jp{k, kt}^\gamma} \frac{e^{-(1-c) \lambda (s) \jp{ k-\ell , kt-\ell s }^\gamma}}{\jp{k-\ell,kt-\ell s}^{\sigma+1}}  \\
    & \quad \times\frac{\abs{\ell \cdot k}}{ \beta + \vert\ell\vert^2}  (s-t) \abs{\widehat{A \varrho}_s (\ell)} \abs{\widehat{Bg}_s (k-\ell,kt- \ell s)} \ ds.
\end{align*}
Next, similarly to \eqref{Bg_Sobolev_embedding}, we have
\begin{align*}
     \abs{\widehat{Bg}_s (k-\ell,kt- \ell s)} \leq \sum_{\ell \in \bz_*^d} \sup_{\eta \in \br^d} \abs{\widehat{Bg}_s(k - \ell, \eta)}
 & \leq \left \Vert \jp{v}^M  B_s(\nabla_x,\nabla_v) g_s
   \right \Vert_{L_{x,v}^2}.
\end{align*}
Therefore,
\begin{align*}
    \abs{T_{HL}} 
    & \leq C \sup_{t \in \br^+}\left \Vert \jp{v}^M  B_t(\nabla_x,\nabla_v) g_t
   \right \Vert_{L_{x,v}^2}   \sum_{\ell \in \bz_*^d} \int_t^{+\infty}  \mathcal{K}(t,s,k,\ell)  \jp{s}^b \abs{\widehat{A \varrho}_s (\ell)}  \ ds,
\end{align*}
where
\begin{align*}
    \mathcal{K}(t,s,k,\ell) := \frac{\jp{t}^b}{\jp{s}^b} \frac{\abs{\ell \cdot k}}{ \beta + \vert\ell\vert^2}  (s-t) e^{-(\lambda (s) - \lambda(t))\jp{k, kt}^\gamma } \frac{e^{-(1-c) \lambda (s) \jp{ k-\ell , kt-\ell s }^\gamma}}{\jp{k-\ell,kt-\ell s}^{\sigma+1}}\mathbbm{1}_{\abs{k-\ell,kt-\ell s} 
    \le \frac{1}{2} \abs{\ell, \ell s}}.
\end{align*}
By taking the $L_t^2L_x^2$ norm, we get by Cauchy-Schwarz in $s$ and $\ell$
\begin{align*}
    \norm{T_{HL}}_{L_t^2L_x^2}^2 
    & \leq C \sup_{t \in \br^+}\left \Vert \jp{v}^M  B_t(\nabla_x,\nabla_v) g_t
   \right \Vert_{L_{x,v}^2}^2 \int_0^{+\infty} \sum_{k \in \bz^d} \left( \sum_{\ell \in \bz_*^d} \int_t^{+\infty}  \mathcal{K}(t,s,k,\ell) \jp{s}^b \abs{\widehat{A \varrho}_s (\ell)}  \ ds \right)^2 dt \\
   & \leq C \sup_{t \in \br^+}\left \Vert \jp{v}^M  B_t(\nabla_x,\nabla_v) g_t
   \right \Vert_{L_{x,v}^2}^2 \\
   & \quad \times \int_0^{+\infty} \sum_{k \in \bz^d} \left( \sum_{\ell \in \bz_*^d} \int_t^{+\infty}  \mathcal{K}(t,s,k,\ell) \jp{s}^{2b} \abs{\widehat{A \varrho}_s (\ell)}^2  \ ds \right) \left( \sum_{\ell \in \bz_*^d} \int_t^{+\infty}  \mathcal{K}(t,s,k,\ell)   \ ds \right) dt.
\end{align*}
Then, by Fubini’s Theorem
\begin{align}
\label{sec2:estimate_T_HL}
    \norm{T_{HL}}_{L_t^2L_x^2}^2 
   & \leq C \sup_{t \in \br^+}\left \Vert \jp{v}^M  B_t(\nabla_x,\nabla_v) g_t
   \right \Vert_{L_{x,v}^2}^2 \sup_{k,t} \left( \sum_{\ell \in \bz_*^d} \int_t^{+\infty}  \mathcal{K}(t,s,k,\ell)   \ ds \right) \nonumber \\
   & \quad \times \int_0^{+\infty} \sum_{k \in \bz^d} \left( \sum_{\ell \in \bz_*^d} \int_t^{+\infty}  \mathcal{K}(t,s,k,\ell) \jp{s}^b \abs{\widehat{A \varrho}_s (\ell)}^2  \ ds \right)  dt \nonumber \\
   & \leq C \sup_{t \in \br^+}\left \Vert \jp{v}^M  B_t(\nabla_x,\nabla_v) g_t
   \right \Vert_{L_{x,v}^2}^2 \sup_{k,t} \left( \sum_{\ell \in \bz_*^d} \int_t^{+\infty}  \mathcal{K}(t,s,k,\ell)   \ ds \right) \nonumber \\
   & \quad \times \int_0^{+\infty} \sum_{\ell \in \bz_*^d} \jp{s}^{2b} \abs{\widehat{A \varrho}_s (\ell)}^2 \left( \sum_{k \in \bz^d} \int_0^{s}  \mathcal{K}(t,s,k,\ell)    \ dt \right)  ds \nonumber \\
   & \leq C \norm{\jp{t}^b A_t  (\nabla_x, t\nabla_x) \varrho_t}_{L_t^2L_x^2}^2 \sup_{t \in \br^+}\left \Vert \jp{v}^M  B_t(\nabla_x,\nabla_v) g_t
   \right \Vert_{L_{x,v}^2}^2 \nonumber \\
   & \quad \times \sup_{k,t} \left( \sum_{\ell \in \bz_*^d} \int_t^{+\infty}  \mathcal{K}(t,s,k,\ell)   \ ds \right)
   \sup_{s,\ell} \left( \sum_{k \in \bz_*^d} \int_0^{s}  \mathcal{K}(t,s,k,\ell)    \ dt \right).
\end{align}
Therefore, the analysis of the term $T_{HL}$ reduces to estimate the two norms of this kernel $\mathcal{K}$. Let us start with the first one. Note that we have for all time $t>s$,
\begin{align}
\label{ineq_lambda}
    \lambda(s) - \lambda(t) = \frac{C}{\jp{t}^\delta} - \frac{C}{\jp{s}^\delta}  \geq C \frac{\abs{s-t}}{\jp{s} \jp{t}^\delta}.
\end{align}
This implies we get enough exponential decay when $s \geq 2t$. Hence, we split the time integral into two regions: $s \leq 2t$ and $s \geq 2t$. Therefore, for 
$s \geq 2t$ we get from \eqref{ineq_lambda}
\begin{align}
\label{ineq_lambda_improved}
    \lambda(s) - \lambda(t) \geq \lambda(2t) - \lambda(t) \geq C \frac{\abs{t}}{\jp{2t} \jp{t}^\delta} \geq C \jp{s}^{-\delta}.
\end{align}
Therefore,
\begin{align*}
    \mathcal{I}_1 := & \sup_{k,t} \left( \sum_{\ell \in \bz_*^d} \int_{2t}^{+\infty}  \mathcal{K}(t,s,k,\ell)   \ ds \right) \\
    &\leq \sup_{k,t} \left( \sum_{\ell \in \bz_*^d} \int_{2t}^{+\infty}  \frac{\jp{t}^b}{\jp{s}^b} \frac{\abs{\ell \cdot k}}{ \beta + \vert\ell\vert^2}  (s-t) e^{- \jp{s}^{-\delta} \jp{k, kt}^\gamma } \frac{e^{-(1-c) \lambda (s) \jp{ k-\ell , kt-\ell s }^\gamma}}{\jp{k-\ell,kt-\ell s}^{\sigma+1}}\mathbbm{1}_{\abs{k-\ell,kt-\ell s} 
    \le \frac{1}{2} \abs{\ell, \ell s}}   \ ds \right).
\end{align*}
In the region $\abs{k-\ell,kt-\ell s} 
    \le \frac{1}{2} \abs{\ell, \ell s}$, we have
\begin{align}
\label{inequality_l_and_k}
    \jp{\ell,\ell s} \leq \jp{k,kt} + \jp{k-\ell,kt-\ell s} \leq  \jp{k,kt} + \frac{1}{2} \jp{\ell,\ell s} \leq 2 \jp{k,kt}.
\end{align}
Hence, by \eqref{sec2:ineq_s-t} and \eqref{inequality_l_and_k}, we get
\begin{align*}
    \mathcal{I}_1
    &\leq  \sup_{k,t} \left( \sum_{\ell \in \bz_*^d} \int_{2t}^{+\infty}  \frac{\jp{t}^b}{\jp{s}^b} \jp{s}  e^{- C \jp{s}^{-\delta} \jp{\ell, \ell s}^\gamma } \frac{e^{-(1-c) \lambda (s) \jp{ k-\ell , kt-\ell s }^\gamma}}{\jp{k-\ell,kt-\ell s}^{\sigma}} \ ds \right) \\
    & \leq \sup_{k,t} \left( \sum_{\ell \in \bz_*^d} \frac{1}{\jp{k-\ell}^{\sigma}} \int_{2t}^{+\infty}   \jp{s}  e^{- C \jp{s}^{\gamma -\delta} }  \ ds \right) \\
    & \leq C \sup_{k,t} \left( \sum_{\ell \in \bz_*^d} \frac{1}{\jp{k-\ell}^{\sigma}}  \right) \leq C.
\end{align*}
Now, let us treat the case $s \leq 2t$. 
\begin{align*}
    \mathcal{I}_2 := & \sup_{k,t} \left( \sum_{\ell \in \bz_*^d} \int_{t}^{2t} \mathcal{K}(t,s,k,\ell)   \ ds \right).
\end{align*}
If $\ell=k$, we have 
\begin{align*}
   \sup_{k,t} \left( \int_{t}^{2t} \mathcal{K}(t,s,k,k)   \ ds \right) 
   & \leq \sup_{k,t} \left( \int_{t}^{2t}  (s-t) e^{-(\lambda (s) - \lambda(t))\jp{k, kt}^\gamma } \frac{e^{-(1-c) \lambda (s) \jp{ k(t-s) }^\gamma}}{\jp{k(t-s)}^\sigma}  \ ds \right) \\
   & \leq \sup_{k,t} \left( \int_{t}^{2t}  e^{-(1-c) \lambda (s) \jp{ k(t-s) }^\gamma}  \ ds \right) \leq C.
\end{align*}
Therefore, we only have to look at the sum over $\ell$ when $\ell \neq k$. Now, we split again the support of the integral between $\abs{kt -\ell s} \leq \frac{1}{100}\abs{kt}$ and $\abs{kt -\ell s} \geq \frac{1}{100}\abs{kt}$.
Let us first treat the easy case $\abs{kt -\ell s} \geq \frac{1}{100}\abs{kt}$, where we can get enough exponential decay. Since we have $s \in (t,2t)$ we get $\abs{kt -\ell s} \geq \frac{1}{200}\abs{ks}$. Therefore, using \eqref{sec2:ineq_s-t} we have
\begin{align*}
    \mathcal{I}_2 & \leq  C \sup_{k,t} \left( \sum_{\ell \in \bz_*^d, \ell \neq k} \int_{t}^{2t} \frac{\jp{t}^b}{\jp{s}^b} \frac{\jp{s}}{\abs{\ell}}  e^{-(\lambda (s) - \lambda(t))\jp{k, kt}^\gamma } \frac{e^{-(1-c) \lambda (s) \jp{ k-\ell , kt-\ell s }^\gamma}}{\jp{k-\ell,kt-\ell s}^{\sigma}}  \ ds \right) \\
     & \leq  C \sup_{k,t} \left( \sum_{\ell \in \bz_*^d, \ell \neq k} \int_{t}^{2t} \frac{\jp{s}}{\abs{\ell}} \frac{e^{-C \abs{kt-\ell s }^\gamma}}{\jp{k-\ell}^{\sigma}}   \ ds \right) \\
     & \leq  C \sup_{k,t} \left( \sum_{\ell \in \bz_*^d, \ell \neq k} \frac{1}{\jp{k - \ell}^{\sigma}}\int_{t}^{2t} \jp{s} e^{-\frac{C}{200} \abs{ks }^\gamma}  \ ds \right) \leq C.
\end{align*}
The second case $\abs{kt -\ell s} \leq \frac{1}{100}\abs{kt}$ is a bit more challenging, and it is the part of the proof where we see the requirement on the Gevrey regularity assumption $\gamma \in \left(\frac{1}{3},1 \right)$. Again using \eqref{sec2:ineq_s-t} we have
\begin{align*}
    \mathcal{I}_2 
     & \leq  C \sup_{k,t} \left( \sum_{\ell \in \bz_*^d, \ell \neq k} \int_{t}^{2t} \frac{\jp{t}^b}{\jp{s}^b} \frac{\jp{s}}{\abs{\ell}} e^{-(\lambda (s) - \lambda(t))\jp{k, kt}^\gamma } \frac{e^{-C \jp{ k-\ell , kt-\ell s }^\gamma}}{\jp{k-\ell,kt-\ell s}^{\sigma}}   \ ds \right),
\end{align*}
where we used that $(1-c) \lambda (s) \geq C$ for some constant $C$. Using Gevrey inequality \eqref{triangle_inequality_Gevrey} and following the argument \cite[page 62]{BMM_13}, we have 
\begin{align*}
    \mathcal{I}_2 
     & \leq  C \sup_{k,t} \left( \sum_{\ell \in \bz_*^d} \sum_{j: \ell_j \neq k_j}  \int_{t}^{2t} \frac{\jp{s}}{\abs{\ell}} e^{-(\lambda (s) - \lambda(t))\jp{k_j, k_jt}^\gamma } \frac{e^{-C \jp{ k_j-\ell_j , k_jt-\ell_j s }^\gamma}}{\jp{k_j-\ell_j,k_jt-\ell_j s}^{\sigma}}  \prod_{i\neq j}^d e^{-C \jp{ k_i-\ell_i }^\gamma}   \ ds \right) \\
     & \leq  C \sup_{k,t} \left( \sum_{j=1}^d  \sum_{\ell_j \in \bz_*} \int_{t}^{2t}  \frac{\jp{s}}{\abs{\ell_j}} e^{-(\lambda (s) - \lambda(t))\jp{k_j, k_jt}^\gamma } \frac{e^{-C \jp{ k_j-\ell_j , k_jt-\ell_j s }^\gamma}}{\jp{k_j-\ell_j,k_jt-\ell_j s}^{\sigma}}    \ ds \right).
\end{align*}
Now that we have reduced the estimates in dimension one and since we are in the case 
$\abs{kt -\ell s} \leq \frac{1}{100}\abs{kt}$ we have $s \approx \frac{kt}{\ell}$, (note since $s>t$ we have $\abs{k}>\abs{\ell}$), then 
\begin{align}
\label{sec2:approx_s-t}
    \abs{s-t} \approx \frac{kt}{\ell} - t = \frac{t}{\ell} (k-\ell) \geq \frac{t}{\ell}.
\end{align}
Therefore, by \eqref{ineq_lambda}, we have
\begin{align}
\label{ineq_lambda_improved_2}
    \lambda(s) - \lambda(t)  \geq C \frac{\abs{t-s}}{\jp{s} \jp{t}^\delta} \geq \frac{\jp{t}^{1-\delta}}{\jp{s}} \frac{1}{\abs{\ell}}.
\end{align}
Hence, for some $\alpha>0$, we get
\begin{align*}
    \mathcal{I}_2 
     & \leq  C \sup_{k,t} \left( \sum_{j=1}^d  \sum_{\ell_j \in \bz_*} \int_{t}^{2t}  \frac{\jp{s}}{\abs{\ell_j}} e^{-\frac{\jp{t}^{1-\delta}}{\jp{s}} \frac{1}{\abs{\ell_j}}\jp{k_j, k_jt}^\gamma } \frac{e^{-C \jp{ k_j-\ell_j , k_jt-\ell_j s }^\gamma}}{\jp{k_j-\ell_j,k_jt-\ell_j s}^{\sigma}}    \ ds \right) \\
     & \leq  C \sup_{k,t} \left( \sum_{j=1}^d  \sum_{\ell_j \in \bz_*} \int_{t}^{2t}  \frac{\jp{s}}{\abs{\ell_j}} \frac{\jp{s}^\alpha  \abs{\ell_j}^{\alpha}}{\jp{t}^{(1-\delta)\alpha} \jp{k_j, k_jt}^{\gamma \alpha}}  \frac{e^{-C \jp{ k_j-\ell_j , k_jt-\ell_j s }^\gamma}}{\jp{k_j-\ell_j,k_jt-\ell_j s}^{\sigma}}    \ ds \right) \\
     & \leq  C \sup_{k,t} \left( \sum_{j=1}^d  \sum_{\ell_j \in \bz_*} \int_{t}^{2t}  \jp{t}^{1+\alpha - (1-\delta)\alpha - \gamma \alpha} \abs{\ell_j}^{\alpha -1 - \gamma \alpha}  \frac{e^{-C \jp{ k_j-\ell_j , k_jt-\ell_j s }^\gamma}}{\jp{k_j-\ell_j,k_jt-\ell_j s}^{\sigma}}    \ ds \right),
\end{align*}
where we used \eqref{inequality_l_and_k}, $\jp{\ell_j, \ell_j s} \geq \abs{ \ell_j s}$, and $s \in (t, 2t)$.
Then we choose $\alpha$ such that $1+\alpha - (1-\delta)\alpha - \gamma \alpha=0$, i.e.,
\begin{align*}
    \alpha = \frac{1}{\gamma - \delta}.
\end{align*}
Next, we do a change of variable $\tau = \ell_j s$. Therefore
\begin{align*}
    \mathcal{I}_2 
     & \leq  C \sup_{k,t} \left( \sum_{j=1}^d  \sum_{\ell_j \in \bz_*} \int_{0}^{+ \infty}   \abs{\ell_j}^{\alpha -1 - \gamma \alpha} \frac{1}{\abs{\ell_j}} \frac{e^{-C \jp{ k_j-\ell_j , k_jt-\tau }^\gamma}}{\jp{k_j-\ell_j,k_jt-\tau}^{\sigma}}    \ d \tau \right),
\end{align*}
In order to be able to sum over $\ell$, we need $\alpha-1 - \gamma \alpha < 1$. Therefore,
\begin{align*}
    \frac{1}{\gamma - \delta} - 1 - \frac{\gamma}{\gamma - \delta} < 1 \quad \implies \frac{1 + 2\delta}{3} < \gamma.
\end{align*}
This inequality gives us the Gevrey regularity threshold that we have to impose. That is $ \gamma \in \left(\frac{1}{3},1 \right)$. Finally,
\begin{align*}
    \mathcal{I}_2 
     & \leq  C \sup_{k,t} \left( \sum_{j=1}^d  \sum_{\ell_j \in \bz_*} \int_{0}^{+ \infty}  \frac{e^{-C \jp{ k_j-\ell_j , k_jt-\tau }^\gamma}}{\jp{k_j-\ell_j,k_jt-\tau}^{\sigma}}    \ d \tau \right) \\
     & \leq  C \sup_{k,t} \left( \sum_{j=1}^d  \sum_{\ell_j \in \bz_*} \jp{k_j-\ell_j}^{\sigma} \int_{0}^{+ \infty}  e^{-C \jp{k_jt-\tau }^\gamma}     \ d \tau \right) \\
     & \leq C.
\end{align*}
Next, we turn to the estimate of the second kernel:
\begin{align*}
    \sup_{s,\ell} \left( \sum_{k \in \bz_*^d} \int_0^{s}  \mathcal{K}(t,s,k,\ell)    \ dt \right).
\end{align*}
Note, in this case, we also have $t<s$, and therefore \eqref{ineq_lambda} holds as well. Next, we split again the time integral into two regions: $t\leq \frac{s}{2}$ and $t \geq \frac{s}{2}$. As before, we get enough exponential decay for $t\leq \frac{s}{2}$, so let us first treat this case.
By \eqref{ineq_lambda_improved}, we get
\begin{align*}
    \mathcal{I}_3 := & \sup_{s,\ell} \left( \sum_{k \in \bz_*^d} \int_0^{\frac{s}{2}}  \mathcal{K}(t,s,k,\ell)    \ dt \right) \\
    &\leq \sup_{s,\ell} \left( \sum_{k \in \bz_*^d} \int_0^{\frac{s}{2}}  \frac{\jp{t}^b}{\jp{s}^b} \frac{\abs{\ell \cdot k}}{ \beta + \vert\ell\vert^2}  (s-t) e^{- \jp{s}^{-\delta} \jp{k, kt}^\gamma } \frac{e^{-(1-c) \lambda (s) \jp{ k-\ell , kt-\ell s }^\gamma}}{\jp{k-\ell,kt-\ell s}^{\sigma+1}}\mathbbm{1}_{\abs{k-\ell,kt-\ell s} 
    \le \frac{1}{2} \abs{\ell, \ell s}}   \ dt \right).
\end{align*}
Then, by \eqref{sec2:ineq_s-t} and \eqref{inequality_l_and_k}, we obtain
\begin{align*}
    \mathcal{I}_3 
    &\leq \sup_{s,\ell} \left( \sum_{k \in \bz_*^d} \int_0^{\frac{s}{2}}  \frac{\jp{t}^b}{\jp{s}^b} \jp{s}  e^{- C \jp{s}^{-\delta} \jp{\ell, \ell s}^\gamma } \frac{e^{-(1-c) \lambda (s) \jp{ k-\ell , kt-\ell s }^\gamma}}{\jp{k-\ell,kt-\ell s}^{\sigma}} \ dt \right) \\
    & \leq \sup_{s,\ell} \left( \sum_{k \in \bz_*^d} \frac{1}{\jp{k-\ell}^{\sigma}} \int_0^{\frac{s}{2}}   \jp{s}  e^{- C \jp{s}^{\gamma -\delta} }  \ dt \right) \\
    & \leq C \sup_{s,\ell} \left( \sum_{k \in \bz_*^d} \frac{1}{\jp{k-\ell}^{\sigma}}  \right) \leq C.
\end{align*}
Then, let us treat the case $t \geq \frac{s}{2}$, i.e.,
\begin{align*}
    \mathcal{I}_4 := & \sup_{s,\ell} \left( \sum_{k \in \bz_*^d} \int_{\frac{s}{2}}^s  \mathcal{K}(t,s,k,\ell)    \ dt \right).
\end{align*}
If $k=\ell$, we have 
\begin{align*}
   \sup_{s,\ell} \left( \int_{\frac{s}{2}}^s \mathcal{K}(t,s,\ell,\ell)   \ dt \right) 
   & \leq \sup_{s,\ell} \left( \int_{\frac{s}{2}}^s  (s-t) e^{-(\lambda (s) - \lambda(t))\jp{\ell, \ell t}^\gamma } \frac{e^{-(1-c) \lambda (s) \jp{ \ell(t-s) }^\gamma}}{\jp{\ell(t-s)}^\sigma}  \ dt \right) \\
   & \leq \sup_{s,\ell} \left( \int_{\frac{s}{2}}^s  e^{-(1-c) \lambda (s) \jp{ \ell (t-s) }^\gamma}  \ dt \right) \leq C.
\end{align*}
Therefore, we only have to look at the sum over $k$ when $k \neq \ell$. Similarly to $\mathcal{I}_2$, we split the frequency between $\abs{kt -\ell s} \leq \frac{1}{100}\abs{\ell s}$ and $\abs{kt -\ell s} \geq \frac{1}{100}\abs{\ell s}$. Therefore, using \eqref{sec2:ineq_s-t} we have
\begin{align*}
    \mathcal{I}_4 & \leq  C \sup_{s,\ell} \left( \sum_{k \in \bz_*^d, k \neq \ell} \int_{\frac{s}{2}}^s \frac{\jp{t}^b}{\jp{s}^b} \frac{\jp{s}}{\abs{\ell}}  e^{-(\lambda (s) - \lambda(t))\jp{k, kt}^\gamma } \frac{e^{-(1-c) \lambda (s) \jp{ k-\ell , kt-\ell s }^\gamma}}{\jp{k-\ell,kt-\ell s}^{\sigma}}  \ dt \right) \\
     & \leq  C \sup_{s,\ell} \left( \sum_{k \in \bz_*^d, k \neq \ell} \int_{\frac{s}{2}}^s \frac{\jp{s}}{\abs{\ell}} \frac{e^{-C \abs{kt-\ell s }^\gamma}}{\jp{k-\ell}^{\sigma}}   \ dt \right) \\
     & \leq  C \sup_{s,\ell} \left( \sum_{k \in \bz_*^d, k \neq \ell} \frac{1}{\jp{k - \ell}^{\sigma}}\int_{\frac{s}{2}}^s \jp{s} e^{-\frac{C}{100} \abs{\ell s }^\gamma}  \ dt \right) \\
    & \leq  C \sup_{s,\ell} \left( \sum_{k \in \bz_*^d, k \neq \ell} \frac{1}{\jp{k - \ell}^{\sigma}} \jp{s}^2 e^{-\frac{C}{100} \abs{\ell s }^\gamma} \right)  \leq C.
\end{align*}
Let us now estimate the case $\abs{kt -\ell s} \leq \frac{1}{100}\abs{\ell s}$. By \eqref{sec2:ineq_s-t}, we obtain
\begin{align*}
    \mathcal{I}_4 
    & \leq  C \sup_{s,\ell} \left( \sum_{k \in \bz_*^d, k \neq \ell} \int_{\frac{s}{2}}^s \frac{\jp{t}^b}{\jp{s}^b} \frac{\jp{s}}{\abs{\ell}}  e^{-(\lambda (s) - \lambda(t))\jp{k, kt}^\gamma } \frac{e^{-(1-c) \lambda (s) \jp{ k-\ell , kt-\ell s }^\gamma}}{\jp{k-\ell,kt-\ell s}^{\sigma}}  \ dt \right) \\
    & \leq  C \sup_{s,\ell} \left( \sum_{k \in \bz_*^d, k \neq \ell} \int_{\frac{s}{2}}^s \frac{\jp{s}}{\abs{\ell}}  e^{-(\lambda (s) - \lambda(t))\jp{k, kt}^\gamma } \frac{e^{-C \jp{ k-\ell , kt-\ell s }^\gamma}}{\jp{k-\ell,kt-\ell s}^{\sigma}}  \ dt \right),
\end{align*}
where we used that $(1-c) \lambda (s) \geq C$ for some constant $C$. Using Gevrey inequality \eqref{triangle_inequality_Gevrey} and following the argument \cite[page 62]{BMM_13}, we have 
\begin{align*}
    \mathcal{I}_4 
     & \leq  C \sup_{s,\ell} \left( \sum_{k \in \bz_*^d} \sum_{j: k_j \neq \ell_j}  \int_{\frac{s}{2}}^s \frac{\jp{s}}{\abs{\ell}} e^{-(\lambda (s) - \lambda(t))\jp{k_j, k_jt}^\gamma } \frac{e^{-C \jp{ k_j-\ell_j , k_jt-\ell_j s }^\gamma}}{\jp{k_j-\ell_j,k_jt-\ell_j s}^{\sigma}}  \prod_{i\neq j}^d e^{-C \jp{ k_i-\ell_i }^\gamma}   \ dt \right) \\
     & \leq  C \sup_{s,\ell} \left( \sum_{j=1}^d  \sum_{k_j \in \bz_*} \int_{\frac{s}{2}}^s  \frac{\jp{s}}{\abs{\ell_j}} e^{-(\lambda (s) - \lambda(t))\jp{k_j, k_jt}^\gamma } \frac{e^{-C \jp{ k_j-\ell_j , k_jt-\ell_j s }^\gamma}}{\jp{k_j-\ell_j,k_jt-\ell_j s}^{\sigma}}    \ dt \right).
\end{align*}
Now that we have reduced the estimates in dimension one and since we are in the case 
$\abs{kt -\ell s} \leq \frac{1}{100}\abs{\ell s}$ we have $s \approx \frac{kt}{\ell}$, (note since $s>t$ we have $\abs{k}>\abs{\ell}$). Then, as before, using \eqref{sec2:approx_s-t}, we get \eqref{ineq_lambda_improved_2}.
Hence, for some $\alpha>0$, we obtain
\begin{align*}
    \mathcal{I}_4 
     & \leq  C \sup_{s,\ell} \left( \sum_{j=1}^d  \sum_{k_j \in \bz_*} \int_{\frac{s}{2}}^s  \frac{\jp{s}}{\abs{\ell_j}} e^{-\jp{t}^{1-\delta} \jp{s}^{-1} \abs{\ell_j}^{-1} \jp{k_j, k_jt}^\gamma } \frac{e^{-C \jp{ k_j-\ell_j , k_jt-\ell_j s }^\gamma}}{\jp{k_j-\ell_j,k_jt-\ell_j s}^{\sigma}}    \ dt \right) \\
     & \leq  C \sup_{s,\ell} \left( \sum_{j=1}^d  \sum_{k_j \in \bz_*} \int_{\frac{s}{2}}^s  \frac{\jp{s}}{\abs{\ell_j}} \frac{ \abs{\ell_j}^\alpha \jp{s}^\alpha}{ \jp{t}^{(1 - \delta) \alpha} \jp{k_j, k_jt}^{\gamma \alpha}}  \frac{e^{-C \jp{ k_j-\ell_j , k_jt-\ell_j s }^\gamma}}{\jp{k_j-\ell_j,k_jt-\ell_j s}^{\sigma}}    \ dt \right) \\
     & \leq  C \sup_{s,\ell} \left( \sum_{j=1}^d  \sum_{k_j \in \bz_*} \int_{\frac{s}{2}}^s \frac{\jp{s}^{1 - (1 -\delta) \alpha - \gamma \alpha + \alpha}}{\abs{\ell_j}^{1-\alpha+\gamma \alpha}}   \frac{e^{-C \jp{ k_j-\ell_j , k_jt-\ell_j s }^\gamma}}{\jp{k_j-\ell_j,k_jt-\ell_j s}^{\sigma}}    \ dt \right),
\end{align*}
where we used  $\jp{k_j, k_j t} \geq \abs{k_j t}$, $t \geq \frac{s}{2}$, and $\abs{k}>\abs{\ell}$.
Then, we choose $\alpha$ such that $1 - (1 -\delta) \alpha - \gamma \alpha + \alpha=0$, i.e.,
\begin{align*}
    \alpha = \frac{1}{\gamma - \delta}.
\end{align*}
Next, we do a change of variable $\tau = k_j t$. Therefore
\begin{align*}
    \mathcal{I}_4 
     & \leq  C \sup_{s,\ell} \left( \sum_{j=1}^d  \sum_{k_j \in \bz_*} \int_{0}^{+ \infty}  \abs{\ell_j}^{\alpha - \gamma \alpha -1} \frac{1}{\abs{k_j}} \frac{e^{-C \jp{ k_j-\ell_j , \tau-\ell_j s }^\gamma}}{\jp{k_j-\ell_j,\tau-\ell_j s}^{\sigma}}    \ d \tau \right) \\
    & \leq  C \sup_{s,\ell} \left( \sum_{j=1}^d  \sum_{k_j \in \bz_*} \int_{0}^{+ \infty}  \abs{\ell_j}^{\alpha - \gamma \alpha -1} \frac{1}{\abs{\ell_j}} \frac{e^{-C \jp{ k_j-\ell_j ,\tau-\ell_j s}^\gamma}}{\jp{k_j-\ell_j,\tau-\ell_j s}^{\sigma}}    \ d \tau \right),
\end{align*}
In order to be able to sum over $k$ uniformly in $\ell$, we need $\alpha-1 - \gamma \alpha < 1$. Therefore,
\begin{align*}
    \frac{1}{\gamma - \delta} - 1 - \frac{\gamma}{\gamma - \delta} < 1 \quad \implies \frac{1 + 2\delta}{3} < \gamma.
\end{align*}
This inequality gives us the Gevrey regularity threshold that we have to impose. That is $ \gamma \in \left(\frac{1}{3},1 \right)$. Finally,
\begin{align*}
    \mathcal{I}_4 
     & \leq  C \sup_{s,\ell} \left( \sum_{j=1}^d  \sum_{k_j \in \bz_*} \int_{0}^{+ \infty}  \frac{e^{-C \jp{ k_j-\ell_j , \tau-\ell_j s }^\gamma}}{\jp{k_j-\ell_j,\tau-\ell_j s}^{\sigma}}    \ d \tau \right) \\
     & \leq  C \sup_{s,\ell} \left( \sum_{j=1}^d  \sum_{k_j \in \bz_*} \jp{k_j-\ell_j}^{\sigma} \int_{0}^{+ \infty}  e^{-C \jp{\tau-\ell_j s }^\gamma}     \ d \tau \right)  \leq C,
\end{align*}
which concludes the analysis of the second kernel.

\emph{Nonlinear estimate on $T_{HH}$:}Finally, we estimate the last term of \eqref{sec2:splitting_T_NL}. 
\begin{align*}
    T_{HH} := \jp{t}^b A_t(k, kt) \sum_{\ell \in \bz_*^d} \int_t^{+\infty} \mathbbm{1}_{\abs{\ell, \ell s} \approx \abs{k-\ell,kt-\ell s}} \frac{\ell \cdot k}{\vert\ell\vert^2}  (s-t) \widehat{\varrho}_s (\ell) \widehat{g}_s (k-\ell,kt- \ell s) \ d s.
\end{align*}
In this region, we have $\frac{1}{2}\abs{\ell, \ell s} \leq \abs{k-\ell,kt-\ell s} \leq 2\abs{\ell, \ell s}$. Therefore, we can apply inequality \eqref{gevrey_inequality_4} 
\begin{align*}
    \jp{k,kt}^\gamma = \jp{k-\ell + \ell, kt-\ell s + \ell s}^\gamma 
    \leq c \left( \jp{k-\ell, kt-\ell s}^\gamma  + \jp{\ell, \ell s}^\gamma \right),
\end{align*}
where $c=c(\gamma) \in (0,1)$.
This allows us to bound 
\begin{align*}
    A_t(k, kt) 
    \leq \jp{k,kt}^\sigma e^{c \lambda(t) \jp{k-\ell, kt-\ell s}^\gamma} e^{c  \lambda(t)\jp{\ell, \ell t}^\gamma} \leq \jp{k-\ell, kt-\ell s}^\sigma \jp{\ell,\ell s}^\sigma e^{c \lambda(s) \jp{k-\ell, kt-\ell s}^\gamma} e^{c  \lambda(s)\jp{\ell, \ell s}^\gamma},
\end{align*}
where we used $s>t$ and $\lambda(t)$ is an increasing function.
Hence,
\begin{align*}
    T_{HH} 
    & \leq   \sum_{\ell \in \bz_*^d} \int_t^{+\infty} \mathbbm{1}_{\abs{\ell, \ell s} \approx \abs{k-\ell,kt-\ell s}} e^{-(1-c)  \lambda(s)\jp{\ell, \ell s}^\gamma} e^{-(1-c) \lambda(s) \jp{k-\ell, \eta-\ell s}^\gamma}  \\
    & \quad \times \jp{s}^b \frac{\abs{\ell \cdot k}}{ \beta + \vert\ell\vert^2}  \abs{s-t} \abs{\widehat{A\varrho}_s (\ell)} \abs{\widehat{Ag}_s (k-\ell,kt- \ell s)} \ d s.
\end{align*}
Using \eqref{sec2:ineq_s-t} we can bound $\frac{\abs{\ell \cdot k}}{ \beta + \vert\ell\vert^2}  \abs{s-t} \leq \jp{s} \jp{k-\ell, \eta-\ell s}$. Therefore,
\begin{align*}
    T_{HH} 
    & \leq \jp{t}^b  \sum_{\ell \in \bz_*^d} \int_t^{+\infty} \mathbbm{1}_{\abs{\ell, \ell s} \approx \abs{k-\ell,kt-\ell s}} e^{-(1-c)  \lambda(s)\jp{\ell, \ell s}^\gamma} e^{-(1-c) \lambda(s) \jp{k-\ell, \eta-\ell s}^\gamma} \\
    & \qquad \times \jp{s}^{b+1} \abs{\widehat{A\varrho}_s (\ell)} \abs{\widehat{Bg}_s (k-\ell,kt- \ell s)} \ d s.
\end{align*}
By taking the $L_t^2L_x^2$ norm and proceeding as for the term $T_{LH}$ we get the final estimate thanks to the two exponential decays in the integral. That is,
\begin{align}
\label{sec2:estimate_T_HH}
     \norm{T_{HH}}_{L_t^2 L_x^2}^2 
      \leq C \norm{\jp{t}^b A_t (\nabla_x, t \nabla_x) \varrho}_{L_t^2 L_x^2}^2 \sup_{t \in \br_+} \norm{\jp{v}^M B_t (\nabla_x, \nabla_v)g_t}_{L_{x,v}^2}^2.
\end{align}
Hence, by collecting \eqref{sec2:estimate_ginf},\eqref{sec2:estimate_h(U)}, \eqref{sec2:estimate_T_LH}, \eqref{sec2:estimate_T_HL}, and \eqref{sec2:estimate_T_HH} we get the result.
\end{proof}

\section{A priori estimates on the distribution function \texorpdfstring{$g$}{Lg}}
\label{section_distrg}

In this Section,
we give a priori estimates for the distribution function $g$. 
Inserting the expression for the modified density 
$\widehat{\varrho}_t(k)=(|k|^2 + \beta)\widehat{U}_t(k)$ 
in \eqref{FT_g}, 
we get the following equation for $\widehat{g}_t(k,\eta)$:
    \begin{equation*}
    \label{sec3:eqg}
	\partial_t \widehat{g}_t(k ,\eta) 
    = -\frac{[(\eta - kt)\cdot k]}{|k|^2+\beta} \widehat{\varrho}_t (k) \widehat{\mu} (\eta -kt)
    - \sum_{\ell \in \mathbb{Z}^d} 
    \frac{[(\eta -kt)\cdot \ell]}{|\ell|^2+\beta} \widehat{\varrho}_t(\ell) \widehat{g}_t(k - \ell ,\eta  - \ell t).
    \end{equation*}
    
We consider energy estimates
by using the $L^2$-based norm introduced in \eqref{sec1:normg}. 
Recalling the notation
    $B_t(k,\eta)=e^{\lambda(t) 
    \langle k, \eta \rangle^{\gamma}}  
    \langle k, \eta \rangle^{\sigma+1},$ 
note that for any multi-index $j \in \bn^d$ 
and $\lambda(t)\in[0,\lambda]$, there exists a constant $C=C(|j|,\lambda)$ such that
    \begin{equation*}
	\left| \partial_\eta^j B_t (k, \eta) \right| 
    \le C 
    \frac{1}{\langle k, \eta \rangle^{\vert j \vert (1-\gamma)} } B_t (k, \eta).
    \end{equation*}
Thanks to the previous inequality, we get the bound
    \begin{align}
    \label{sec3:equi_norm_1}
    \norm{\jp{v}^M B_t(\nabla_x, \nabla_v) g_t}_{L_{x,v}^2}
    & \le C  \sum_{\abs{j} \leq M} 
    \norm{B_t (k,\eta) \partial_\eta^j \widehat{g}_t}_{L_{k,\eta}^2} 
    = C \sum_{\abs{j} \leq M} \norm{B_t (\nabla_x, \nabla_v) (v^j g_t)}_{L_{x,v}^2},
    \end{align}
and 
\begin{align}
\label{sec3:equi_norm_2}
    \sum_{\abs{j} \leq M} \norm{B_t (\nabla_x, \nabla_v) (v^j g_t)}_{L_{x,v}^2}
    \leq \sum_{\abs{j} \leq M} \norm{ \partial_\eta^j \left(B_t (k,\eta)  \widehat{g}_t)\right)}_{L_{k,\eta}^2}
    = \norm{\jp{v}^M B_t(\nabla_x, \nabla_v) g_t}_{L_{x,v}^2}.
    \end{align}
Therefore,
\begin{align}
\label{sec3:equi_norm}
    \norm{\jp{v}^M B_t(\nabla_x, \nabla_v) g_t}_{L_{x,v}^2} \approx \sum_{\abs{j} \leq M} \norm{B_t (\nabla_x, \nabla_v) (v^j g_t)}_{L_{x,v}^2},
\end{align}
and it suffices to get a priori estimates on the second norm.

    \begin{prop}[A priori estimates on the distribution function]
       \label{sec3:prop1}
       Let $g$ be the solution of the Vlasov equation $\eqref{nlequation_g}$. Let $\mu$ be an analytic homogeneous equilibrium satisfying assumptions (H1)-(H3) with $\lambda>0$ as in \eqref{norm_mu} and let $g_\infty$ be a Gevrey function of mean zero satisfying \eqref{norm_g_infty}. Then for $\sigma > 10+d$, $M > \frac{d}{2}$,  it holds that
    \begin{align}
    \label{sec3:prop1_estimate}
        & \frac{1}{2}\frac{d}{dt} \sum_{0 \leq \abs{j} \leq M}\norm{B_t(\nabla_x, \nabla_v) (v^j g_t)}_{L^2_{x,v}}^2 
        + \left( \jp{t}^{-2}\norm{ A_t(\nabla_x, t\nabla_x)\varrho_t}_{L^2_x} 
        - \dot{\lambda}(t) \right) \norm{\jp{\nabla_x , \nabla_v}^{\gamma/2} B_t(\nabla_x, \nabla_v) (v^j g_t)}_{L^2_{x,v}} \nonumber \\
        & \quad \geq - \jp{t}^2 \norm{A_t (\nabla_x, t\nabla_x) \varrho_t}_{L^2_x}\sum_{0 \leq \abs{j} \leq M}\norm{B_t(\nabla_x, \nabla_v) (v^j g_t)}_{L^2_{x,v}}^2 \nonumber \\
        & \qquad - \jp{t}\norm{A_t (\nabla_x, t\nabla_x)  \varrho_t}_{L^2_x}\sum_{0 \leq \abs{j} \leq M}\norm{B_t(\nabla_x, \nabla_v) (v^j g_t)}_{L^2_{x,v}}.
    \end{align}
    \end{prop}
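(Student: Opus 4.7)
The plan is to derive the evolution equation satisfied by $v^j g_t$, differentiate $\|B_t(\nabla_x,\nabla_v)(v^j g_t)\|^2_{L^2_{x,v}}$ in time, and then bound the resulting inner product by three qualitatively different pieces: a linear $\mu$-forcing giving the $\jp{t}\|A_t\varrho_t\|\|B_t(v^j g_t)\|$ term on the RHS of \eqref{sec3:prop1_estimate}, a nonresonant low-high/high-low nonlinearity giving the $\jp{t}^2\|A_t\varrho_t\|\|B_t(v^j g_t)\|^2$ term, and a resonant high-high piece responsible for the $(\jp{t}^{-2}\|A_t\varrho_t\|-\dot\lambda(t))\|\jp{\nabla_x,\nabla_v}^{\gamma/2}B_t(v^j g_t)\|$ correction placed on the LHS. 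Summation over $|j|\le M$ and the norm equivalence \eqref{sec3:equi_norm} then give \eqref{sec3:prop1_estimate}.

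First I would commute $v^j$ through \eqref{nlequation_g} via $v^j\cdot\nabla_v g = \nabla_v(v^j g) - (\nabla_v v^j)g$, obtaining
$$\partial_t(v^j g_t) = -v^j E(t,x+vt)\cdot\nabla_v\mu - E(t,x+vt)\cdot(\nabla_v-t\nabla_x)(v^j g_t) + E(t,x+vt)\cdot(\nabla_v v^j)\,g_t,$$
where the last commutator involves only $v^{j'}g$ with $|j'|<|j|$ and is absorbed by summing over $|j|\le M$. Exploiting $\partial_t B_t(k,\eta)=\dot\lambda(t)\jp{k,\eta}^\gamma B_t(k,\eta)$ with $\dot\lambda(t)>0$ from \eqref{def_lambda}, I would write
$$\frac{1}{2}\frac{d}{dt}\|B_t(v^j g_t)\|^2_{L^2_{x,v}} = \dot\lambda(t)\|\jp{\nabla_x,\nabla_v}^{\gamma/2}B_t(v^j g_t)\|^2_{L^2_{x,v}} + \operatorname{Re}\langle B_t(v^j g_t), B_t\partial_t(v^j g_t)\rangle_{L^2_{x,v}},$$
so that the $\jp{\nabla}^{\gamma/2}$-reservoir on the right is available to absorb the unavoidable high-high nonlinear losses.

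For the linear forcing, the Fourier symbol is $-((\eta-kt)\cdot k/(|k|^2+\beta))\widehat\varrho_t(k)\widehat\mu(\eta-kt)$; the analyticity assumption \eqref{norm_mu} gives $e^{\lambda\jp{\eta-kt}}|\widehat\mu(\eta-kt)|\lesssim 1$ and absorbs all Gevrey weight depending on $\eta-kt$, while the polynomial prefactor is dominated by $\jp{t}\jp{k,kt}^\sigma\jp{\eta-kt}^\sigma$ (using $|k|/(|k|^2+\beta)\le 1$ and $|\eta|\le|kt|+|\eta-kt|$). Cauchy–Schwarz in $(k,\eta)$ then yields the bound $\jp{t}\|A_t\varrho_t\|_{L^2_x}\|B_t(v^j g_t)\|_{L^2_{x,v}}$ matching the last RHS term of \eqref{sec3:prop1_estimate}.

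The main obstacle is the nonlinear transport term, whose Fourier convolution $\sum_\ell((\eta-kt)\cdot\ell/(|\ell|^2+\beta))\widehat\varrho_t(\ell)\widehat{v^j g_t}(k-\ell,\eta-\ell t)$, paired with $B_t^2(k,\eta)\overline{\widehat{v^j g_t}}(k,\eta)$, is split along \eqref{slpitting}. In the low-high zone \eqref{gevrey_inequality} transfers $B_t(k,\eta)$ onto the $g$-factor at cost $e^{c\lambda(t)\jp{\ell,\ell t}^\gamma}$ with $c<1$; combining with $|(\eta-kt)\cdot\ell|/(|\ell|^2+\beta)\le\jp{t}^2\jp{k-\ell,\eta-\ell t}$ via \eqref{sec2:ineq_s-t} and exploiting the Sobolev gap $\sigma>d$ to sum in $\ell$ produces the $\jp{t}^2\|A_t\varrho_t\|\|B_t(v^j g_t)\|^2$ bound. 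The high-low zone is symmetric: the weight moves onto $\widehat\varrho$, and the $L^\infty_\eta$ Sobolev embedding \eqref{Bg_Sobolev_embedding} together with Cauchy–Schwarz produces the $\jp{t}\|A_t\varrho_t\|\|B_t(v^j g_t)\|$ bound. The delicate high-high reaction zone, where $\jp{\ell,\ell t}\sim\jp{k-\ell,\eta-\ell t}\sim\jp{k,\eta}$, forces via \eqref{gevrey_inequality_4} a genuine $\jp{k,\eta}^{\gamma/2}$ loss to be split between $\varrho$ and $g$; Cauchy–Schwarz then produces a pairing of the form $(\text{polynomial in }\jp{t})\cdot\|\jp{\nabla_x,\nabla_v}^{\gamma/2}A_t\varrho_t\|_{L^2_x}\|\jp{\nabla_x,\nabla_v}^{\gamma/2}B_t(v^j g_t)\|_{L^2_{x,v}}$. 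Reducing $\|\jp{\nabla_x,\nabla_v}^{\gamma/2}A_t\varrho_t\|_{L^2_x}$ via $\jp{k,kt}^{\gamma/2}\lesssim\jp{t}^{\gamma/2}$ on the density frequencies and applying Young's inequality to unbalance the product yields precisely the LHS correction $(\jp{t}^{-2}\|A_t\varrho_t\|-\dot\lambda(t))\|\jp{\nabla_x,\nabla_v}^{\gamma/2}B_t(v^j g_t)\|$ appearing in \eqref{sec3:prop1_estimate}, which is absorbed by the $\dot\lambda(t)$-gain from the time-increasing Gevrey radius. This reaction-zone bookkeeping is where the argument is most technical.
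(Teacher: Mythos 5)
Your decomposition into linear forcing plus nonlinear transport and the resulting LH/HL/HH frequency splitting is the right skeleton, but there are two intertwined gaps that would make the argument fail as written.

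First, you have omitted the essential cancellation coming from the Hamiltonian/antisymmetric structure of the transport term. After multiplying the equation for $\partial_\eta^j\widehat g$ by $B_t^2(k,\eta)\overline{\partial_\eta^j\widehat g}$, the top-order piece of $\mathcal T_{NL}$ must be written as a \emph{commutator} $B_t(k,\eta)-B_t(k-\ell,\eta-\ell t)$ plus the term containing $B_t(k-\ell,\eta-\ell t)$; the paper shows this second piece vanishes identically under the changes of variables $\ell\to-\ell$, $k\to k-\ell$, $\eta\to\eta-\ell t$ together with $i\widehat E_t(-\ell)=-\overline{i\widehat E_t(\ell)}$. If you skip this step and simply move $B_t(k,\eta)$ to the $g$-factor as you propose in the LH zone, you are left with the multiplier $|\eta-kt|\cdot|\ell|/(|\ell|^2+\beta)$, whose bound $\lesssim\jp{t}^2\jp{k-\ell,\eta-\ell t}$ costs an \emph{entire extra derivative on the high-frequency factor}. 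Nothing in the LH zone compensates a full derivative loss (there is no double exponential gain there), so the estimate does not close. It is precisely the smallness of the surviving commutator, $|B_t(k,\eta)-B_t(k-\ell,\eta-\ell t)|\lesssim\jp{\ell,\ell t}\big(\jp{k-\ell,\eta-\ell t}^{1-\gamma}+\jp{k,\eta}^{1-\gamma}\big)^{-1}e^{c\lambda(t)\jp{\ell,\ell t}^\gamma}B_t(k-\ell,\eta-\ell t)$ (from Lemma~\ref{lemma_gevrey_inequality_3}), that balances against the derivative loss and leaves only a controlled $\jp{k,\eta}^{\gamma/2}\jp{k-\ell,\eta-\ell t}^{\gamma/2}$ residue.

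Second, you have inverted which zones produce which contributions. The $\gamma/2$-derivative loss that is absorbed by the $\dot\lambda(t)$ gain comes from the \emph{low--high (reaction)} zone, not from the high--high zone. The HH zone ($\jp{\ell,\ell t}\approx\jp{k-\ell,\eta-\ell t}\approx\jp{k,\eta}$) is the benign one: applying \eqref{gevrey_inequality_4} gives a genuine \emph{double exponential gain} $e^{-(1-c)\lambda(t)\jp{\ell,\ell t}^\gamma}e^{-(1-c)\lambda(t)\jp{k-\ell,\eta-\ell t}^\gamma}$ that kills all polynomial losses, leading to a $\jp{t}^2\norm{A\varrho}\norm{B(v^j g)}^2$ bound just like HL. Correspondingly, the LH contribution is the one yielding $\jp{t}^{-2}\norm{A\varrho}\norm{\jp{\nabla_x,\nabla_v}^{\gamma/2}B(v^j g)}^2$ (not $\jp{t}^2\norm{A\varrho}\norm{B(v^jg)}^2$ as you claim), and HL yields $\jp{t}^2\norm{A\varrho}\norm{B(v^jg)}^2$ (not $\jp{t}\norm{A\varrho}\norm{B(v^jg)}$). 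Finally, your claim $\jp{k,kt}^{\gamma/2}\lesssim\jp{t}^{\gamma/2}$ on density frequencies is false uniformly in $k$; the paper avoids needing anything like this because the $\gamma/2$ weight never needs to be placed on $\varrho$ at all.

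Your treatment of the linear $\mu$-forcing and the lower-order $\eta$-derivative commutator ($\mathcal T_{NL}^{(2)}$) is essentially correct and matches the paper.
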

    
The following two Lemmas are used several times in the proof of Proposition \ref{sec3:prop1}. 
\begin{lem}[Lemma 3.1 in \cite{BMM_13}]
\label{sec3:lemma_Bedrossian}
    \begin{itemize}
        \item[a)] 
        Let $\phi, \varphi \in L^2 (\bz^d \times \br^d)$ and $ \jp{k}^\theta \psi \in L^2 (\bz^d)$  for $\theta > \frac{d}{2}$. Then, for any $t \in \br$, we have 
        \begin{align}
        \label{sec3:lemma_Bedrossian_estimate_1}
            \abs{\sum_{k,\ell \in \mathbb{Z}^d} \int_{\br^d} \phi (k,\eta) \psi(\ell) \varphi(k-\ell, \eta-\ell t) \ d\eta} \leq C \norm{\phi}_{L_{k,\eta}^2} 
            \norm{\jp{k}^{\theta} \psi}_{L_{k}^2}
            \norm{\varphi}_{L_{k,\eta}^2}.
        \end{align}
        \item[b)] 
        Let $\phi, \jp{k}^\theta \varphi \in L^2 (\bz^d \times \br^d)$ and $ \psi \in L^2 (\bz^d)$  for $\theta > \frac{d}{2}$. Then, for any $t \in \br$, we have 
    \begin{align}
    \label{sec3:lemma_Bedrossian_estimate_2}
        \abs{\sum_{k,\ell \in \mathbb{Z}^d} \int_{\br^d} \phi (k,\eta) \psi(\ell) \varphi(k-\ell, \eta-\ell t) \ d\eta} \leq C \norm{\phi}_{L_{k,\eta}^2}
        \norm{ \psi}_{L_{k}^2}
        \norm{\jp{k}^{\theta} \varphi}_{L_{k,\eta}^2}.
    \end{align}
    \end{itemize}
\end{lem}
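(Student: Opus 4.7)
The plan is to prove both parts by the same two-step Cauchy-Schwarz scheme. Write the left-hand side as
\begin{equation*}
   I(t):=\sum_{k\in\bz^d}\int_{\br^d} \phi(k,\eta)\,F(k,\eta)\,d\eta,\qquad F(k,\eta):=\sum_{\ell\in\bz^d}\psi(\ell)\,\varphi(k-\ell,\eta-\ell t).
\end{equation*}
An application of Cauchy-Schwarz in the $(k,\eta)$ variables immediately gives $\abs{I(t)}\le \norm{\phi}_{L^2_{k,\eta}}\norm{F}_{L^2_{k,\eta}}$, so the entire task reduces to bounding $\norm{F}_{L^2_{k,\eta}}$ in two different ways, corresponding to the two statements.

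For part (a), I insert the weight $\jp{\ell}^{-\theta}\jp{\ell}^{\theta}$ into the sum defining $F$ and apply Cauchy-Schwarz in $\ell$:
\begin{equation*}
   \abs{F(k,\eta)}^2\le \Bigl(\sum_{\ell}\jp{\ell}^{-2\theta}\Bigr)\sum_{\ell}\jp{\ell}^{2\theta}\abs{\psi(\ell)}^2\abs{\varphi(k-\ell,\eta-\ell t)}^2.
\end{equation*}
Since $\theta>\frac{d}{2}$, the first factor is a finite constant $C$. Integrating in $\eta$, summing in $k$, and performing the change of variables $(k',\eta')=(k-\ell,\eta-\ell t)$ (which for each fixed $\ell,t$ is a bijection of $\bz^d\times\br^d$ preserving counting/Lebesgue measure, hence preserves the $L^2$ norm of $\varphi$) yields $\norm{F}_{L^2_{k,\eta}}^2\le C\,\norm{\jp{k}^\theta\psi}_{L^2_k}^2\,\norm{\varphi}_{L^2_{k,\eta}}^2$, which gives \eqref{sec3:lemma_Bedrossian_estimate_1}.

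For part (b), I use exactly the same scheme but place the weight on the $\varphi$-factor instead, inserting $\jp{k-\ell}^{-\theta}\jp{k-\ell}^{\theta}$ and applying Cauchy-Schwarz in $\ell$:
\begin{equation*}
   \abs{F(k,\eta)}^2\le \Bigl(\sum_{\ell}\jp{k-\ell}^{-2\theta}\Bigr)\sum_{\ell}\abs{\psi(\ell)}^2\jp{k-\ell}^{2\theta}\abs{\varphi(k-\ell,\eta-\ell t)}^2.
\end{equation*}
For each fixed $k$, the first factor is bounded by $C$ uniformly in $k$ because $\theta>\frac{d}{2}$. The same translation argument then produces $\norm{F}_{L^2_{k,\eta}}^2\le C\,\norm{\psi}_{L^2_k}^2\,\norm{\jp{k}^\theta\varphi}_{L^2_{k,\eta}}^2$, which is \eqref{sec3:lemma_Bedrossian_estimate_2}.

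The only point worth highlighting is that no estimate in the argument depends on $t$: the shift $\eta\mapsto\eta-\ell t$ is absorbed cleanly by a change of variables in the $L^2_\eta$ norm, so the constants are uniform in $t\in\br$. Apart from that, the argument is purely a weighted Cauchy-Schwarz/Young-type inequality and carries no hidden obstacle; the assumption $\theta>\frac{d}{2}$ is used exactly once, to ensure summability of $\jp{\cdot}^{-2\theta}$ on $\bz^d$.
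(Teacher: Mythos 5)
Your proof is correct and follows essentially the same route the paper indicates (it merely remarks that the lemma "follows by applying Cauchy--Schwarz and Young's inequality" and defers to \cite{BMM_13}): a first Cauchy--Schwarz to peel off $\phi$, then a weighted Cauchy--Schwarz in $\ell$ using $\theta>d/2$ for summability, and the measure-preserving shift $(k,\eta)\mapsto(k-\ell,\eta-\ell t)$ to make the bound uniform in $t$. No gap.
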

The proof follows by applying the Cauchy-Schwartz and Young's inequality, see e.g. \cite[Lemma 3.1]{BMM_13}.
By \eqref{gevrey_inequality} and \eqref{gevrey_inequality_2}, it also holds the following result.
\begin{lem}
\label{lemma_gevrey_inequality_3}
    Let $x,y \geq 0$ and $\gamma \in (0,1)$ such that $\abs{x-y} < \frac{x}{2}$. Then there exists a $c\in(0,1)$ such that
    \begin{align*}
        \abs{e^{\lambda x^\gamma} - e^{\lambda y^\gamma}} 
        \leq \lambda C \frac{\abs{x-y}}{x^{1-\gamma} + y^{1-\gamma}} e^{c \lambda \abs{x-y}^\gamma} e^{\lambda x^\gamma}.
    \end{align*}
\end{lem}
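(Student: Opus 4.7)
The statement is a pointwise Gevrey difference inequality, so my plan is to reduce it to the already-proven Gevrey bounds \eqref{gevrey_inequality_2} and \eqref{gevrey_inequality} via a one-variable mean value argument on the exponential.

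First, I apply the mean value theorem to the function $s \mapsto e^{\lambda s}$ on the interval with endpoints $x^\gamma$ and $y^\gamma$. This yields a point $\xi$ between $x^\gamma$ and $y^\gamma$ such that
$$
\bigl| e^{\lambda x^\gamma} - e^{\lambda y^\gamma} \bigr| \;=\; \lambda \, |x^\gamma - y^\gamma| \, e^{\lambda \xi} \;\le\; \lambda \, |x^\gamma - y^\gamma| \, e^{\lambda \max(x^\gamma,y^\gamma)}.
$$
The two factors $|x^\gamma-y^\gamma|$ and $e^{\lambda \max(x^\gamma,y^\gamma)}$ will be estimated separately; each uses one of the previously stated Gevrey inequalities, exploited in its own regime (polynomial improvement vs.\ exponential improvement).

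Second, I bound $|x^\gamma-y^\gamma|$ by the sub-additive concavity estimate, i.e.\ the bare-variable analogue of \eqref{gevrey_inequality_2},
$$
|x^\gamma - y^\gamma| \;\le\; C\,\frac{|x-y|}{x^{1-\gamma} + y^{1-\gamma}},
$$
which follows from \eqref{gevrey_inequality_2} (for small $x,y$ the Japanese bracket and the bare value are comparable; for large $x,y$ the statement is identical up to constants), or directly from $|x^\gamma-y^\gamma|\le \gamma\int_{\min(x,y)}^{\max(x,y)} t^{\gamma-1}dt$ and monotonicity of $t^{\gamma-1}$.

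Third, I use the hypothesis $|x-y|<x/2$ to handle the exponential factor with the sharper inequality \eqref{gevrey_inequality}, taking $K=2$: there exists $c=\gamma\in(0,1)$ with
$$
\bigl|x^\gamma - y^\gamma\bigr| \;\le\; c\,|x-y|^\gamma,
$$
so in particular $\max(x^\gamma,y^\gamma) \le x^\gamma + c\,|x-y|^\gamma$, and hence
$$
e^{\lambda \max(x^\gamma,y^\gamma)} \;\le\; e^{\lambda x^\gamma}\, e^{c\lambda |x-y|^\gamma}.
$$
Combining the three displays immediately gives the stated bound with the same $c<1$ from \eqref{gevrey_inequality}.

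The only mildly delicate point is the passage from the Japanese-bracket inequalities \eqref{gevrey_inequality_2}--\eqref{gevrey_inequality} to the bare-power versions needed here. This is routine: for $x,y$ bounded away from zero the two versions are equivalent up to constants, while near zero both sides of the target inequality stay bounded and the estimate is trivial, so no genuine obstacle arises. The argument works uniformly in $\lambda \in[0,\lambda_\infty]$ (with the constant $C$ depending only on $\gamma$).
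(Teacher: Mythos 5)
Your proof is correct and is precisely the argument the paper has in mind: the paper states Lemma \ref{lemma_gevrey_inequality_3} with only the remark that it follows from \eqref{gevrey_inequality_2} and \eqref{gevrey_inequality}, and the mean-value factorisation $|e^{\lambda x^\gamma}-e^{\lambda y^\gamma}| \le \lambda\,|x^\gamma-y^\gamma|\,e^{\lambda\max(x^\gamma,y^\gamma)}$ is the natural way to combine them, with the bare-power version of \eqref{gevrey_inequality_2} controlling the prefactor and \eqref{gevrey_inequality} with $K=2$ (giving $c=\gamma<1$) controlling the exponential via $\max(x^\gamma,y^\gamma)\le x^\gamma + c|x-y|^\gamma$. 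One small caveat on your justification of the bare-power passage: the claim that for small $x,y$ the Japanese bracket and bare value are comparable is not accurate as $x\to 0$ (there $\jp{x}\to 1$ while $x\to 0$); what actually makes the bare inequalities hold uniformly is your alternative derivation via $x^\gamma-y^\gamma=\gamma\int_y^x t^{\gamma-1}\,dt$ together with the hypothesis $|x-y|<x/2$, which forces $x$ and $y$ to be comparable.
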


\begin{proof}[Proof of Proposition \ref{sec3:prop1}]
We compute the time derivative of the $L_{x,v}^2$ norm in \eqref{sec3:equi_norm}.
By the product rule, we get,
\begin{align*}
    \frac{1}{2} \frac{d}{dt}
    \norm{B_t(\nabla_x, \nabla_v) (v^j g_t)}_{L^2_{x,v}}^2 
    & = \dot{\lambda}(t)  \norm{\jp{\nabla_x , \nabla_v}^{\gamma/2} B_t(\nabla_x, \nabla_v) (v^j g_t)}_{L^2_{x,v}} \\
    &\quad +  \sum_{k \in \mathbb{Z}^d} \Re
    \int_{\mathbb{R}^d} 
    \overline{B_t(k,\eta)\partial_\eta^j \widehat{g}_t(k,\eta)}
    B_t(k,\eta) \partial_t\partial_\eta^j \widehat{g}_t(k,\eta)
     \ d\eta.
\end{align*}
Recall that the Fourier transform of the Vlasov equation \eqref{nlequation_g} is given by
\begin{align*}
    \partial_t \widehat{g}_t(k,\eta) & =
    - \widehat{E}_t (k)\cdot \widehat{\nabla_v \mu} (\eta - kt) 
    - i \sum_{\ell \in \mathbb{Z}^d} (\eta -kt) \cdot \widehat{E}_{t} (\ell) \widehat{g}_t(k - \ell ,\eta  - \ell t).
\end{align*}
Thus, we get
\begin{align}
\label{time_derivative_L2_norm}
    \frac{1}{2} \frac{d}{dt} & \norm{B_t(\nabla_x, \nabla_v) (v^j g_t)}_{L^2_{x,v}}^2 
    - \dot{\lambda}(t)  \norm{\jp{\nabla_x , \nabla_v}^{\gamma/2} B_t(\nabla_x, \nabla_v) (v^j g_t)}_{L^2_{x,v}}=\mathcal{T}_L + \mathcal{T}_{NL},
\end{align}
where 
\begin{align}
\label{sec3:T_L}
    & \mathcal{T}_L:=- \sum_{k \in \mathbb{Z}^d} \Re
    \int_{\mathbb{R}^d}  
    \overline{B_t(k,\eta)\partial_{\eta}^j \widehat{g}_t (k,\eta)} 
    B_t(k,\eta)
    \widehat{E}_t (k)\cdot \partial_{\eta}^j \widehat{\nabla_v \mu} (\eta - kt)  \ d\eta, \\
\label{sec3:T_NL}
    & \mathcal{T}_{NL}:=-
     \sum_{k \in \mathbb{Z}^d} 
     \Re i \int_{\mathbb{R}^d} 
     \overline{B_t(k,\eta) \partial_{\eta}^j \widehat{g}_t (k,\eta)}
     B_t(k,\eta)\partial_{\eta}^j \left( \sum_{\ell \in \mathbb{Z}^d} (\eta -kt) \cdot \widehat{E}_{t} (\ell) \widehat{g}_t(k - \ell ,\eta  - \ell t)  \right)   \ d\eta.
\end{align}
Using the following bound
\begin{align}
\label{sec3:bound T_L+T_NL}
    \abs{\mathcal{T}_L + \mathcal{T}_{NL}}
    & \leq  C \jp{t}
      \norm{A_t (\nabla_x, t\nabla_x)  \varrho_t}_{L^2_x} \norm{B_t(\nabla_x, \nabla_v) (v^j g_t)}_{L^2_{x,v}} \nonumber \\
    & \quad + C \jp{t}^{-2}  
    \norm{A\varrho}_{L^2_x}
    \norm{\jp{\nabla_x , \nabla_v}^{\gamma/2} B_t(\nabla_x, \nabla_v) (v^j g_t)}_{L^2_{x,v}}^2 \nonumber \\
    & \quad + C \jp{t} \norm{A_t (\nabla_x, t\nabla_x) \varrho_t}_{L^2_x} \norm{B_t(\nabla_x, \nabla_v) (v^j g_t)}_{L^2_{x,v}}^2
\end{align}
in \eqref{time_derivative_L2_norm}, we get
\begin{align*}
     \frac{1}{2} \frac{d}{dt} & \norm{B_t(\nabla_x, \nabla_v) (v^j g_t)}_{L^2_{x,v}}^2 
    - \dot{\lambda}(t)  \norm{\jp{\nabla_x , \nabla_v}^{\gamma/2} B_t(\nabla_x, \nabla_v) (v^j g_t)}_{L^2_{x,v}} \geq - \abs{\mathcal{T}_L + \mathcal{T}_{NL}} \\
    & \geq - C \jp{t}
      \norm{A_t (\nabla_x, t\nabla_x)  \varrho_t}_{L^2_x} \norm{B_t(\nabla_x, \nabla_v) (v^j g_t)}_{L^2_{x,v}} \nonumber \\
    & \quad - C \jp{t}^{-2}  
    \norm{A\varrho}_{L^2_x}
    \norm{\jp{\nabla_x , \nabla_v}^{\gamma/2} B_t(\nabla_x, \nabla_v) (v^j g_t)}_{L^2_{x,v}}^2 \nonumber \\
    & \quad - C \jp{t} \norm{A_t (\nabla_x, t\nabla_x) \varrho_t}_{L^2_x} \norm{B_t(\nabla_x, \nabla_v) (v^j g_t)}_{L^2_{x,v}}^2.
\end{align*}
This shows estimate \eqref{sec3:prop1_estimate} of Proposition \eqref{sec3:prop1} assuming that we have \eqref{sec3:bound T_L+T_NL}. So let us prove that estimate \eqref{sec3:bound T_L+T_NL} holds.

\textbf{Estimate on $\mathcal{T}_L$: linear term.}
 We start studying the linear term \eqref{sec3:T_L} involving the homogeneous equilibrium $\mu$. Using that
$
    B_t(k,\eta) \leq B_t(\eta-kt)B_t(k,kt),
$
where $B_t(\eta)=e^{\lambda(t) \jp{\eta}}$,
and recalling the relation 
$\widehat{E}_t (k) = -ik \widehat{U}_t (k)$ 
and the definition of the modified density
$\widehat{\varrho}_t(k)=(\abs{k}^2 + \beta) \widehat{U}_t(k)$,
we obtain
    \begin{align*}
    &\abs{\sum_{k \in \mathbb{Z}^d} \Re \int_{\mathbb{R}^d} 
     \overline{B_t(k,\eta) \partial_{\eta}^j \widehat{g}_t (k,\eta)}
    B_t(k,\eta)   \widehat{E}_t (k)\cdot \partial_{\eta}^j \widehat{\nabla_v \mu} (\eta - kt)  \ d\eta} \\
    & \quad \leq \sum_{k \in \mathbb{Z}^d} \int_{\mathbb{R}^d} 
    B_t(k,kt)   \frac{\abs{k}}{\abs{k}^2 + \beta} \abs{\widehat{\varrho}_t(k)} B_t(\eta-kt) \abs{\partial_{\eta}^j \widehat{\nabla_v \mu} (\eta - kt)} B_t(k,\eta) \abs{\partial_{\eta}^j \widehat{g}_t (k,\eta)}  \ d \eta \\
    & \quad \leq \jp{t} \sum_{k \in \mathbb{Z}_*^d} \abs{\widehat{A \varrho}_t (k)} \int_{\mathbb{R}^d}    
    B_t(\eta-kt) \abs{\partial_{\eta}^j \widehat{\nabla_v \mu} (\eta - kt)} B_t(k,\eta) \abs{\partial_{\eta}^j \widehat{g}_t (k,\eta)}  \ d \eta,
    \end{align*}
where in the last inequality, we used the fact that
$|k|^{-1} B_t(k,kt) \leq \jp{t} A_t(k,kt)$, for $k \neq 0$.
By applying Cauchy-Schwarz in $\eta$ we obtain,
\begin{align*}
    & \abs{ \sum_{k \in \mathbb{Z}^d} \Re
    \int_{\mathbb{R}^d} 
    \overline{B_t(k,\eta) \partial_{\eta}^j \widehat{g}_t (k,\eta)}
    B_t(k,\eta)
    \widehat{E}_t (k)\cdot \partial_{\eta}^j \widehat{\nabla_v \mu} (\eta - kt)  \ d\eta} \\
    & \quad \leq \jp{t} \sum_{k \in \mathbb{Z}_*^d} \abs{\widehat{A \varrho}_t (k)} \left(\int_{\mathbb{R}^d}    B_t^2(\eta-kt) \abs{\partial_{\eta}^j \widehat{\nabla_v \mu} (\eta - kt)}^2 \ d \eta \right)^\frac{1}{2}  \left(\int_{\mathbb{R}^d} B_t^2(k,\eta) \abs{\partial_{\eta}^j \widehat{g}_t (k,\eta)}^2  \ d \eta \right)^\frac{1}{2}.
\end{align*}
Using the assumption \eqref{norm_mu} on the equilibrium, we can bound the first integral as follows 
\begin{align*}
    \int_{\mathbb{R}^d}    
    &B_t^2(\eta-kt) \abs{\partial_{\eta}^j \widehat{\nabla_v \mu} (\eta - kt)}^2 \ d \eta 
     = \int_{\mathbb{R}^d}    B_t^2(\eta-kt) \abs{\partial_{\eta}^j \left( (\eta-kt) \widehat{\mu} (\eta - kt) \right)}^2 \ d \eta \\
    & \le C \int_{\mathbb{R}^d}    B_t^2(\eta-kt) \jp{\eta-kt}^2
    \left(\vert\partial_{\eta}^j \widehat{\mu} (\eta - kt)\vert^2 
    +\vert\partial_{\eta}^{j-1} \widehat{\mu} (\eta - kt)\vert^2 \right)
    \ d \eta \\
    & \leq C \left(\sum_{|p|\le M}
    \norm{
    e^{\lambda \langle \eta \rangle} 
    \vert \partial_\eta^p\widehat{\mu}(\eta)\vert}^2_{L^\infty_\eta}
    \right)
    \int_{\mathbb{R}^d}   
    B_t^2(\eta-kt) \jp{\eta-kt}^2 e^{-2 \lambda \langle \eta-kt \rangle} \ d \eta \\
    &\le C(\lambda, \sigma) 
    \left(\sum_{|p|\le M}
    \norm{
    e^{\lambda \langle \eta \rangle} 
    \vert \partial_\eta^p\widehat{\mu}(\eta)\vert}^2_{L^\infty_\eta}
    \right),
\end{align*}
where we recall that $\lambda > \lambda(t)$.
Hence, applying Cauchy-Schwarz in $k$ we obtain,
\begin{align}
\label{sec3:estimate_TL}
    & \abs{\Re  \sum_{k \in \mathbb{Z}^d} \int_{\mathbb{R}^d} 
    \overline{B_t(k,\eta) \partial_{\eta}^j \widehat{g}_t (k,\eta)}
    B_t(k,\eta)   \widehat{E}_t (k)\cdot \partial_{\eta}^j \widehat{\nabla_v \mu} (\eta - kt)   \ d\eta} \nonumber \\
    & \quad \leq C \jp{t} 
    \left(\sum_{|p|\le M}
    \norm{
    e^{\lambda \langle \eta \rangle} 
    \vert \partial_\eta^p\widehat{\mu}(\eta)\vert}_{L^\infty_\eta}
    \right)  \left( \sum_{k \in \mathbb{Z}^d} \abs{\widehat{A \varrho}_t (k)}^2 \right)^\frac{1}{2} \left( \sum_{k \in \mathbb{Z}^d} \int_{\mathbb{R}^d} B_t^2(k,\eta) \abs{\partial_{\eta}^j \widehat{g}_t (k,\eta)}^2  \ d \eta \right)^\frac{1}{2} \nonumber \\
    &\quad =C \jp{t}
    \left(\sum_{|p|\le M}
    \norm{
    e^{\lambda \langle \eta \rangle} 
    \vert \partial_\eta^p\widehat{\mu}(\eta)\vert}_{L^\infty_\eta}
    \right)  \norm{A_t (\nabla_x, t\nabla_x)  \varrho_t}_{L^2_x} \norm{B_t(\nabla_x, \nabla_v) (v^j g_t)}_{L^2_{x,v}}.
\end{align}

\textbf{Estimate on $\mathcal{T}_{NL}$: nonlinear term.}
We now treat the nonlinear term $\mathcal{T}_{NL}$ \eqref{sec3:T_NL}. Introducing commutators in the $\eta$-derivatives, we get
$ \mathcal{T}_{NL}=\mathcal{T}^{(1)}_{NL}+ \mathcal{T}^{(2)}_{NL},$ where
\begin{align*}
    \mathcal{T}^{(1)}_{NL}&:= - \sum_{k, \ell \in \mathbb{Z}^d} 
     \Re i \int_{\mathbb{R}^d} 
     \overline{B_t(k,\eta) \partial_{\eta}^j \widehat{g}_t (k,\eta)}
     B_t(k,\eta) (\eta - kt) \cdot \widehat{E}_{t} (\ell) \partial_{\eta}^j \widehat{g}_t(k - \ell ,\eta  - \ell t)     \ d\eta,\\
     \mathcal{T}^{(2)}_{NL} 
    &:= - \sum_{k, \ell \in \mathbb{Z}^d}
    \Re i \int_{\mathbb{R}^d} 
    \overline{B_t(k,\eta) \partial_{\eta}^j \widehat{g}_t (k,\eta)}
    B_t(k,\eta) 
    \partial_\eta^j \left( 
    (\eta-kt)\cdot
    \widehat{E}_{t} (\ell) \widehat{g}_t(k - \ell ,\eta  - \ell t)
    \right)\\
    &\quad  +
    \sum_{k, \ell \in \mathbb{Z}^d}
    \Re i \int_{\mathbb{R}^d} 
    \overline{B_t(k,\eta) \partial_{\eta}^j \widehat{g}_t (k,\eta)}
    B_t(k,\eta) 
     (\eta-kt)\cdot\widehat{E}_{t} (\ell) \partial_{\eta}^j\widehat{g}_t(k - \ell ,\eta  - \ell t) 
     \ d\eta.
\end{align*}

\emph{Nonlinear estimate on $\mathcal{T}_{NL}^{(1)}$:}
we introduce a commutator in the weight $B_t(k,\eta)$ which reflects the Hamiltonian structure of the equation:
\begin{align}
\label{sec3:NL}
    \mathcal{T}^{(1)}_{NL}  & = - \Re i  \sum_{k, \ell \in \mathbb{Z}^d}  \int_{\mathbb{R}^d}
    \overline{B_t(k,\eta) \partial_{\eta}^j \widehat{g}_t (k,\eta)}
    \left( B_t(k,\eta) - B_t(k-\ell,\eta-\ell t)\right) (\eta - kt) \cdot \widehat{E}_{t} (\ell) \partial_{\eta}^j \widehat{g}_t(k - \ell ,\eta  - \ell t)     \ d\eta \nonumber \\
    & - \Re i  \sum_{k, \ell \in \mathbb{Z}^d}  \int_{\mathbb{R}^d}
    \overline{B_t(k,\eta) \partial_{\eta}^j \widehat{g}_t (k,\eta)} 
    B_t(k-\ell,\eta-\ell t) (\eta - kt) \cdot \widehat{E}_{t} (\ell) \partial_{\eta}^j \widehat{g}_t(k - \ell ,\eta  - \ell t)    \ d\eta.
\end{align}
We can show that the last term in \eqref{sec3:NL} is zero: indeed by the change of variables $\ell \rightarrow -\ell$ and $k \rightarrow k-\ell$, we get
\begin{align*}
    & - \Re  \sum_{k, \ell \in \mathbb{Z}^d}  \int_{\mathbb{R}^d} 
    \overline{B_t(k,\eta) \partial_{\eta}^j \widehat{g}_t (k,\eta)} 
    B_t(k-\ell,\eta-\ell t) (\eta - kt) \cdot i\widehat{E}_{t} (\ell) \partial_{\eta}^j \widehat{g}_t(k - \ell ,\eta  - \ell t)    \ d\eta  \\
   & \quad = - \Re  \sum_{k, \ell \in \mathbb{Z}^d}  \int_{\mathbb{R}^d}
   \overline{B_t(k-\ell,\eta) \partial_{\eta}^j \widehat{g}_t (k-\ell,\eta)}  
   B_t(k,\eta + \ell t) (\eta - (k-\ell)t) \cdot i\widehat{E}_{t} (-\ell) \partial_{\eta}^j \widehat{g}_t(k ,\eta  + \ell t)   \ d\eta  \\
   & \quad =  \Re \sum_{k, \ell \in \mathbb{Z}^d}  \int_{\mathbb{R}^d}
   B_t(k-\ell,\eta- \ell t) \partial_{\eta}^j \widehat{g}_t (k-\ell,\eta- \ell t)
    \overline{B_t(k,\eta) (\eta - kt) \cdot i \widehat{E}_{t} (\ell) \partial_{\eta}^j \widehat{g}_t(k ,\eta )}
   \ 
   d\eta
   = 0,
\end{align*}
where we applied the change of variable, $\eta \mapsto \eta - \ell t$ and the fact that $ i \widehat{E}_t (-\ell) = - \overline{i \widehat{E}_t (\ell) }$ for the third equality. We deduce that the expression is zero noting that $\Re (z_1 \overline{z_2}) = \Re (\overline{z_1}z_2)$ for $z_1, z_2 \in \mathbb{C}$.

Next, let us treat the first term of \eqref{sec3:NL}. We use the usual frequency decomposition as in \eqref{slpitting},
\begin{align*}
    & \abs{
    \Re i \sum_{k, \ell \in \mathbb{Z}^d}  \int_{\mathbb{R}^d}
    \overline{B_t(k,\eta) \partial_{\eta}^j \widehat{g}_t (k,\eta)}
    (B_t(k,\eta) - B_t(k-\ell,\eta-\ell t))
    (\eta - kt) \cdot \widehat{E}_{t} (\ell)
    \partial_{\eta}^j \widehat{g}_t(k - \ell ,\eta  - \ell t)     \ d\eta} \\
    & \leq
    \sum_{k, \ell \in \mathbb{Z}^d} 
    \int_{\mathbb{R}^d} 
    \left(\mathbbm{1}_{\abs{\ell, \ell t} \le \frac{1}{2} \abs{k-\ell,\eta-\ell t}} 
    + \mathbbm{1}_{\abs{k-\ell,\eta-\ell t} \le \frac{1}{2} \abs{\ell, \ell t}}
    + \mathbbm{1}_{\abs{\ell, \ell t} \approx \abs{k-\ell,\eta-\ell t}}\right) \\
    & \quad \times \abs{B_t(k,\eta) \partial_{\eta}^j \widehat{g}_t (k,\eta)} \abs{ B_t(k,\eta) - B_t(k-\ell,\eta-\ell t)} \abs{\eta - kt}  \abs{\widehat{E}_{t} (\ell)} \abs{\partial_{\eta}^j \widehat{g}_t(k - \ell ,\eta  - \ell t)}     \ d\eta \\
    & =:\mathcal{T}_{LH}+\mathcal{T}_{HL}+\mathcal{T}_{HH}.
\end{align*}

Term $\mathcal{T}_{LH}$:
since $\abs{\ell, \ell t} \le \frac{1}{2} \abs{k-\ell,\eta-\ell t}$, we get by triangle inequality,
\begin{align*}
    \jp{k,\eta} \leq \jp{\ell, \ell t} + \jp{k-\ell, \eta - \ell t} \leq 2 \jp{k-\ell, \eta - \ell t}.
\end{align*}
Moreover, we have 
\begin{align*}
\abs{\jp{k-\ell, \eta - \ell t} - \jp{k,\eta}} \leq \jp{\ell, \ell t} \leq  \frac{\jp{k-\ell, \eta - \ell t}}{2}.
\end{align*}
As a consequence, we can apply Lemma \ref{lemma_gevrey_inequality_3} with $x = \jp{k-\ell, \eta - \ell t}$ and $y = \jp{k,\eta}$ to bound the difference of the weight as follows
\begin{align*}
    & \abs{ B_t(k,\eta) - B_t(k-\ell,\eta-\ell t)}
     = \abs{e^{\lambda(t) \jp{ k, \eta }^{\gamma} } \jp{k, \eta}^{ \sigma+1}  - e^{\lambda(t) \jp{k-\ell, \eta - \ell t}^{\gamma} } \jp{k-\ell, \eta - \ell t}^{ \sigma+1}} \\
    & \qquad \leq C \jp{k-\ell, \eta - \ell t}^{ \sigma+1} \abs{e^{\lambda(t) \jp{ k, \eta }^{\gamma} }  - e^{\lambda(t) \jp{k-\ell, \eta - \ell t}^{\gamma} } } \\
    & \qquad \leq C \lambda(t) \jp{k-\ell, \eta - \ell t}^{ \sigma+1} \frac{\abs{\jp{k-\ell, \eta - \ell t} - \jp{k,\eta}}}{\jp{k-\ell, \eta - \ell t}^{1-\gamma} + \jp{k,\eta}^{1-\gamma}} e^{c  \lambda(t) \abs{\jp{k-\ell, \eta - \ell t} - \jp{k,\eta}}^\gamma} e^{ \lambda(t)  \jp{k-\ell, \eta - \ell t}^\gamma},
\end{align*}
where $c \in (0,1)$.
Then, using that $\lambda(t) \leq \lambda$ and $\abs{\jp{k-\ell, \eta - \ell t} - \jp{k,\eta}} \leq \jp{\ell, \ell t}$, we obtain
\begin{align*}
    & \abs{ B_t(k,\eta) - B_t(k-\ell,\eta-\ell t)}
     \leq C \lambda \frac{\jp{\ell, \ell t}}{\jp{k-\ell, \eta - \ell t}^{1-\gamma} + \jp{k,\eta}^{1-\gamma}} e^{c  \lambda(t) \jp{\ell,\ell t}^\gamma} B_t (k-\ell, \eta - \ell t).
\end{align*}
Hence,
\begin{align*}
    \mathcal{T}_{LH}
    & \leq C \sum_{k, \ell \in \mathbb{Z}^d}  \int_{\mathbb{R}^d} \mathbbm{1}_{\abs{\ell, \ell t} \le \frac{1}{2} \abs{k-\ell,\eta-\ell t}}  \frac{\jp{\ell, \ell t}}{\jp{k-\ell, \eta - \ell t}^{1-\gamma} + \jp{k,\eta}^{1-\gamma}} e^{c  \lambda(t) \jp{\ell,\ell t}^\gamma} B_t (k-\ell, \eta - \ell t) \\
    & \quad \times  \abs{\eta - kt}  \abs{\widehat{E}_{t} (\ell)} \abs{\partial_{\eta}^j \widehat{g}_t(k - \ell ,\eta  - \ell t)}   \abs{B_t(k,\eta) \partial_{\eta}^j \widehat{g}_t (k,\eta)}  \ d\eta.
\end{align*}
Since we have
$
  \jp{\ell, \ell t} \abs{\widehat{E}_t (\ell)}
  \leq C \jp{t} \abs{\widehat{\varrho}_t(\ell)},
$
we obtain
\begin{align*}
    \mathcal{T}_{LH}
    & \leq C \jp{t} \sum_{k, \ell \in \mathbb{Z}^d}  \int_{\mathbb{R}^d}
    \mathbbm{1}_{\abs{\ell, \ell t} \le \frac{1}{2} \abs{k-\ell,\eta-\ell t}} 
    \frac{|\eta -kt|}{\jp{k-\ell, \eta - \ell t}^{1-\gamma} + \jp{k,\eta}^{1-\gamma}} e^{c  \lambda(t) \jp{\ell,\ell t}^\gamma} 
    \abs{\widehat{\varrho}_t(\ell)} \\
    & \quad \times
    B_t (k-\ell, \eta - \ell t)
     \abs{\partial_{\eta}^j \widehat{g}_t(k - \ell ,\eta  - \ell t)}   \abs{B_t(k,\eta) \partial_{\eta}^j \widehat{g}_t (k,\eta)}  \ d\eta.
\end{align*}
Since 
\begin{equation*}
    \frac{|\eta -kt|}{\jp{k-\ell, \eta - \ell t}^{1-\gamma}+ \jp{k, \eta}^{1-\gamma}}\le \jp{t} \jp{k,\eta}^{\frac{\gamma}{2}}
    \jp{k-\ell, \eta-\ell t}^{\frac{\gamma}{2}},
\end{equation*}
we get
\begin{align*}
    \mathcal{T}_{LH}
    & \leq C \jp{t}^{-2} \sum_{k, \ell \in \mathbb{Z}^d}  \int_{\mathbb{R}^d}
    \mathbbm{1}_{\abs{\ell, \ell t} \le \frac{1}{2} \abs{k-\ell,\eta-\ell t}} 
     e^{-(1-c)  \lambda(t) \jp{\ell,\ell t}^\gamma} 
     \jp{t}^4 \jp{\ell,\ell t}^{-\sigma}
    \abs{ \widehat{A \varrho}_t(\ell)} \\
    & \times
    \jp{k-\ell, \eta-\ell t}^{\frac{\gamma}{2}}
     \abs{B_t(k,\eta) \partial_{\eta}^j \widehat{g}_t(k - \ell ,\eta  - \ell t)}
     \jp{k,\eta}^{\frac{\gamma}{2}}
      \abs{B_t(k,\eta) \partial_{\eta}^j \widehat{g}_t (k,\eta)}  \ d\eta.
\end{align*}
Hence, by \eqref{sec3:lemma_Bedrossian_estimate_1} with $\theta=\frac{d}{2}+1$, we get
\begin{align}
\label{sec3:estimate_T_LH}
    \mathcal{T}_{LH}
    & \leq C \jp{t}^{-2}  
    \norm{\jp{\nabla_x}^{-\sigma+\frac{d}{2}+5} A\varrho}_{L^2_x}
    \norm{\jp{\nabla_x , \nabla_v}^{\gamma/2} B_t(\nabla_x, \nabla_v) (v^j g_t)}_{L^2_{x,v}}^2 \nonumber \\
    & \leq C \jp{t}^{-2}  
    \norm{A\varrho}_{L^2_x}
    \norm{\jp{\nabla_x , \nabla_v}^{\gamma/2} B_t(\nabla_x, \nabla_v) (v^j g_t)}_{L^2_{x,v}}^2,
\end{align}
thanks to the assumption  $\sigma>10 + d$.

Term $\mathcal{T}_{HL}$: in this case, we do not need the difference inside the commutator, and we take the absolute values.
\begin{align*}
    \mathcal{T}_{HL} 
    & \leq   \sum_{k, \ell \in \mathbb{Z}^d} 
    \int_{\mathbb{R}^d}  
    \mathbbm{1}_{\abs{k-\ell,\eta-\ell t} \le \frac{1}{2} \abs{\ell, \ell t}}
     \left(  B_t(k,\eta) + B_t(k-\ell,\eta-\ell t) \right)\\
    & \quad \times \abs{B_t(k,\eta) \partial_{\eta}^j \widehat{g}_t (k,\eta)}
    \abs{\eta - kt}  \abs{\widehat{E}_{t} (\ell)} \abs{\partial_{\eta}^j \widehat{g}_t(k - \ell ,\eta  - \ell t)}     \ d\eta =: \mathcal{T}_{HL}^{(1)} + \mathcal{T}_{HL}^{(2)}.
\end{align*}
We estimate the first term as follows.
\begin{align*}
    \mathcal{T}_{HL}^{(1)}
    & :=  \sum_{k, \ell \in \mathbb{Z}^d} 
    \int_{\mathbb{R}^d}  
    \mathbbm{1}_{\abs{k-\ell,\eta-\ell t} \le \frac{1}{2} \abs{\ell, \ell t}}
    B_t(k,\eta)  \abs{B_t(k,\eta) \partial_{\eta}^j \widehat{g}_t (k,\eta)}
    \abs{\eta - kt}  \abs{\widehat{E}_{t} (\ell)} \abs{\partial_{\eta}^j \widehat{g}_t(k - \ell ,\eta  - \ell t)}     \ d\eta \\
    & \leq C \sum_{k, \ell \in \mathbb{Z}^d} 
    \int_{\mathbb{R}^d}  
    \mathbbm{1}_{\abs{k-\ell,\eta-\ell t} \le \frac{1}{2} \abs{\ell, \ell t}}
    \left( \jp{\ell, \ell t}^{\sigma+1} + \jp{k-\ell,\eta-\ell t}^{\sigma+1} \right) e^{\jp{\ell, \ell t}^\gamma} e^{\jp{k-\ell,\eta-\ell t}^\gamma}
    \\
    & \quad \times \abs{B_t(k,\eta) \partial_{\eta}^j \widehat{g}_t (k,\eta)}
    \abs{\eta - kt}  \abs{\widehat{E}_{t} (\ell)} \abs{\partial_{\eta}^j \widehat{g}_t(k - \ell ,\eta  - \ell t)}     \ d\eta,
\end{align*}
where we used the triangle inequality for $B_t(k,\eta)$.
For $\jp{\ell,\ell t}^{\sigma+1}$, we use 
$
  \jp{\ell, \ell t} \abs{\widehat{E}_t (\ell)}
  \leq C \jp{t} \abs{\widehat{\varrho}_t(\ell)}
$
and 
\begin{align}
\label{sec3:eta-kt}
    \abs{\eta - kt} \leq \abs{\eta - \ell t} + \abs{\ell - k} \jp{t} \leq \jp{t} \jp{k - \ell, \eta-\ell t}.
\end{align}
For $\jp{k-\ell,\eta-\ell t}^{\sigma+1}$, we use the same bound for the electric field and $\abs{\eta - kt} \leq  \jp{t} \jp{\ell,\ell t}$ thanks to \eqref{sec3:eta-kt} and the region of the indicator function.
Therefore,
\begin{align*}
    &\mathcal{T}_{HL}^{(1)} \\
    & \leq C \jp{t}^2 \sum_{k, \ell \in \mathbb{Z}^d} 
    \int_{\mathbb{R}^d}  
    \mathbbm{1}_{\abs{k-\ell,\eta-\ell t} \le \frac{1}{2} \abs{\ell, \ell t}}
     \abs{B_t(k,\eta) \partial_{\eta}^j \widehat{g}_t (k,\eta)}
      \abs{\widehat{A \varrho}_{t} (\ell)} \jp{k - \ell, \eta-\ell t} e^{\jp{k-\ell,\eta-\ell t}^\gamma}  \abs{\partial_{\eta}^j \widehat{g}_t(k - \ell ,\eta  - \ell t)}     \ d\eta \\
      & + C \jp{t}^2 \sum_{k, \ell \in \mathbb{Z}^d} 
    \int_{\mathbb{R}^d}  
    \mathbbm{1}_{\abs{k-\ell,\eta-\ell t} \le \frac{1}{2} \abs{\ell, \ell t}} 
    \abs{B_t(k,\eta) \partial_{\eta}^j \widehat{g}_t (k,\eta)}
    \jp{\ell,\ell t}  e^{\jp{\ell, \ell t}^\gamma} \abs{\widehat{\varrho}_{t} (\ell)} \abs{B_t(k-\ell,\eta-\ell t)\partial_{\eta}^j \widehat{g}_t(k - \ell ,\eta  - \ell t)}     \ d\eta.
\end{align*}
By using \eqref{sec3:lemma_Bedrossian_estimate_2} with $\theta=\sigma$, we can bound the first term allowing extra spatial moment on $\abs{\partial_{\eta}^j \widehat{g}_t(k - \ell ,\eta  - \ell t)}$ and for the second term, we use \eqref{sec3:lemma_Bedrossian_estimate_1} with $\theta = \sigma-1$ allowing extra spatial moment on $\abs{\widehat{\varrho}_{t} (\ell)}$. Hence,
\begin{align}
\label{sec3:estimates_T_HL_1}
    \mathcal{T}_{HL}^{(1)} 
    & \leq C \jp{t}^2 \norm{A_t (\nabla_x, t\nabla_x) \varrho_t}_{L^2_x} \norm{B_t(\nabla_x, \nabla_v) (v^j g_t)}_{L^2_{x,v}}^2.
\end{align}
The second term is easier to treat. We have
\begin{align*}
    \mathcal{T}_{HL}^{(2)} 
    & :=
    \sum_{k, \ell \in \mathbb{Z}^d} 
    \int_{\mathbb{R}^d}  
    \mathbbm{1}_{\abs{k-\ell,\eta-\ell t} \le \frac{1}{2} \abs{\ell, \ell t}} \abs{B_t(k,\eta) \partial_{\eta}^j \widehat{g}_t (k,\eta)}
    \abs{\eta - kt}  \abs{\widehat{E}_{t} (\ell)} \abs{B_t(k-\ell,\eta-\ell t) \partial_{\eta}^j \widehat{g}_t(k - \ell ,\eta  - \ell t)}     \ d\eta.
\end{align*}
Note that we have again
$\abs{\eta - kt} \leq \jp{t} \jp{\ell, \ell t}$
thanks to \eqref{sec3:eta-kt} and the region of the indicator function. Therefore,
\begin{align*}
    \abs{\eta - kt} \abs{\widehat{E}_t (\ell)}
    \leq \jp{t} \jp{\ell, \ell t} \abs{\widehat{E}_t (\ell)}
  \leq C \jp{t}^2 \abs{\widehat{\varrho}_t(\ell)}.
\end{align*}
Thus,
\begin{align*}
    \mathcal{T}_{HL}^{(2)} 
    & \leq C \jp{t}^2
    \sum_{k, \ell \in \mathbb{Z}^d} 
    \int_{\mathbb{R}^d}  
    \mathbbm{1}_{\abs{k-\ell,\eta-\ell t} \le \frac{1}{2} \abs{\ell, \ell t}} \abs{B_t(k,\eta) \partial_{\eta}^j \widehat{g}_t (k,\eta)}
     \abs{\widehat{\varrho}_t(\ell)}
     \abs{B_t(k-\ell,\eta-\ell t) \partial_{\eta}^j \widehat{g}_t(k - \ell ,\eta  - \ell t)}     \ d\eta.
\end{align*}
Hence, by applying  \eqref{sec3:lemma_Bedrossian_estimate_1} with $\theta = \sigma$, we get
\begin{align}
\label{sec3:estimates_T_HL_2}
    \mathcal{T}_{HL}^{(2)} 
    & \leq C \jp{t}^2 \norm{A_t (\nabla_x, t\nabla_x) \varrho_t}_{L^2_x} \norm{B_t(\nabla_x, \nabla_v) (v^j g_t)}_{L^2_{x,v}}^2.
\end{align}

Term $\mathcal{T}_{HH}$: this one is estimated as $\mathcal{T}_{HL}$. Note that in this region, we have $\frac{1}{2}\abs{\ell, \ell t} \leq \abs{k-\ell,\eta-\ell t} \leq 2\abs{\ell, \ell t}$. Therefore, \eqref{sec3:eta-kt} also holds with a factor 2. That is
\begin{align*}
    \abs{\eta - kt} \leq \abs{\eta - \ell t} + \abs{\ell - k} \jp{t} \leq  \jp{t} \jp{k - \ell, \eta-\ell t} \leq 2 \jp{\ell, \ell t}.
\end{align*}
Then, we can repeat the same analysis by splitting $\mathcal{T}_{HH}$ with $\mathcal{T}_{HH}^{(1)}$ and $\mathcal{T}_{HH}^{(2)}$ as we did with $\mathcal{T}_{HL}$. Hence,
\begin{align}
\label{sec3:estimates_T_HH}
    \mathcal{T}_{HH} 
    & \leq C \jp{t}^2 \norm{A_t (\nabla_x, t\nabla_x) \varrho_t}_{L^2_x} \norm{B_t(\nabla_x, \nabla_v) (v^j g_t)}_{L^2_{x,v}}^2.
\end{align}

\emph{Nonlinear estimate on $\mathcal{T}^{(2)}_{NL}$:}
with the same techniques, we can treat the commutator term in the $\eta$-derivatives. In this case we have
\begin{align*}
 \abs{\mathcal{T}_{NL}^{(2)}} &\le C \sum_{k, \ell \in \mathbb{Z}^d}
 \sum_{0\le \abs{p} < \abs{j}}
    \int_{\mathbb{R}^d} 
     \abs{
     B_t(k,\eta) \partial_{\eta}^j \widehat{g}_t (k,\eta)
     B_t(k,\eta) 
     \widehat{E}_{t} (\ell) \partial_{\eta}^p
     \widehat{g}_t(k - \ell ,\eta  - \ell t) }\ d\eta\\
      &\le  C  \sum_{k, \ell \in \mathbb{Z}^d}
 \sum_{0\le \abs{p} < \abs{j}}
    \int_{\mathbb{R}^d} 
      \left(\mathbbm{1}_{\abs{\ell, \ell t} \le \frac{1}{2} \abs{k-\ell,\eta-\ell t}} 
    + \mathbbm{1}_{\abs{k-\ell,\eta-\ell t} \le \frac{1}{2} \abs{\ell, \ell t}}
    + \mathbbm{1}_{\abs{\ell, \ell t} \approx \abs{k-\ell,\eta-\ell t}}
    \right)\\
     &\times \abs{B_t(k,\eta) \partial_{\eta}^j \widehat{g}_t (k,\eta)
     B_t(k,\eta) 
     \widehat{E}_{t} (\ell) \partial_{\eta}^p
     \widehat{g}_t(k - \ell ,\eta  - \ell t)}
     \ d\eta. 
\end{align*}
We bound the first term with $\mathbbm{1}_{\abs{\ell, \ell t} \le \frac{1}{2} \abs{k-\ell,\eta-\ell t}}$, by using the triangle inequality 
\begin{align*}
    \jp{k,\eta} \leq \jp{\ell, \ell t} + \jp{k-\ell, \eta - \ell t} \leq 2 \jp{k-\ell, \eta - \ell t},
\end{align*}
and 
\begin{align*}
    B_t(k,\eta) \leq C \jp{k-\ell, \eta - \ell t}^{\sigma+1} e^{\lambda(t) \jp{k-\ell, \eta - \ell t}^{\gamma}} e^{\lambda(t) \jp{\ell, \ell t}^{\gamma}}.
\end{align*}
Therefore, using $\abs{\widehat{E}_{t} (\ell)} \leq \abs{\widehat{\varrho}_{t} (\ell)}$, we get
\begin{align}
\label{sec3:estimates_T_NL_2_a}
    & C  \sum_{k, \ell \in \mathbb{Z}^d}
    \sum_{0\le \abs{p} < \abs{j}}
    \int_{\mathbb{R}^d} 
    \mathbbm{1}_{\abs{\ell, \ell t} \le \frac{1}{2} \abs{k-\ell,\eta-\ell t}} \abs{B_t(k,\eta) \partial_{\eta}^j \widehat{g}_t (k,\eta)
     B_t(k,\eta) 
     \widehat{E}_{t} (\ell) \partial_{\eta}^p
     \widehat{g}_t(k - \ell ,\eta  - \ell t)}
     \ d\eta \nonumber\\
    & \leq  C  \sum_{k, \ell \in \mathbb{Z}^d}
 \sum_{0\le \abs{p} < \abs{j}}
    \int_{\mathbb{R}^d} 
    \mathbbm{1}_{\abs{\ell, \ell t} \le \frac{1}{2} \abs{k-\ell,\eta-\ell t}} \abs{B_t(k,\eta) \partial_{\eta}^j \widehat{g}_t (k,\eta)} \\
    &\qquad \times  e^{\lambda(t) \jp{\ell, \ell t}^{\gamma}} \abs{\widehat{\varrho}_{t} (\ell)}
     B_t (k-\ell,\eta-\ell t)
     \abs{\partial_{\eta}^p
     \widehat{g}_t(k - \ell ,\eta  - \ell t)}
     \ d\eta \nonumber \\
     & \leq C \norm{A_t (\nabla_x, t\nabla_x) \varrho_t}_{L^2_x} \norm{B_t(\nabla_x, \nabla_v) (v^j g_t)}_{L^2_{x,v}}^2,
\end{align}
where we used \eqref{sec3:lemma_Bedrossian_estimate_1} with $\theta = \sigma$ for the last inequality.

The second term with $\mathbbm{1}_{\abs{k-\ell,\eta-\ell t} \le \frac{1}{2} \abs{\ell, \ell t}}$ can be treated exactly as $\mathcal{T}_{HL}^{(1)}$ above. Note, however, that in this case, we do not have the factor $\abs{\eta - kt}$ and therefore only have one power of $\jp{t}$ and not $\jp{t}^2$ as before. That is  
\begin{align}
\label{sec3:estimates_T_NL_2_b}
    & C  \sum_{k, \ell \in \mathbb{Z}^d}
 \sum_{0\le \abs{p} < \abs{j}}
    \int_{\mathbb{R}^d} 
    \mathbbm{1}_{\abs{k-\ell,\eta-\ell t} \le \frac{1}{2} \abs{\ell, \ell t}} \abs{B_t(k,\eta) \partial_{\eta}^j \widehat{g}_t (k,\eta)
     B_t(k,\eta) 
     \widehat{E}_{t} (\ell) \partial_{\eta}^p
     \widehat{g}_t(k - \ell ,\eta  - \ell t)}
     \ d\eta \nonumber \\
     & \leq C \jp{t} \norm{A_t (\nabla_x, t\nabla_x) \varrho_t}_{L^2_x} \norm{B_t(\nabla_x, \nabla_v) (v^j g_t)}_{L^2_{x,v}}^2.
\end{align}
Using the same argument as before, the term with $\mathbbm{1}_{\abs{\ell, \ell t} \approx \abs{k-\ell,\eta-\ell t}}$ is treated similarly.

Hence, by collecting the estimates 
\eqref{sec3:estimate_TL},
\eqref{sec3:estimate_T_LH},
\eqref{sec3:estimates_T_HL_1},
\eqref{sec3:estimates_T_HL_2},
\eqref{sec3:estimates_T_HH},
\eqref{sec3:estimates_T_NL_2_a},
and \eqref{sec3:estimates_T_NL_2_b} we get estimate \eqref{sec3:bound T_L+T_NL}.
\end{proof}

\section{Construction of solutions: proof of Theorem \ref{Main_thm} }
\label{iterative scheme}

In this Section, we conclude our analysis, constructing a solution to the Vlasov-type system \eqref{nlequation_g} with given asymptotic state $g_\infty$ as in \eqref{norm_g_infty}.

Prior to this, we state two useful results to treat the nonlinearity inside the Poisson coupling in \eqref{nlequation_g}.

\begin{lem}
\label{sec3:lemma_h(U)}
Let $h: \br \rightarrow \br$ be an analytic function with analyticity radius $R>0$,
\begin{align*}
    h(y) = \sum_{n \geq 2} a_n y^n, \quad \{a_n\} \subset \mathbb{R},
\end{align*}
and we define $\widetilde{h}(y):= \sum_{n \geq 2} \vert a_n \vert y^n$.
Let $\om: \mathbb{T}^d \to \mathbb{R} $ be a Gevrey function such that 
$\left \Vert A_z(\nabla_x,t \nabla_x)  \om
\right \Vert_{ L_x^2} <R$, 
where $A_z(\nabla_x, t \nabla_x)$ is defined in \eqref{sec1:A_z} with $\sigma>d/2$.
   Then
    \begin{equation*}
        \left \Vert A_z(\nabla_x,t \nabla_x) h(\om)
        \right \Vert_{L_x^2} \leq \tilde{h} \left(C \left \Vert A_z(\nabla_x,t \nabla_x)  \om
        \right \Vert_{ L_x^2} \right),
    \end{equation*}  
    for a constant $C$.
\end{lem}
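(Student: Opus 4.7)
The plan is to use the fact that the Gevrey-Sobolev space defined by the weight $A_z(\nabla_x, t\nabla_x)$ is a Banach algebra (when $\sigma > d/2$), expand $h$ as its power series, and sum the resulting geometric-type bound.

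More precisely, I would first establish the key algebra estimate: for any two functions $f, g: \bt^d \to \br$ with finite Gevrey norm,
\begin{equation*}
    \left\Vert A_z(\nabla_x, t\nabla_x) (fg)\right\Vert_{L^2_x} \leq C \left\Vert A_z(\nabla_x, t\nabla_x) f\right\Vert_{L^2_x} \left\Vert A_z(\nabla_x, t\nabla_x) g\right\Vert_{L^2_x},
\end{equation*}
with $C=C(\sigma,\gamma,d)$. To prove this, I would write $\widehat{fg}(k) = (2\pi)^{-d} \sum_\ell \widehat{f}(\ell)\widehat{g}(k-\ell)$ and estimate $A_z(k,kt)$ pointwise by splitting the convolution into the regions $\{\abs{\ell,\ell t} \leq \abs{k-\ell,(k-\ell)t}\}$ and its complement. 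In the first region, the Gevrey subadditivity from \eqref{gevrey_inequality_4} gives $e^{z\langle k,kt\rangle^\gamma} \leq e^{z\langle \ell,\ell t\rangle^\gamma} e^{z\langle k-\ell,(k-\ell)t\rangle^\gamma}$, and the triangle inequality yields $\langle k,kt\rangle^\sigma \leq 2^\sigma \langle k-\ell,(k-\ell)t\rangle^\sigma$, so that $A_z(k,kt) \leq C\, e^{z\langle \ell,\ell t\rangle^\gamma} A_z(k-\ell,(k-\ell)t)$. Then Young's inequality $\ell^1 \ast \ell^2 \hookrightarrow \ell^2$ together with Cauchy-Schwarz
\begin{equation*}
    \sum_{\ell} e^{z\langle \ell,\ell t\rangle^\gamma} \abs{\widehat{f}(\ell)} \leq \left(\sum_\ell \langle \ell,\ell t\rangle^{-2\sigma}\right)^{1/2} \left\Vert A_z \widehat f\right\Vert_{L^2_\ell}
\end{equation*}
closes the estimate, where the first sum is finite precisely because $\sigma > d/2$. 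The symmetric region is handled analogously by exchanging the roles of $f$ and $g$.

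Iterating this algebra estimate $n-1$ times produces $\left\Vert A_z \omega^n\right\Vert_{L^2_x} \leq C^{n-1} \left\Vert A_z \omega\right\Vert_{L^2_x}^n$ for every $n \geq 2$. Setting $y := C\left\Vert A_z \omega\right\Vert_{L^2_x}$, which lies below the radius of convergence of $\tilde h$ (possibly after replacing $R$ by $R/C$ in the hypothesis), the triangle inequality applied termwise to $h(\omega) = \sum_{n \geq 2} a_n \omega^n$ yields
\begin{equation*}
    \left\Vert A_z(\nabla_x, t\nabla_x) h(\omega) \right\Vert_{L^2_x} \leq \sum_{n\geq 2} \abs{a_n} C^{n-1} \left\Vert A_z \omega\right\Vert_{L^2_x}^n = \frac{1}{C}\, \tilde h\bigl(C \left\Vert A_z \omega\right\Vert_{L^2_x}\bigr) \leq \tilde h\bigl(C \left\Vert A_z \omega\right\Vert_{L^2_x}\bigr),
\end{equation*}
which is the stated bound. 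The main technical point is the algebra estimate, and in particular the careful use of the refined Gevrey inequality \eqref{gevrey_inequality_4} to distribute the exponential weight across the convolution without losing any regularity; the Sobolev condition $\sigma > d/2$ is then exactly what is needed to convert an $\ell^2$ estimate on the weighted Fourier coefficients into the $\ell^1$ summability required by Young's inequality.
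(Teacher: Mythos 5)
Your argument is correct and follows the same overall strategy as the paper: establish the Banach-algebra property of the Gevrey-Sobolev space weighted by $A_z$, iterate to bound $\|A_z\omega^n\|_{L^2_x}$, then sum the power series. The only technical difference is in how you prove the algebra estimate: you split the convolution into high--low and low--high frequency regions and invoke Young's inequality $\ell^1 * \ell^2 \hookrightarrow \ell^2$, whereas the paper applies Cauchy--Schwarz directly on the convolution sum, splitting the weight $A_z(k,kt)$ symmetrically over the two factors and reducing matters to the kernel bound $\sum_\ell A_z^2(k,kt)A_z^{-2}(\ell,\ell t)A_z^{-2}(k-\ell,(k-\ell)t)\le C$; both routes rely on the same subadditivity of $e^{z\langle\cdot\rangle^\gamma}$ and on $\sigma>d/2$ for summability, and are entirely interchangeable here. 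One small inaccuracy: you cite \eqref{gevrey_inequality_4} for the exponential splitting, but what you actually use is the plain subadditivity $\langle k,kt\rangle^\gamma \le \langle\ell,\ell t\rangle^\gamma + \langle k-\ell,(k-\ell)t\rangle^\gamma$ (the improvement $c<1$ in \eqref{gevrey_inequality_4} is not needed); your explicit caveat about replacing $R$ with $R/C$ is correct and is glossed over by the paper until the end of its own proof.
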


\begin{proof}
Let us first prove the inequality in the case $h(y)=y^2$:
\begin{align}
\label{Algebra_x2}
     \left \Vert  A_z(\nabla_x,t \nabla_x)   \om^2
   \right \Vert_{ L_x^2} 
   & \leq C \left \Vert   A_z(\nabla_x,t \nabla_x)  \om
   \right \Vert_{ L_x^2}^2.
\end{align}
By direct computation, we have
\begin{align*}
    &\left( 
    \sum_{k \in \bz^d}  A_z^2(k,kt) \vert \widehat{\om^2}(k) \vert^2 \right)^{\frac{1}{2}}
   = \left( \sum_{k \in \bz^d}  A_z^2(k,kt)  \left \vert \sum_{\ell \in \bz^d} \widehat{\om}(k-\ell) \widehat{\om}(\ell) \right \vert^2 \right)^{\frac{1}{2}} \\
   & \quad= \left( \sum_{k \in \bz^d} \left \vert \sum_{\ell \in \bz^d} A_z(k-\ell ,(k-\ell) t) \widehat{\om}(k-\ell)   A_z(\ell ,\ell t) \widehat{\om}(\ell) \frac{ A_z(k,kt)}{A_z(\ell ,\ell t)A_z(k-\ell ,(k-\ell) t)} \right \vert^2 \right)^{\frac{1}{2}}.
\end{align*}
We estimate the sum over $\ell$ by using Cauchy-Schwarz and the following claim
\begin{equation}
\label{sec2:Lemma_weight_A}
S:=\sum_{\ell \in \mathbb{Z}^d} A_z^{2}(k ,k t)A_z^{-2}(\ell ,\ell t)A_z^{-2}(k-\ell ,(k-\ell) t)  \leq C,
\end{equation}
which follows by  inequalities
$$
e^{z \langle k, kt \rangle^{\gamma}} \leq e^{z \langle \ell, \ell t \rangle^{\gamma}} e^{z \langle k-\ell, (k-\ell)t \rangle^{\gamma}},
\quad 
\jp{k,kt}^\sigma \le C \left(
\jp{l,lt}^{\sigma} + \jp{k-l,(k-l)t}^\sigma
\right),
$$
and that, for $\sigma>\frac{d}{2}$,
\begin{align*}
S 
\le \sum_{\ell \in \mathbb{Z}^d }  \frac{\langle k, kt \rangle^{2\sigma} }{\langle \ell, \ell t \rangle^{2\sigma} \langle k-\ell, (k-\ell)t \rangle^{2\sigma} } 
\leq C\sum_{\ell \in \mathbb{Z}^d}
\left(
\frac{1}{\jp{\ell,\ell t}^{2\sigma}} + \frac{1}{\jp{k-\ell, (k-\ell)t}^{2\sigma}}
\right)
< +\infty.
\end{align*}
Therefore, by \eqref{sec2:Lemma_weight_A},
\begin{align*}
\norm{A_z(\nabla_x,t \nabla_x)  \om^2}_{L_x^2}
    & \leq C \left( \sum_{k \in \bz^d}  \sum_{\ell \in \bz^d} A_z^2(k-\ell ,(k-\ell) t)  \abs{ \widehat{\om}(k-\ell)}^2 A_z^2(\ell ,\ell t)  \abs{ \widehat{\om}(\ell)}^2  \right)^{\frac{1}{2}}\\
   & = C \left( \sum_{\ell \in \bz^d} A_z^2(\ell ,\ell t)    \abs{ \widehat{\om}(\ell)}^2  \left( \sum_{k \in \bz^d}  A_z^2(k-\ell ,(k-\ell) t)  \abs{ \widehat{\om}(k-\ell)}^2  \right) \right)^{\frac{1}{2}} \\
   & = C\sum_{k \in \bz^d}   A_z^2(k ,kt)  \abs{ \widehat{\om}(k)}^2   = C \left \Vert  A_z(\nabla_x,t \nabla_x)  \om
   \right \Vert_{ L_x^2}^2.
\end{align*}
Iterating \eqref{Algebra_x2} for $h(y)=y^n$ with $n \in \bn_{\geq 2}$, we get
\begin{align}
\label{Algebra_xn}
      \left \Vert  A_z(\nabla_x,t \nabla_x)  \om^n
   \right \Vert_{ L_x^2} 
   \leq  C^{n-1}\left \Vert  A_z(\nabla_x,t \nabla_x)  \om
   \right \Vert_{ L_x^2}^n.
\end{align}
Using that
$	h(y) = \sum_{n \geq 2} a_n y^n$,
we conclude that
\begin{align*}
    \left \Vert  A_z(\nabla_x,t \nabla_x) 
    h(\om)
   \right \Vert_{L_x^2} 
    \leq   \sum_{n \geq 2} \abs{a_n} \left \Vert A_z(\nabla_x,t \nabla_x) \om^n 
   \right \Vert_{L_x^2}
   \leq  \widetilde{h} \left(C \left \Vert A_z(\nabla_x,t \nabla_x)     \om
   \right \Vert_{L_x^2} \right),
\end{align*}
where in the last inequality, we used \eqref{Algebra_xn}.
Note that the right-hand side in the last formula is well-defined thanks to the assumption $\left \Vert A_z(\nabla_x,t \nabla_x)     \om
   \right \Vert_{L_x^2}<RC^{-1}$.

\end{proof}

To prove Theorem \ref{Main_thm}, we also need a well-posedness result in the Gevrey setting for the nonlinear equation
\begin{align*}
    -\Delta u(x) + \beta u(x) + h( u)
  =q(x),
  \quad
  x \in \mathbb{R}^d, \quad \beta\ge 0,
\end{align*}
with $q(x)$ a given source term with small Gevrey norm.
For our purposes, it is convenient to reformulate the problem in terms of the modified density 
$\varrho:=(\beta-\Delta)u$ and study the equivalent equation
\begin{align}
\label{sec3:varro_phi}
    \varrho(x)=
    q(x)
    -h
    \left (
    (\beta- \Delta)^{-1}
    \varrho \right).
\end{align}
 We denote by $\mathcal{B}_r$ the closed ball of Gevrey functions $\om$ such that $\norm{A_z(\nabla_x,t\nabla_x) \om}_{L_x^2}\le r$.
\begin{prop}
\label{sec4:PoissonGevrey}
    Let $h(y)=\sum_{n \geq 2} a_n y^n$ be an analytic function with analyticity radius $R>0$ and consider a Gevrey source term 
    $q\in \mathcal{B}_{\varepsilon}$,
    for $\varepsilon, z>0$ and $\sigma>\frac{d}{2}$. 
    If $\varepsilon$ is
    sufficiently small, there exists $r(\varepsilon)$ such that
    $r(\varepsilon)=\mathcal{O}(\varepsilon)$ as $\varepsilon \to 0$ and so that
    \eqref{sec3:varro_phi} admits a unique Gevrey solution $\varrho \in \mathcal{B}_{r(\varepsilon)}$.
\end{prop}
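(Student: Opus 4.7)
The plan is to solve \eqref{sec3:varro_phi} via the Banach fixed point theorem applied to the map
$$
T(\omega) := q - h\bigl((\beta - \Delta)^{-1} \omega\bigr)
$$
on the closed ball $\mathcal{B}_{r(\varepsilon)}$, with $r(\varepsilon) = 2\varepsilon$. First note that $(\beta-\Delta)^{-1}$ is a Fourier multiplier with symbol $(\beta+|k|^2)^{-1}$, which commutes with $A_z(k,kt)$ and is uniformly bounded on the relevant space (either because $\beta>0$, or by restricting to mean zero functions when $\beta=0$). Hence there exists $C_0>0$ with
$$
\|A_z(\nabla_x,t\nabla_x)(\beta-\Delta)^{-1}\omega\|_{L^2_x} \le C_0 \|A_z(\nabla_x,t\nabla_x)\omega\|_{L^2_x}.
$$

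\emph{Self-map.} For $\omega\in \mathcal{B}_{r(\varepsilon)}$ with $\varepsilon$ small enough that $CC_0 r(\varepsilon) < R$, Lemma \ref{sec3:lemma_h(U)} gives
$$
\|A_z(\nabla_x,t\nabla_x) T(\omega)\|_{L^2_x}\le \varepsilon + \widetilde h\bigl(CC_0 r(\varepsilon)\bigr).
$$
Since $h(y)=\mathcal{O}(y^2)$, the same holds for $\widetilde h$, so the right-hand side is $\varepsilon + \mathcal{O}(\varepsilon^2)\le 2\varepsilon = r(\varepsilon)$ for $\varepsilon$ small, proving $T:\mathcal{B}_{r(\varepsilon)}\to \mathcal{B}_{r(\varepsilon)}$.

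\emph{Contraction.} Writing
$$
h(a)-h(b) = (a-b)\sum_{n\ge 2} a_n \sum_{j=0}^{n-1} a^{n-1-j} b^j,
$$
the Banach algebra property of the norm $\|A_z(\nabla_x,t\nabla_x)\cdot\|_{L^2_x}$ established within the proof of Lemma \ref{sec3:lemma_h(U)} (inequalities \eqref{Algebra_x2}--\eqref{Algebra_xn}, based on the weight bound \eqref{sec2:Lemma_weight_A}) yields
$$
\|A_z(\nabla_x,t\nabla_x)(h(a)-h(b))\|_{L^2_x} \le C\, \widetilde h'\!\bigl(C\max(\|A_z a\|_{L^2_x},\|A_z b\|_{L^2_x})\bigr)\, \|A_z(\nabla_x,t\nabla_x)(a-b)\|_{L^2_x}.
$$
Applying this with $a=(\beta-\Delta)^{-1}\omega_1$, $b=(\beta-\Delta)^{-1}\omega_2$ for $\omega_i\in\mathcal{B}_{r(\varepsilon)}$, and using the bound on $(\beta-\Delta)^{-1}$ above, gives a Lipschitz constant of order $\widetilde h'(CC_0 r(\varepsilon))$. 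Since $h(y)=\mathcal{O}(y^2)$ implies $\widetilde h'(y)=\mathcal{O}(y)$, this constant is $\mathcal{O}(\varepsilon)<1$ for $\varepsilon$ small. The Banach fixed point theorem then provides the unique $\varrho\in\mathcal{B}_{r(\varepsilon)}$ solving \eqref{sec3:varro_phi}, with $r(\varepsilon)=2\varepsilon=\mathcal{O}(\varepsilon)$.

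The main technical ingredient is the Lipschitz estimate on $h$ in Gevrey norm, which is not explicitly stated in Lemma \ref{sec3:lemma_h(U)} but is a direct consequence of the same Banach algebra argument used to prove it; everything else is a routine contraction-mapping setup, enabled crucially by the quadratic vanishing $h(y)=\mathcal{O}(y^2)$ near the origin.
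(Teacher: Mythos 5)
Your proposal is correct and follows essentially the same route as the paper: define the map $T(\omega)=q-h((\beta-\Delta)^{-1}\omega)$, use the self-map and contraction estimates on $\mathcal{B}_{r(\varepsilon)}$ via Lemma \ref{sec3:lemma_h(U)} and the Fourier-side bound $(\beta+|k|^2)^{-1}\leq 1$, then invoke Banach fixed point. Two minor presentational differences: the paper bounds the Lipschitz constant by $\sup_{|\xi|\le r}|h'(\xi)|$, which is a shorthand that really requires the Banach-algebra structure of the Gevrey norm (a pointwise mean-value estimate alone would not control the weighted $L^2$ norm); your explicit telescoping identity for $h(a)-h(b)$ combined with the algebra inequality \eqref{Algebra_x2} makes that step airtight and is arguably cleaner. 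You also flag the boundedness of $(\beta-\Delta)^{-1}$ when $\beta=0$ (resolved by the mean-zero restriction), a point the paper leaves implicit, and you pick the concrete radius $r(\varepsilon)=2\varepsilon$ where the paper parametrizes $r\in(r_-(\varepsilon),r_+(\varepsilon))$; neither difference affects the substance.
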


\begin{proof}
   For a given $q \in \mathcal{B}_\varepsilon$, we define the map
\begin{equation*}
\M_q: \mathcal{B}_r \to \mathcal{B}_r, \quad \omega \mapsto \M_q\left( \omega \right):=
q(x)
- h\left ( (\beta- \Delta)^{-1} \om \right).
\end{equation*}
This map is well-defined if $\varepsilon$ and $r$ are sufficiently small. Indeed,
\begin{align*}
     \norm{A_z(\nabla_x,t\nabla_x) \M_q ( \omega )}_{L_x^2} 
     & \leq \norm{A_z(\nabla_x,t\nabla_x) q}_{L_x^2} 
    + \norm{A_z(\nabla_x,t\nabla_x) h\left ( (\beta- \Delta)^{-1} \om \right)}_{L_x^2} \\
    & \leq \varepsilon
    + \tilde{h}\left( C\norm{A_z(\nabla_x,t\nabla_x)\left((\beta- \Delta)^{-1} \om\right)}_{L_x^2} \right),
\end{align*}
where we used that $q \in \mathcal{B}_\varepsilon$
and Lemma \ref{sec3:lemma_h(U)} to take $h$ out of the $L_x^2$ norm. 
For the second term, note that on the Fourier side
\begin{align}
    \label{sec4:FT_omega}
    \abs{\reallywidehat{\left((\beta- \Delta)^{-1} \om\right)}(k)} = \abs{\frac{1}{\beta+\abs{k}^2} \widehat{\om}(k)} \leq 
    \abs{\widehat{\om}(k)},
\end{align}
we get,
\begin{align*}
     \norm{A_z(\nabla_x,t\nabla_x) \M_q ( \omega )}_{L_x^2} 
     \le \varepsilon
    + \tilde{h}\left( C\norm{A_z(\nabla_x,t\nabla_x) \om}_{L_x^2} \right)
     \leq \varepsilon + Cr^2 ,
\end{align*}
where we used that $\om \in \mathcal{B}_r$, and $\tilde{h}(x) = \mathcal{O}(x^2)$ for $x$ small. It follows that,
\begin{align*}
     \norm{A_z(\nabla_x,t\nabla_x) \M_q ( \omega )}_{L_x^2} \leq r,
\end{align*}
provided $\varepsilon$ is sufficiently small
and $r \in (r_-(\varepsilon), r_+(\varepsilon))$, 
$r_{\pm}(\varepsilon):=(2C)^{-1}(1 \pm \sqrt{1-4C\varepsilon})$.

We now want to show that $\M_q$ is a contraction with respect to the previously defined Gevrey norm
so that, by a fixed point argument, we get the existence of a unique solution of \eqref{sec3:varro_phi} inside $\mathcal{B}_r$. 

Let $\om_1, \om_2 \in \mathcal{B}_r$,  then
\begin{align*}
    \norm{A_z(\nabla_x, t \nabla_x)\left[\M_q(\om_1)-\M_q(\om_2)\right]}_{L^2_x}
    &\leq \norm{A_z(\nabla_x, t \nabla_x) \left[
    h\left( (\beta- \Delta)^{-1} \om_1\right) 
    - h\left ( (\beta- \Delta)^{-1} \om_2 \right)
    \right]}_{L^2_x} \\
    & \leq \sup_{|\xi| \le r}
    \abs{h'(\xi)}
    \norm{
    A_z(\nabla_x, t \nabla_x)
    \left[
    (\beta- \Delta)^{-1}( \om_1 -  \om_2)
    \right]}_{L^2_x}.
\end{align*}
Then, using \eqref{sec4:FT_omega} and $h'(x) = \mathcal{O}(x)$ for $x$ small, we have
\begin{align*}
    \norm{A_z(\nabla_x, t \nabla_x)\left[\M_q(\om_1)-\M_q(\om_2)\right]}_{L^2_x}
    & \leq C r  \norm{
    A_z(\nabla_x, t \nabla_x)
    ( \om_1 -  \om_2)
    }_{L^2_x}.
\end{align*}
Therefore, for $r(\varepsilon) \in (r_-(\varepsilon), r_+(\varepsilon))$ sufficiently small, there exists a unique $\varrho \in \mathcal{B}_{r(\varepsilon)}$ such that
\begin{align*}
    \varrho = q(x)
- h\left ( (\beta- \Delta)^{-1} \varrho \right).
\end{align*}
\end{proof}

\begin{proof}[Proof of Theorem \ref{Main_thm}]
Let 
$X:=C([0,+\infty);\mathcal{G}^{\bar\lambda})$ be the space of continuous-in-time Gevrey functions in the norm given by \eqref{sec1:normg} with Gevrey parameter $\bar\lambda< \lambda(0)$.

We consider the closed subset $\mathcal{C}_{\varepsilon}\subset X $: the set of Gevrey functions $\phi$ in $X$ such that
$$
N[\phi]:=N_1[\phi]+N_2[q_\phi]\le\varepsilon, \quad 
N_1[\phi]:=\sup_{t \in [0,+\infty)}\left\Vert \jp{v}^M B_t(\nabla_x,\nabla_v) 
   \phi_t
   \right\Vert_{ L_{x,v}^2},
N_2[q_\phi]:=
   \left \Vert \langle
        t \rangle^b A_t(\nabla_x,t \nabla_x) q_\phi
        \right \Vert_{L_t^2 L_x^2},
$$
where
$q_\phi(t,x):=\int \phi(t, x-tv,v) dv$. 
Note that thanks to the Sobolev embedding,
\begin{equation}
\label{sec4:densityest}
    \norm{A_t(\nabla_x,t \nabla_x)\int \phi(t,x-tv,v) dv }_{L_x^2} \leq N_1[\phi] \le \varepsilon.
\end{equation}

We introduce the map 
\begin{equation*}
\mathcal{F}: \mathcal{C}_\varepsilon \to \mathcal{C}_\varepsilon, \quad \{\phi_t\}_{t\in [0,+\infty)} \mapsto \mathcal{F}\left( \{\phi_t\}\right):= \{\psi_t\}_{t \in [0,+\infty)},
\end{equation*}
where $\psi$ is the unique solution to the linear problem
\begin{eqnarray}
\label{sec4:linear_system}
  \left \{
  \begin{array}{l}
  \partial_t \psi(t,x,v) +
  E_\phi(t,x+vt) \cdot(\nabla_v - t \nabla_x)\psi(t,x,v)=
    - E_{\psi}(t,x+vt) \cdot \nabla_v \mu(v),\\
  E_\phi(t,x) = - \nabla u_\phi(t,x), 
  \qquad -\Delta u_\phi(t,x) + \beta u_\phi(t,x) + h( u_\phi)(t,x)
  =q_\phi(t,x),\\
  E_\psi(t,x)=-\nabla u_\psi(t,x), \qquad 
  -\Delta u_\psi(t,x) + \beta u_\psi(t,x) + h( u_\phi)(t,x)
  =q_\psi(t,x),\\
  \lim_{t \to \infty} \|\psi_t - g_\infty\|_{L^\infty(\mathbb{T}^d \times \mathbb{R}^d)}=0,
  \end{array}
  \right.
\end{eqnarray}
where
$g_\infty$ is a Gevrey function with Gevrey parameter $\lambda>\lambda(t)$ such that
$\int_{\mathbb{T}^d\times \mathbb{R}^d} g_\infty(x,v) dx dv=0$
and satisfying \eqref{norm_g_infty}. 

The map is well-defined, provided that $\varepsilon$ is sufficiently small, thanks to Proposition \ref{sec4:PoissonGevrey} and the a priori estimates of Proposition \ref{sec2:prop_estimate_S} and Proposition \ref{sec3:prop1}. Indeed, starting from the a prior estimate \eqref{sec3:prop1_estimate} for the linear system \eqref{sec4:linear_system} we get
\begin{align}
\label{sec4:a prioriestimate}
     & \frac{1}{2}\frac{d}{dt} \sum_{0 \leq \abs{j} \leq M}\norm{B_t(\nabla_x, \nabla_v) (v^j \psi_t)}_{L^2_{x,v}}^2 
        + \left( \jp{t}^{-2}\norm{ A_t(\nabla_x, t\nabla_x)\varrho_\phi}_{L^2_x} 
        - \dot{\lambda}(t) \right) \norm{\jp{\nabla_x , \nabla_v}^{\gamma/2} B_t(\nabla_x, \nabla_v) (v^j \psi_t)}_{L^2_{x,v}} \nonumber \\
        & \quad \geq - \jp{t}^2 \norm{A_t (\nabla_x, t\nabla_x) \varrho_\phi}_{L^2_x}\sum_{0 \leq \abs{j} \leq M}\norm{B_t(\nabla_x, \nabla_v) (v^j \psi_t)}_{L^2_{x,v}}^2 \nonumber \\
        & \qquad - \jp{t}\norm{A_t (\nabla_x, t\nabla_x)  \varrho_\psi}_{L^2_x}\sum_{0 \leq \abs{j} \leq M}\norm{B_t(\nabla_x, \nabla_v) (v^j \psi_t)}_{L^2_{x,v}},
\end{align}
 where $\varrho_\phi$ is solution of the equation
    $\varrho_\phi=
    q_\phi
    -h
    \left (
    (\beta- \Delta)^{-1}
    \varrho_\phi \right)$ and $\varrho_\psi:=q_\psi-
    h
    \left (
    (\beta- \Delta)^{-1}
    \varrho_\phi \right)$.
    Note that, by Proposition \ref{sec4:PoissonGevrey} and thanks to \eqref{sec4:densityest}, 
    $\varrho_\phi$ is well-defined and
    that
    $$
    \norm{A_t (\nabla_x, t\nabla_x) \varrho_\phi}_{L^2_x} \le C\varepsilon.
    $$
Moreover, note that in \eqref{sec4:a prioriestimate} the term $E_{\psi}(t,x+vt) \cdot \nabla_v \mu(v)$ corresponds to the linear term in the a priori estimate \eqref{sec3:prop1_estimate} and gives the term with the $\jp{t}$ in \eqref{sec4:a prioriestimate}. While $E_\phi(t,x+vt) \cdot(\nabla_v - t \nabla_x)\psi(t,x,v)$ corresponds to the nonlinear term in the a priori estimate \eqref{sec3:prop1_estimate} and gives the terms with $\jp{t}^{-2}$ and $\jp{t}^2$ in \eqref{sec4:a prioriestimate}.
Then, by using the equivalence of norm \eqref{sec3:equi_norm}, and the fact that
\begin{align*}
    \left(\jp{t}^{-2} \norm{A_t (\nabla_x, t\nabla_x) \varrho_\phi}_{L^2_x}- \dot\lambda(t)\right) <0,
\end{align*}
for $\delta \ll 1$, we obtain
\begin{align*}
    \frac{d}{dt} \norm{\jp{v}^M B_t(\nabla_x, \nabla_v) \psi_t}_{L^2_{x,v}} & \geq - \jp{t}^2 \norm{A_t (\nabla_x, t\nabla_x) \varrho_\phi}_{L^2_x} \norm{\jp{v}^M B_t(\nabla_x, \nabla_v) v^j \psi_t}_{L^2_{x,v}} \nonumber \\
        & \qquad - \jp{t}\norm{A_t (\nabla_x, t\nabla_x)  \varrho_\psi}_{L^2_x}.
\end{align*}
Integrating from $t$ to $+\infty$ and rearranging terms, we get
    \begin{align}
    \label{sec4:apriori}
     & \norm{\jp{v}^M B_t(\nabla_x, \nabla_v)  \psi_t}_{L^2_{x,v}}
    \le
    \norm{\jp{v}^M B_{\lambda_\infty}(\nabla_x, \nabla_v)  g_\infty}_{L^2_{x,v}} \nonumber\\
    &+C\int_t^{+\infty}\jp{s}^2 \norm{A_s (\nabla_x, s\nabla_x) \varrho_\phi}_{L^2_x}\norm{\jp{v}^M B_s(\nabla_x, \nabla_v)  \psi_s}_{L^2_{x,v}}ds
    \nonumber \\
    &+ C\int_t^{+\infty}\jp{s}\norm{A_s (\nabla_x, s\nabla_x)  \varrho_\psi}_{L^2_x}
    ds.
    \end{align}
Next, by using Cauchy-Schwarz in time with $b>6$, we have 
    \begin{align}
    \label{sec4:stima1}
        N_1[\psi]\le C \|g_\infty\|_{\mathcal{G}}+ CN_2[\varrho_\phi]N_1[\psi]+ CN_2[\varrho_\psi],
    \end{align}
    where 
    $
    \|g_\infty\|_{\mathcal{G}}:=
    \left \Vert
   \langle v \rangle^M e^{\lambda \langle \nabla_x, \nabla_v  \rangle^\gamma} 
   \langle \nabla_x,\nabla_v \rangle^{\sigma+b} g_\infty
   \right \Vert_{L_{x,v}^2}.$
    By Proposition \ref{sec2:prop_estimate_S}, we obtain
    \begin{align}
    \label{sec4:stima2}
        N_2[\varrho_\psi]
    \le
   \|g_\infty\|_{\mathcal{G}}+
   CN_2[h\left(
    (\beta-\Delta)^{-1}\varrho_\phi
    \right)]
    + CN_2[\varrho_\phi]N_1[\psi].
    \end{align}
Note, thanks to Lemma \ref{sec3:lemma_h(U)},
\begin{align*}
    N_2[h\left( (\beta-\Delta)^{-1}\varrho_\phi \right)]
    & = \norm{\jp{t}^b \norm{A_t(\nabla_x,t \nabla_x) 
    h\left( (\beta-\Delta)^{-1}\varrho_\phi
    \right)}_{L_x^2}}_{L_t^2} \\
    & \leq \norm{\jp{t}^b \Tilde{h} \left( \norm{A_t(\nabla_x,t \nabla_x) 
    \left( (\beta-\Delta)^{-1}\varrho_\phi
    \right)}_{L_x^2} \right)}_{L_t^2} \\
    & \leq \norm{\jp{t}^b \Tilde{h} \left( \norm{A_t(\nabla_x,t \nabla_x) 
    \varrho_\phi}_{L_x^2} \right)}_{L_t^2}.
\end{align*}
 Then, by linearization of $\tilde h(x)$, we get
\begin{align}
\label{sec4:N_2_h_varro_phi}
    N_2[h\left( (\beta-\Delta)^{-1}\varrho_\phi \right)] 
     \leq C\varepsilon \norm{\jp{t}^b  \norm{A_t(\nabla_x,t \nabla_x) 
    \varrho_\phi}_{L_x^2} }_{L_t^2}
    = C\varepsilon \norm{\jp{t}^b A_t(\nabla_x,t \nabla_x) 
    \varrho_\phi}_{L_t^2L_x^2} 
    \leq C \varepsilon^2,
\end{align}
since by assumption $\phi \in \mathcal{C}_\varepsilon$. For $N_2[\varrho_\phi]$ we have
\begin{align}
\label{sec4:N_2_varro_phi}
    N_2[\varrho_\phi] \leq N_2[q_\phi] + N_2[h\left( (\beta-\Delta)^{-1}\varrho_\phi \right)] \leq \varepsilon + C \varepsilon^2,
\end{align}
where we used \eqref{sec4:N_2_h_varro_phi} and $\phi \in \mathcal{C}_\varepsilon$. Therefore, by assumption \eqref{norm_g_infty} and inequalities \eqref{sec4:N_2_h_varro_phi} and \eqref{sec4:N_2_varro_phi} in \eqref{sec4:stima2}, we get
\begin{align}
\label{sec4:N_2_varro_psi}
    N_2[\varrho_\psi]
    \le
   C \varepsilon 
    + C\varepsilon N_1[\psi].
\end{align}
Finally, using  \eqref{norm_g_infty}, \eqref{sec4:N_2_varro_phi} and \eqref{sec4:N_2_varro_psi} in \eqref{sec4:stima1}, we obtain
\begin{align}
\label{sec4:N_1_psi}
    N_1[\psi] \leq C \varepsilon + C\varepsilon N_1[\psi].
\end{align}
Collecting the inequalities in \eqref{sec4:N_2_varro_psi} and \eqref{sec4:N_1_psi}, we get that $\psi \in \mathcal{C}_\varepsilon$, provided that $\varepsilon$ is sufficiently small.

To construct solutions to the nonlinear problem with asymptotic state $g_\infty$ , we want to prove that $\mathcal{F}$ is
contractive in the
norm 
$\mathcal{N}[\cdot]$ defined as $N[\cdot]$ but with a different $\Tilde{\lambda}(t)=\Tilde{\lambda}_\infty - C\jp{t}^{-\delta}$, $\Tilde{\lambda}_\infty<\lambda_\infty$.
Note that we have the equation
\begin{align*}
    \partial_t 
    \left(\psi^{(2)}- \psi^{(1)}\right)&
    + \left(
    (E_{\phi^{(2)}}- E_{\phi^{(1)}})(x+vt)\right)\cdot (\nabla_v -t \nabla_x )\psi^{(2)}\\
    &+ E_{\phi^{(1)}}(t,x+vt)\cdot 
    (\nabla_v -t \nabla_x)\left( \psi^{(2)}- \psi^{(1)}
    \right) \nonumber
    =-(E_{\psi^{(2)}}-E_{\psi^{(1)}})\cdot \nabla_v \mu,
\end{align*}
and that
$$\varrho_{\phi^{(2)}}-\varrho_{\phi^{(1)}}=
    \int \left[\phi^{(2)}(t,x-tv,v)
    - \phi^{(1)}(t, x-tv,v) 
    \]dv
    -h
    \left (
    (\beta- \Delta)^{-1}
    \varrho_{\phi^{(2)}}
    \right)
    +h
    \left (
    (\beta- \Delta)^{-1}
    \varrho_{\phi^{(1)}}
    \right).$$

By the a priori estimates in Proposition \ref{sec3:prop1}, we get
    \begin{align*}
        \mathcal{N}_1[\psi^{(2)}-\psi^{(1)}]\le
        C\mathcal{N}_2[\varrho_{\phi^{(2)}}-\varrho_{\phi^{(1)}}]
        N_1[\psi^{(2)}]
        + CN_2[\varrho_{\phi^{(1)}}]
        \mathcal{N}_1[\psi^{(2)}-\psi^{(1)}]+ 
        C\mathcal{N}_2[\varrho_{\psi^{(2)}}-\varrho_{\psi^{(1)}}],
    \end{align*}
   where $\mathcal{N}_1[\cdot]$ and $\mathcal{N}_2[\cdot]$ are defined as $N_1[\cdot]$ and $N_2[\cdot]$ but with $\Tilde{\lambda}(t)$. 
   Since, by Proposition \ref{sec2:prop_estimate_S}, 
    \begin{align*}
        \mathcal{N}_2[\varrho_{\psi^{(2)}}-\varrho_{\psi^{(1)}}]
    &\leq
   C\mathcal{N}_2[h\left(
    (\beta-\Delta)^{-1}\varrho_{\phi^{(2)}}
    \right)-
    h\left(
    (\beta-\Delta)^{-1}\varrho_{\phi^{(1)}}
    \right)
    ]
    +C\mathcal{N}_2[\varrho_{\phi^{(1)}}-\varrho_{\phi^{(2)}}]
      N_1[\psi^{(2)}] \nonumber \\
      &
    + CN_2[\varrho_{\phi^{(1)}}]
      \mathcal{N}_1[\psi^{(2)}-\psi^{(1)}],
    \end{align*}
    using that 
    $
     N_1[\psi^{(2)}]+ N_2[\varrho_{\phi^{(1)}}]\le C \varepsilon,
    $
    we conclude that
    $$
    \mathcal{N}[\psi^{(2)}-\psi^{(1)}] \le C \varepsilon \mathcal{N}[\phi^{(2)}-\phi^{(1)}],
    $$
    proving the contraction property, for small $\varepsilon$.
\end{proof}

\bibliographystyle{plain}
\bibliography{biblio}

\end{document}